\theoremstyle{plain}
\newtheorem{*theo}{*Theorem}
\newtheorem{theo}{Theorem}
\newtheorem{coro}{Corollary}
\newtheorem{lemm}{Lemma}
\newtheorem{rema}{Remark}
\newtheorem{exam}{Example}
\newtheorem{defi}{Definition}
\renewcommand{\i}{{\rm i}}
\newcommand\R{{\mathbb{R}}}
\newcommand\N{{\mathbb{N}}}
\newcommand\C{{\mathbb{C}}}
\newcommand\Z{{\mathbb{Z}}}
\newcommand{\F}{\mathcal{F}}
\newcommand{\x}{\mathbf{x}}
\renewcommand{\k}{\mathbf{k}}
\newcommand{\y}{\mathbf{y}}
\newcommand\e{{\mathbf{e}}}
\newcommand\vd{{\mathbf{d}}}
\newcommand\vO{{\mathbf{0}}}
\renewcommand{\d}{{\rm d}}
\newcommand{\supp}{\mbox{supp}}
\begin{document}

\title{Causal diffusion and its backwards diffusion problem}

\author{ Richard Kowar\\
Department of Mathematics, University of Innsbruck, \\
Technikerstrasse 21a/2, A-6020,Innsbruck, Austria
}

\maketitle

\begin{abstract}
In this article we consider the backwards diffusion problem for a \emph{causal} diffusion 
model developed in~\cite{Kow11}. Here causality means that the speed of propagation of the 
concentration is finite. 
For the investigation of this inverse problem, we derive an 
analytic representation of the Green function of the causal diffusion model in the 
$\mathbf{k}-t-$domain  (wave vector-time domain). 
We perform a theoretical and numerical comparison between standard (and noncausal) diffusion 
with our diffusion model in the  $\mathbf{k}-t-$domain and in the $\mathbf{x}-t-$domain. 
Moreover, we prove that the backwards diffusion problem of the causal diffusion model is ill-posed, 
but not exponentially ill-posed. 
In contrast to the classical backwards diffusion problem, the forward operator 
of the causal direct problem is not compact if the space dimension is $1$. 
The paper is concluded with numerical simulations of the backwards diffusion problem 
via the Landweber method. 
\end{abstract}

\section{Introduction}

The standard model of the backwards diffusion problem is a case example of an \emph{exponentially 
ill-posed} problem and it is related to several problems in tomography and image processing. 
For example, such problems have been studied in the  
articles~\cite{EngRun95,ElaIsa97,Do98,IsaKin00,MarSch01,GrKlLu03,Ahm07,HeHuMc08,WeChSuLi10,Add11}  
and books~\cite{Isa90,EngHanNeu96,Kir96,Isa98,Wei98a,NatWue01,Lo01,GuiMorRya04,Sa06,ScGrGrHaLe09,
ScGrGrHaLe09} to name but a few. 

This article starts over the backwards diffusion problem by replacing the \emph{standard} 
diffusion equation by a \emph{causal} diffusion model. Recently, such 
a model has been developed and studied in~\cite{Kow11}. 
By causality we understand that a characteristic 
feature of a process like an \emph{interface} or a \emph{front} must propagate 
with a finite speed $c$. This means that if a ``point concentration'' is added to a solution in point 
$\x=\vO$, then the concentration is zero outside the ball $\overline{B_{c\,T}(\vO)}$ after the 
time period $T$. Here it is not relevant whether this interface or front is 
visible. 
Although a causal behavior is naturally demanded for problems involving hyperbolic 
equations, it is usually disregarded for problems involving parabolic equations. 
It seem to the author that the modeling of causal equations is quite difficult 
and unfortunately it is considered as insignificant.  
Indeed, if a direct problem is smoothing or damping, then after a sufficiently 
long time period it does not matter if the exact or the perturbed model is used. 
However, the situation is quite different for the respective inverse and \emph{ill-posed} problem 
for which data and \emph{modeling errors} have a \emph{strong impact on the solution}. 
Hence, although our causal diffusion model yields similar numerical values as the standard 
diffusion model (compare Fig.~\ref{fig:ex01b} and Fig.~\ref{fig:ex01c}), it seems evident that causal 
diffusion is of practical interest for inverse problems related to diffusion. 

Apart from this fact, the modeling and investigation of causal mathematical equations 
is interesting from the pure mathematical point of view.  

The goal of this paper is to investigate to what extent a \emph{causal} diffusion model influences 
the respective backwards diffusion problem. We show that this inverse problem is ill-posed, 
but not exponentially ill-posed. 
For this purpose we derive basic properties of the causal diffusion model (cf. 
Section~\ref{sec-cdiff} and the appendix) which  can be summaries as follows.  
If $v$ denotes the distribution of a substance diffusing with constant speed 
$c$ and initial concentration $u$, then we have the following analytic 
representation\footnote{We will see that causal diffusion is determined 
by the speed of diffusion $c$, a time period $\tau$ and the space dimension $N$. Here we 
assume $c=1$ and $\tau=1$.} 
\begin{equation}\label{vprop}
\begin{aligned}
   \hat v(\cdot,m+s) 
         =  (2\,\pi)^{-N/2} \,\Upsilon_N(|\cdot|)^m\, 
                            \Upsilon_N(|\cdot|\,s)\, \hat u  
\end{aligned}
\end{equation}
for $m\in\N_0$ and $s\in (0,1]$, where $\hat v$ denotes the Fourier transform of $v$ with 
respect to $\x\in\R^N$ and $\Upsilon_N$ is the solution of 
\begin{equation*}
\begin{aligned}
    \Upsilon_N''(t) + \frac{(N-1)}{t}\,\Upsilon_N'(t) + \Upsilon_N(t) = 0 
   \qquad\quad t>0\,, 
\end{aligned}
\end{equation*}
with initial conditions $\Upsilon_N(0+) = 1$ and $\Upsilon_N'(0+)$. 
For example, for $N=1,\,2,\,3$ we have 
\begin{equation}\label{Upsilon123}
\begin{aligned}
    \Upsilon_1(t) = \cos(t),\qquad   
    \Upsilon_2(t) = \mbox{J}_0(t)  \qquad\mbox{and}\qquad 
    \Upsilon_3(t) = \mbox{sinc}(t)\,,
\end{aligned}
\end{equation} 
where $\mbox{J}_0$ denotes the \emph{Bessel function} of first kind and order zero (cf. appendix).  
We note that from property~(\ref{vprop}) and $\supp(\check \Upsilon (\cdot,m+s)) = B_{m+s}(\vO)$ 
causality can be infered (cf.~(\ref{causaGcond2})). Here $\check \Upsilon (\cdot,s)$ denotes the 
inverse Fourier transform of $\Upsilon(|\cdot|\,s)$. 
Several important properties of the causal diffusion model are derived from these properties 
which are in strong contrast to the standard diffusion model (cf. Section~\ref{sec-cdiff} 
and~\ref{sec-comp}). 

The respective backwards diffusion problem corresponds to the solution of the Fredholm integral 
equation of the first kind
\begin{equation*}\label{invprob}
\begin{aligned}
   F_T(u) = w  \qquad\quad \mbox{for given data $w$,}
\end{aligned}
\end{equation*} 
where the forward operator is defined by $F_T(u):=v(\cdot,T)$ with $v$ as in~(\ref{vprop}) and 
$T>0$ denotes the data acquisition time. We show (for appropriate spaces) that the 
forward operator is injective and that it is compact (cf. Section~\ref{sec-invprobprop})
\begin{itemize}
\item [1)] if $N=2$ and $T>2\,\tau$ and  

\item [2)] if $N\geq 3$ and $T>\tau$. 

\end{itemize}
Here $N$ denotes the space dimension and $t$ the time. 
We note that the envelope of the Fourier transform of $F_T(u)$ does not 
decrease exponentially fast. In this sense the inverse problem is \emph{not} exponentially 
ill-posed. 
Furthermore, numerical simulations of the backwards diffusion problem are performed (cf. 
Section~\ref{sec-Sim}), which confirm our theoretical results. \\

The paper is organized as follows: In Section~\ref{sec-cdiff} we present our causal model 
of diffusion and derive those properties of diffusion that are needed for this paper. 
For the convenience of the reader we put the technical part that is relevant for 
Sections~\ref{sec-cdiff} and~\ref{sec-comp} in the appendix. 
Comparisons between the standard diffusion model and our causal diffusion model are performed 
in Section~\ref{sec-comp}. 
The theoretical and numerical aspects of the backwards diffusion problem are investigated in 
Sections~\ref{sec-invprobprop} and~\ref{sec-Sim}. Numerical simulations of the inverse problem 
via the Landweber method are presented at the end of Section~\ref{sec-Sim}.

\section{Causal diffusion and its properties}
\label{sec-cdiff}

We now define \emph{causal diffusion} for the case of a \emph{constant} speed 
$c\in (0,\infty)$. For the more general case we refer to~\cite{Kow11}.

\begin{defi}\label{defCD}
Let $c,\tau\in (0,\infty)$, $\d \sigma(\x')$ denote the Lebesgue surface measure on 
$\R^N$ and $|S_{R}(\vO)|$ denote the surface area of the sphere $S_R(\vO)$. 
Diffusion with a constant speed $c$ is defined by 
\begin{equation}\label{defcausdiff10}
\begin{aligned}
   v_{c,\tau}(\x,t) 
      = \int_{S_{R(t)}(\x)} \frac{ v_{c,\tau}\left(\x',\tau_{n(t)}\right)}{|S_{R(t)}(\vO)|} 
                    \,\d \sigma(\x')\,
           \quad\mbox{with}\quad v_{c,\tau}|_{t=0}=u\,,
\end{aligned}
\end{equation}
where $\tau_{n(t)}:=n(t)\,\tau$ and 
$$
  n(t)\in\N_0 \quad \mbox{such that } \quad t\in (n(t)\,\tau,(n(t)+1)\,\tau]\,,
$$
and $R(t):=c\,(t-n(t)\,\tau)$. If $u(\x)=\delta(\x)$, then we call $G_{c,\tau}:=v_{c,\tau}$ 
the Green function of diffusion.  Here $\delta(\x)$ denotes the 
\emph{delta distribution}\footnote{Our notation of the delta distribution is specified 
at the beginning of the Appendix.} on $\R^N$. 
\end{defi}

Let $c>0$, $\tau>0$ and $v_{c,\tau}(\x,t)$ be defined as in Definition~\ref{defCD}. 
It follows from induction (cf. Lemma~2 in~\cite{Kow11}) that the \emph{forward operator}  
\begin{equation}\label{FTv1}
\begin{aligned}
   F_T :L^1(\R^N)\to L^1(\R^N),\, u\mapsto v_{c,\tau}(\x,T)   
    \qquad\quad \mbox{($T>0$ fixed)}
\end{aligned}
\end{equation}
is well-defined and that $\|u\|_{L^1} = \|v_{c,\tau}\|_{L^1}$, 
i.e. causal diffusion satisfies the \emph{conservation law of mass}. 
In order to analyse the properties of the forward operator in 
Section~\ref{sec-invprobprop}, we derive the Fourier representation of the Green 
function $G_{c,\tau}$ of causal diffusion. 
In this paper $\hat f(\k)$ and $\F\{f\}(\k)$ denote the Fourier transform of 
$\x\in\R^N\mapsto f(\x)$. Our definition of the Fourier transform and the respective 
Convolution Theorem are formulated at the beginning of the Appendix.

\begin{rema}
The reader may object that the above definition of causal diffusion is not derived from 
first principles and that it  does not look very similar to standard diffusion.  
Because of property $\|u\|_{L^1} = \|v_{c,\tau}\|_{L^1}$, it follows that $v_{c,\tau}$ satisfies a 
continuity equation and, as shown in the introduction of~\cite{Kow11}, $v_{c,\tau}$ satisfies 
approximately Fick's law. Hence it is reasonable that the causal diffusion model follows from 
first principles under the side condition of causality. The derivation of our model 
from microscopic equations is intended to be carried out in the future.

Moreover, because standard diffusion satisfies a strongly continuous semigroup property 
with respect to time and, as shown in Theorem~\ref{theo:02} below, our causal diffusion model 
satisfies a discrete semigroup property with respect to time, it is evident that both models 
yield similar numerical results under appropriate conditions (compare Fig.~\ref{fig:ex01b} 
and Fig.~\ref{fig:ex01c}). 
\end{rema}

\begin{theo}\label{theo:01}
Let $G_{c,\tau}$ and $\Upsilon_N$ be defined as in Definition~\ref{defCD} 
and~(\ref{defUpsilon}) (cf. Appendix), respectively. Then 
\begin{equation}\label{defGcks}
\begin{aligned}
   \hat G_{c,\tau}(\k,s) 
                  = \frac{\Upsilon_N(|\k|\,c\,s)}{(2\,\pi)^{N/2}}
        \qquad  \mbox{for}  \qquad \k\in\R^N,\,s\in (0,\tau] 
\end{aligned}
\end{equation} 
and $\mu_s(A) :=  \int_A G_{c,\tau}(\x,s)\,\d\x$ defines a positive measure on the 
Borel sets. Moreover,~(\ref{Upsilon123}) and~(\ref{I}) (cf. Appendix) hold.  
\end{theo}

\begin{proof}
We note that $s\in (0,\tau]$ implies $n(s)=0$ and $R(s)=c\,\tau$. Moreover, we have 
\begin{equation*}
\begin{aligned}
  &\int_{S_R(\x)}  f(\x')\,\d \,\sigma(\x') \equiv \int_{S_1(\vO)} f(\x+R\,\y)\,R^{N-1}\, 
                  \d\, \sigma(\y) \,,\\
  &|S_R(\vO)|=|S_1(\vO)|\,R^{N-1}\,.
\end{aligned}
\end{equation*}
From these facts and Definition~\ref{defCD} with $u(\x)=\delta(\x)$, it follows 
that 
\begin{equation*}
\begin{aligned}
   G_{c,\tau}(\x,s)
      = \int_{S_{R(s)}(\x)} \frac{ \delta(\x')}{|S_{R(s)}(\vO)|} \,\d \sigma(\x')
      = \int_{S_1(\vO)} \frac{ \delta(\x+R(s)\,\y)}{|S_1(\vO)|} \,\d \sigma(\y)\,. 
\end{aligned}
\end{equation*}
$\mu_s(A)$ is a positive measure, since $G_{c,\tau}(\x,s)$ is a positive distribution. 

To determine the Fourier transform of $G_{c,\tau}$, we use the following series 
representation derived in Lemma~\ref{lemm:hatGc} (cf. Appendix): 
\begin{equation*}
\begin{aligned}
    (2\,\pi)^{N/2}\, \F\left\{
         \int_{S_1(\vO)} \frac{\delta(\cdot+c\,s\,\y)}{|S_1(\vO)|}\,\d \sigma(\y)
          \right\}(\k)
            = \sum_{j=0}^\infty (-1)^j\cdot a_{2\,j}\cdot (|\k|\,c\,s)^{2\,j} 
\end{aligned}
\end{equation*}
for $s\in (0,\tau]$ with $a_0=1$ and 
\begin{equation*}
\begin{aligned}
    a_{2\,j} = \frac{1}{(2\,j)!}\,\frac{1\cdot 3\cdot 5 \cdots (2\,j-1)}
                                      {N\cdot (N+2)\cdot(N+4) \cdots (N+2\,j-2)}
\qquad \mbox{for $j\in\N$\,.}
\end{aligned}
\end{equation*} 
In Theorem~\ref{theo:Upsilon2} it is shown that $\Upsilon_1(t)=\cos(t)$, 
$\Upsilon_2(t)=\mbox{J}_0(|\k|\,c\,s)$ and~(\ref{I}) holds. $\Upsilon_3(t)= \mbox{sinc}(t)$ 
can be concluded from Theorem~\ref{theo:Upsilon2}, too, or alternatively from  
\begin{equation*}
\begin{aligned}
   a_{2\,j}
        = \frac{1}{(2\,j)!}\,\frac{1\,\cdot 3\cdot 5 \cdots (2\,j-1)}
                                  {3\cdot 5\cdot 7\cdots (2\,j+1)} 
        = \frac{1}{(2\,j+1)!}\,.
\end{aligned}
\end{equation*}
This concludes the proof. 
\end{proof}

The following theorem together with Theorem~\ref{theo:01} provides us with a 
complete description of causal diffusion and a mean to compare causal and 
standard diffusion in the $\k-t-$space (cf. Subsection~\ref{subsec-Comp}).

\begin{theo}\label{theo:02}
Let $v_{c,\tau}$, $u$ and $G_{c,\tau}$ be defined as in Definition~\ref{defCD}. 
Moreover, let $S_\tau$ denote the space convolution operator with kernel $G(\x,\tau)$, 
i.e. $S_\tau\,u:=G(\cdot,\tau)*_\x u$ for every $u\in L^1(\R^n)$. Then we have 
\begin{equation*}
\begin{aligned}
   v_{c,\tau}(\cdot,t) = S_\tau^m \,S_s\,u  
   \qquad \mbox{for}\qquad t = \tau_m+s,\,s\in (0,\tau]\,, 
\end{aligned}
\end{equation*}
which is equivalent to 
\begin{equation}\label{vUpsilon}
\begin{aligned}
   \hat v_{c,\tau}(\cdot,t) 
         =  (2\,\pi)^{-N/2} \,\Upsilon_N(|\cdot|\,c\,\tau)^m\, 
                            \Upsilon_N(|\cdot|\,c\,s)\, \hat u  \,.
\end{aligned}
\end{equation}
\end{theo}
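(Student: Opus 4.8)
The plan is to unwind the recursion in Definition~\ref{defCD} into a finite product of spherical-mean operators, to recognise each such operator as a convolution against the Green kernel of Theorem~\ref{theo:01}, and finally to read off~(\ref{vUpsilon}) by Fourier transforming. First I would fix $t=\tau_m+s$ with $s\in(0,\tau]$ and note that then $n(t)=m$, $\tau_{n(t)}=\tau_m$ and $R(t)=c\,s$, so that Definition~\ref{defCD} collapses to
\begin{equation*}
\begin{aligned}
   v_{c,\tau}(\x,\tau_m+s)
      = \int_{S_{c\,s}(\x)}\frac{v_{c,\tau}(\x',\tau_m)}{|S_{c\,s}(\vO)|}\,\d\sigma(\x')\,,
\end{aligned}
\end{equation*}
i.e.\ $v_{c,\tau}(\cdot,\tau_m+s)$ is the spherical mean of radius $c\,s$ of the profile $v_{c,\tau}(\cdot,\tau_m)$.

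The first key step is to identify this spherical mean with the convolution operator $S_s$. Using the representation $G_{c,\tau}(\x,s)=\int_{S_1(\vO)}|S_1(\vO)|^{-1}\,\delta(\x+c\,s\,\y)\,\d\sigma(\y)$ established in the proof of Theorem~\ref{theo:01} for $s\in(0,\tau]$, a direct evaluation of $G_{c,\tau}(\cdot,s)*_\x f$ collapses the delta distribution onto $S_{c\,s}(\x)$ and returns precisely the spherical mean above; hence $v_{c,\tau}(\cdot,\tau_m+s)=S_s\,v_{c,\tau}(\cdot,\tau_m)$. Next I would prove $v_{c,\tau}(\cdot,\tau_m)=S_\tau^m\,u$ by induction on $m$. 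The base case $m=0$ is the initial condition $v_{c,\tau}(\cdot,0)=u$. For the step I use that $\tau_m=\tau_{m-1}+\tau$ satisfies $n(\tau_m)=m-1$ and $R(\tau_m)=c\,\tau$, so the same identification (now with radius $c\,\tau$) gives $v_{c,\tau}(\cdot,\tau_m)=S_\tau\,v_{c,\tau}(\cdot,\tau_{m-1})=S_\tau\,S_\tau^{m-1}u=S_\tau^m u$. Combining the two displays and using that convolution operators commute, $S_s\,S_\tau^m=S_\tau^m\,S_s$, yields the asserted identity $v_{c,\tau}(\cdot,t)=S_\tau^m\,S_s\,u$.

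It remains to see that this is equivalent to~(\ref{vUpsilon}). Applying the Fourier transform together with the Convolution Theorem from the Appendix turns each factor $S_\tau$, $S_s$ into multiplication by $\hat G_{c,\tau}(\k,\tau)$, $\hat G_{c,\tau}(\k,s)$, which by Theorem~\ref{theo:01} are $(2\pi)^{-N/2}\Upsilon_N(|\k|\,c\,\tau)$ and $(2\pi)^{-N/2}\Upsilon_N(|\k|\,c\,s)$; iterating and keeping track of the normalization fixed in the Appendix produces $\hat v_{c,\tau}(\cdot,t)=(2\pi)^{-N/2}\Upsilon_N(|\cdot|\,c\,\tau)^m\Upsilon_N(|\cdot|\,c\,s)\,\hat u$, as claimed. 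I expect the main obstacle to be the rigorous justification of the spherical-mean-as-convolution identity and of the iterated composition $S_\tau^m S_s$: since $G_{c,\tau}(\cdot,s)$ is not an $L^1$-kernel but the singular (surface) measure of Theorem~\ref{theo:01}, one must make sense of $G_{c,\tau}(\cdot,s)*_\x u$ as a convolution of a finite measure with an $L^1$ function (so that it stays in $L^1$ and the Convolution Theorem applies), after which the remaining work is purely the bookkeeping of the recursion and of the endpoint $s=\tau$.
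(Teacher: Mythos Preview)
Your proposal is correct and follows essentially the same route as the paper: both identify the spherical-mean operation in Definition~\ref{defCD} with convolution against the singular kernel $G_{c,\tau}(\cdot,s)$ from Theorem~\ref{theo:01}, unwind the recursion by induction, invoke commutativity of the convolution operators, and then pass to~(\ref{vUpsilon}) via Theorem~\ref{theo:01} and the Convolution Theorem. The only difference is organisational: the paper inducts directly on the full identity $v_{c,\tau}(\cdot,\tau_m+s)=S_\tau^m S_s u$, whereas you first peel off the outer $S_s$ and then induct on $v_{c,\tau}(\cdot,\tau_m)=S_\tau^m u$; your remark that $G_{c,\tau}(\cdot,s)$ is a finite surface measure (so that the convolution and the Convolution Theorem must be read in that sense) is a technical point the paper leaves implicit.
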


\begin{proof}
The claim follows by induction. 
Let $m=0$. Then $t = s \in(0,\tau]$ and from Theorem~\ref{theo:01}, we get 
\begin{equation*}
\begin{aligned}
   (G_{c,\tau}(\cdot,s) *_\x u)(\x) 
        &= \int_{\R^N} \int_{S_1(\vO)} \frac{\delta(\x'+c\,s\,\y)}{|S_1(\vO)|}\,u(\x-\x') 
                            \,\d\x' \,\d \sigma(\y) \\
        &= \int_{S_1(\vO)} \frac{u(\x+c\,s\,\y)}{|S_1(\vO)|} \,\d \sigma(\y) 
         = v(\x,t)\,. 
\end{aligned}
\end{equation*}
Now we assume the induction assumption 
$$
   v(\cdot,r + \tau_{m-1}) = S_\tau^{m-1} \,S_r\,u 
          \qquad \mbox{for}\qquad  
   r\in(0,\tau]\,.
$$
Let $t = s + \tau_m$ with $s\in(0,\tau]$. From the induction assumption 
with $r:=\tau$, Theorem~\ref{theo:01} and Definition~\ref{defCD}, we infer 
\begin{equation*}
\begin{aligned}
   & S_\tau^m \,S_s\,u(\x) 
         = S_s \,S_\tau^m\,u(\x) 
         = S_s\, v(\x,\tau_m) \\
         & \qquad = \int_{\R^N} \int_{S_1(\vO)} \frac{\delta(\x'+c\,s\,\y)}{|S_1(\vO)|}\, 
                                   v(\x-\x',\tau_m) 
                       \,\d\x' \,\d \sigma(\y) \\
         & \qquad = \int_{S_1(\vO)} \frac{v(\x+c\,s\,\y,\tau_m)}{|S_1(\vO)|} 
                       \,\d \sigma(\y) 
         =  v(\x,s+\tau_m )\,,
\end{aligned}
\end{equation*}
since $n(t)=m$. 

Finally, the representation~(\ref{vUpsilon}) follows from Theorem~\ref{theo:01} and 
the Convolution Theorem~(\ref{convth}) (cf. Appendix). 
This concludes the proof. 
\end{proof}

\begin{rema}\label{rema:discSG}
According to Theorem~\ref{theo:02} the family of operators $\{S_{\tau_m}\,|\,m\in\N_0\}$ 
is a \emph{discrete semigroup}, i.e. $S_0$ is the identity and 
$$
     S_{\tau_m+\tau_n} = S_{\tau_m}\,S_{\tau_n}  \qquad\mbox{for}\qquad m,\,n\in\N_0\,.
$$
Formally, the limit $\tau\to 0$ leads to a continuous semigroup 
$\{S_t\,|\,t\geq 0\}$. Indeed, in Subsection~\ref{subsec-SAIOC} we show for the case 
$N=2$ that the limit $\tau\to 0$ under the side condition 
$\frac{c^2\,\tau}{2\,N}=const.$ yields the standard diffusion model and thus 
$\lim_{\tau\to 0}\{S_{\tau_m}\,|\,m\in\N_0\}$ is a strongly continuous semigroup 
on $[0,\infty)$. 
A consequence of this limit process is that $c\to \infty$, i.e. this limit diffusion 
process is not causal. Because of these facts, we denote the Green function of 
\emph{standard diffusion} by $G_{\infty,0}$. 
\end{rema}

\begin{coro}\label{coro} 
The diffusion model defined by Definition~\ref{defCD} satisfies causality condition 
\begin{equation}\label{causaGcond2}
\begin{aligned}
   \supp (G_{c,\tau}) \subseteq \{(\x,t)\in\R^N\times [0,\infty)\,|\,|\x|\leq c\,t\} \,.
\end{aligned}
\end{equation} 
\end{coro}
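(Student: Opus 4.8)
The plan is to derive the causality bound from the convolution representation of Theorem~\ref{theo:02} together with the explicit support of the one-step Green function obtained in the proof of Theorem~\ref{theo:01}. First I would fix $t\geq 0$ and write $t=\tau_m+s$ with $m\in\N_0$ and $s\in(0,\tau]$; the case $t=0$ is trivial, since then $G_{c,\tau}(\cdot,0)=\delta$ is supported in $\{\vO\}$. By Theorem~\ref{theo:02}, $G_{c,\tau}(\cdot,t)=S_\tau^m\,S_s\,\delta$ is the $m$-fold spatial convolution of $G_{c,\tau}(\cdot,\tau)$ with $G_{c,\tau}(\cdot,s)$, because $S_s\,\delta=G_{c,\tau}(\cdot,s)*_\x\delta=G_{c,\tau}(\cdot,s)$.

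Next I would record, from the computation in the proof of Theorem~\ref{theo:01}, that for every $r\in(0,\tau]$ the distribution
\begin{equation*}
\begin{aligned}
   G_{c,\tau}(\cdot,r)=\int_{S_1(\vO)}\frac{\delta(\cdot+c\,r\,\y)}{|S_1(\vO)|}\,\d\sigma(\y)
\end{aligned}
\end{equation*}
is the normalized surface measure on $S_{c\,r}(\vO)$; in particular it is a compactly supported positive measure with $\supp G_{c,\tau}(\cdot,r)=S_{c\,r}(\vO)=\{\x\in\R^N:|\x|=c\,r\}$.

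I would then apply the support inclusion for convolutions, $\supp(f*_\x g)\subseteq\supp f+\supp g$, which is valid here since every factor is a finite positive measure with compact support. Using it $m$ times yields
\begin{equation*}
\begin{aligned}
   \supp G_{c,\tau}(\cdot,t)\subseteq
      \underbrace{S_{c\,\tau}(\vO)+\dots+S_{c\,\tau}(\vO)}_{m}+S_{c\,s}(\vO)\,.
\end{aligned}
\end{equation*}
By the triangle inequality, any $\x=\x_1+\dots+\x_m+\x_{m+1}$ in this sumset, with $|\x_i|=c\,\tau$ for $i\leq m$ and $|\x_{m+1}|=c\,s$, satisfies $|\x|\leq m\,c\,\tau+c\,s=c\,(\tau_m+s)=c\,t$. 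Hence $\supp G_{c,\tau}(\cdot,t)\subseteq\overline{B_{c\,t}(\vO)}$, and letting $t$ range over $[0,\infty)$ and collecting the spatial slices gives~(\ref{causaGcond2}).

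The only delicate point is the convolution support inclusion for the measures at hand; since the $G_{c,\tau}(\cdot,r)$ are finite positive measures carried by compact spheres, their convolutions are well-defined and the inclusion is the standard one. Alternatively, one may bypass convolutions and argue by induction directly on Definition~\ref{defCD}: the base case $t\in(0,\tau]$ is the support statement above, while the inductive step uses only that the spherical-mean operator with radius $R(t)\leq c\,\tau$ maps a measure supported in $\overline{B_\rho(\vO)}$ to one supported in $\overline{B_{\rho+R(t)}(\vO)}$, which is again the triangle inequality.
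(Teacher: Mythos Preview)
Your proof is correct and follows essentially the same approach as the paper: use Theorem~\ref{theo:02} to write $G_{c,\tau}(\cdot,t)$ as an iterated spatial convolution of the one-step Green functions, then invoke the support inclusion $\supp(f*_\x g)\subseteq\supp f+\supp g$ together with the triangle inequality to land in $\overline{B_{c\,t}(\vO)}$. The paper phrases the sumset step as $B_{R_1}(\vO)+B_{R_2}(\vO)=B_{R_1+R_2}(\vO)$ for open balls rather than working with the spheres directly, but this is only a cosmetic difference.
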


\begin{proof}
We recall that the space convolution of two distributions with compact support $A$ and 
$B$ is well-defined and its support lies in $A+B:=\cup_{\x\in A}\{\x\}+B$.  
Moreover, we note that 
$$
B_{R_1}(\vO)+B_{R_2}(\vO) = B_{R_1+R_2}(\vO)\,,
$$  
where $B_R(\vO)$ denotes the open ball of radius $R$ and center $\vO$. 

Let $t=s+\tau_m$ with $m\in\N_0$ and $s\in (0,\tau]$. From Definition~\ref{defCD}, it 
follows that $\supp (G_{c,\tau}(\x,r))\subseteq B_{c\,r}(\vO)$ for $r\in (0,\tau]$ and 
thus by Theorem~\ref{theo:02} the support of $G_{c,\tau}(\x,t)$ lies in 
$B_{c\,s+c\,m\,\tau}(\vO) = B_{c\,t}(\vO)$, which proves the claim. 
\end{proof}

The following corollary provides us with an alternative definition of causal diffusion 
and it can be used to compare causal and standard diffusion in the $\x-t-$space 
(cf. Subsection~\ref{subsec-SAIOC}).

\begin{coro}\label{coro:waveeq}
Let $\tau_m$ for $m\in\N_0$ and $v_{c,\tau}$ be defined as in Definition~\ref{defCD} with 
$u\in L^1(\R^N)$. The function 
$$
      v(\cdot,s) := v_{c,\tau}(\cdot,\tau_m+s)  \qquad \mbox{for} \qquad s\in (0,\tau]
$$
solves the wave equation 
\begin{equation}\label{seqofeq1}
\begin{aligned}
   &\frac{\partial^2 v}{\partial s^2} 
         + \frac{(N-1)}{s}\,\frac{\partial v}{\partial s} 
    - c^2\,\nabla^2 v = 0  \qquad \mbox{on} \qquad (0,\tau]
\end{aligned}
\end{equation}
with initial conditions\footnote{It can be shown that 
$\frac{\partial v_{c,\tau}}{\partial t}(\cdot,\tau_m-) \not = 0 = 
\frac{\partial v_{c,\tau}}{\partial t}(\cdot,\tau_m+)$, i.e. 
$v_{c,\tau}$ is not continuous at the set of time instants $\{\tau_m\,|\,m\in\N\}$ 
where the semigroup property holds. This is in strong contrast to standard diffusion.}
\begin{equation}\label{initcond}
\begin{aligned}
   &v(\cdot,0) = v_{c,\tau}(\cdot,\tau_m)\,\qquad\mbox{and}\qquad
       \frac{\partial v}{\partial s}(\cdot,0+) = 0  \,.
\end{aligned}
\end{equation}
Here $v_{c,\tau}(\cdot,0)$ ($m=0$) is understood as the initial distribution 
$u\in L^1(\R^N)$. 
\end{coro}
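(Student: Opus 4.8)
The plan is to verify everything in the $\x$-Fourier domain, where the wave equation~(\ref{seqofeq1}) collapses directly onto the defining ordinary differential equation for $\Upsilon_N$. Fix $m\in\N_0$ and write $\hat v(\k,s)$ for the $\x$-Fourier transform of $v(\cdot,s)=v_{c,\tau}(\cdot,\tau_m+s)$. Since the Laplacian acts as the Fourier multiplier $\widehat{\nabla^2 v}=-|\k|^2\,\hat v$, applying $\F$ in $\x$ to~(\ref{seqofeq1}) converts the PDE into the ordinary differential equation in $s$
\begin{equation*}
\begin{aligned}
   \frac{\partial^2 \hat v}{\partial s^2}(\k,s)
      + \frac{(N-1)}{s}\,\frac{\partial \hat v}{\partial s}(\k,s)
      + c^2\,|\k|^2\,\hat v(\k,s) = 0\,,
\end{aligned}
\end{equation*}
which I must check for each fixed $\k$ and every $s\in(0,\tau]$.

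First I would insert the representation from Theorem~\ref{theo:02}: for $s\in(0,\tau]$ it reads $\hat v(\k,s)=C_m(\k)\,\Upsilon_N(|\k|\,c\,s)$, where $C_m(\k):=(2\,\pi)^{-N/2}\,\Upsilon_N(|\k|\,c\,\tau)^m\,\hat u(\k)$ is \emph{independent} of $s$. Setting $w:=|\k|\,c\,s$ and differentiating by the chain rule gives $\partial_s\hat v=C_m\,|\k|\,c\,\Upsilon_N'(w)$ and $\partial_s^2\hat v=C_m\,(|\k|\,c)^2\,\Upsilon_N''(w)$. Substituting and dividing by $C_m(\k)\,(|\k|\,c)^2$ at those $\k\ne\vO$ with $C_m(\k)\ne0$ (at the remaining $\k$ one has $\hat v(\k,\cdot)\equiv0$ and the identity is trivial), and using $s\,|\k|\,c=w$, reduces the equation to
\begin{equation*}
\begin{aligned}
   \Upsilon_N''(w) + \frac{(N-1)}{w}\,\Upsilon_N'(w) + \Upsilon_N(w) = 0\,,
\end{aligned}
\end{equation*}
which is precisely the defining equation~(\ref{defUpsilon}) of $\Upsilon_N$, so the PDE holds. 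For the initial conditions~(\ref{initcond}) I would evaluate the representation at $s=0+$: since $\Upsilon_N(0+)=1$ one gets $\hat v(\k,0+)=C_m(\k)$, which by~(\ref{vUpsilon}) applied at $t=\tau_m$ equals $\hat v_{c,\tau}(\k,\tau_m)$, so $v(\cdot,0)=v_{c,\tau}(\cdot,\tau_m)$ after inverting $\F$; and since $\Upsilon_N'(0+)=0$ one gets $\partial_s\hat v(\k,0+)=C_m(\k)\,|\k|\,c\,\Upsilon_N'(0+)=0$, hence $\frac{\partial v}{\partial s}(\cdot,0+)=0$.

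The main point requiring care is the low regularity: for $u\in L^1$, and in particular for the Green function where $u=\delta$, the object $v(\cdot,s)$ is only a distribution, so~(\ref{seqofeq1}) must be interpreted distributionally, which is exactly the content that the $\k$-domain computation supplies (for each $\k$ the map $s\mapsto C_m(\k)\,\Upsilon_N(|\k|\,c\,s)$ is smooth, so the $s$-derivatives are classical while the Laplacian is the multiplier $-|\k|^2$). Two boundary situations deserve a separate word. At $\k=\vO$ the representation gives $\hat v(\vO,s)\equiv C_m(\vO)$, so every term of the reduced ODE vanishes identically and the division step is simply bypassed. Near $s=0$ the singular coefficient $(N-1)/s$ is harmless precisely because $\Upsilon_N'(0+)=0$ forces $\partial_s\hat v(\k,0+)=0$, keeping $\frac{(N-1)}{s}\,\partial_s\hat v$ bounded; this is the structural reason the second condition in~(\ref{initcond}) must be the homogeneous one, and it is where I would expect the only genuine subtlety to sit.
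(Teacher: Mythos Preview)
Your argument is correct and follows essentially the same route as the paper's proof: pass to the $\k$-domain, use the factored representation $\hat v(\k,s)=C_m(\k)\,\Upsilon_N(|\k|cs)$ from Theorem~\ref{theo:02}, reduce~(\ref{seqofeq1}) to the ODE for $\Upsilon_N$, and read off the initial conditions from $\Upsilon_N(0+)=1$, $\Upsilon_N'(0+)=0$. The only slip is a label: the ODE you invoke is the one in Corollary~\ref{coro:Upsilon}, not equation~(\ref{defUpsilon}), which is the power-series definition.
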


\begin{proof}
From Corollary~\ref{coro:Upsilon} (cf. Appendix), it follows that 
\begin{equation*}
\begin{aligned}
   &\frac{\partial^2 \Upsilon(|\k|\,c\,s)}{\partial (|\k|\,c\,s)^2} 
    + \frac{(N-1)}{|\k|\,c\,s}\,\frac{\partial \Upsilon(|\k|\,c\,s)}{\partial (|\k|\,c\,s)} 
    + c^2\,\Upsilon(|\k|\,c\,s) = 0  \,,
\end{aligned}
\end{equation*}
where $\k$ and $c$ are fixed. From this, and 
$$
   \hat v(\k,s) 
         =  \Upsilon_N(|\k|\,c\,\tau)^m\, \Upsilon_N(|\k|\,c\,s)\, \hat u(\k)\,, 
$$
we obtain 
\begin{equation*}
\begin{aligned}
   &\frac{\partial^2 \hat v}{\partial s^2} 
         + \frac{(N-1)}{s}\,\frac{\partial \hat v}{\partial s} 
    - c^2\,\i^2\,|\k|^2\,\hat v = 0 \,.
\end{aligned}
\end{equation*}
But this is equivalent to equation~(\ref{seqofeq1}). 
The first initial condition follows from $\Upsilon_N(0+)=1$ (cf. 
Corollary~\ref{coro:Upsilon}) and 
$\hat v_{c,\tau}(\k,\tau_m+) =\hat u(\k) \,\Upsilon_N(|\k|\,c\,\tau)^m$:
\begin{equation*}
\begin{aligned}
       \hat v(\k,0+) 
        = \hat u(\k) \,\Upsilon_N(|\k|\,c\,\tau)^m\,  
               \Upsilon_N (|\k|\,c\,0+)
        = \hat v_{c,\tau}(\k,\tau_m+) \,. 
\end{aligned}
\end{equation*}
Finally, the second initial condition follows from  $\Upsilon'(0+)=0$ (cf. 
Corollary~\ref{coro:Upsilon}): 
\begin{equation*}
\begin{aligned}
       \frac{\partial \hat v}{\partial s}(\k,0+) 
        = \hat u(\k) \,\Upsilon_N(|\k|\,c\,\tau)^m\,  
               \frac{\partial \Upsilon_N}{\partial s} (|\k|\,c\,s)|_{s=0+}
        = 0  \,.
\end{aligned}
\end{equation*}
This concludes the proof. 
\end{proof}

\section{Standard diffusion versus causal diffusion}
\label{sec-comp}

In the following we compare standard and causal diffusion in the $\k-t-$space and 
the $\x-t-$space. As explained in Remark~\ref{rema:discSG}, we denote the Green 
function of standard diffusion by $G_{\infty,0}$, since $c=\infty$ and $\tau=0$. 
Similarly we use the notation 
$$
     v_{\infty,0} := G_{\infty,0} *_\x u  \qquad\mbox{for}\qquad u\in L^1(\R^N)\,.
$$

\subsection{Comparison in the $\k-t-$space}
\label{subsec-Comp}

First we recall the definition of the Green function $G_{\infty,0}$ of standard  
diffusion and establish the \emph{link relation}
\begin{equation}\label{defD0}
   D_0 = \frac{c^2\,\tau}{2\,N}   \, 
\end{equation}
between the diffusivity $D_0$ of standard diffusion and the 
parameters $c$ and $\tau$ of causal diffusion. Then we compare the Green function of 
both processes 
in the $\k-t-$space. 

It is well-known that the Green function of standard diffusion reads as follows 
(cf. e.g.~\cite{Harris79,Fet80,Wei98a,Eva99,Kow11})
\begin{equation*}%\label{Ginftyxt}
\begin{aligned}
   G_{\infty,0} (\x,t) := (4\,\pi\,D_0\,t)^{-N/2} \, 
                 \exp{\left(-\frac{|\x|^2}{4\,D_0\,t}\right)} 
  \qquad (\x\in\R^N,\,t>0)\,
\end{aligned}
\end{equation*}
and that its Fourier transform with respect to $\x$ is given by  
\begin{equation}\label{Ginftykt}
\begin{aligned}
   \hat G_{\infty,0}(\k,t) = (2\,\pi)^{-N/2} \, \exp{\left(-D_0\,|\k|^2\,t\right)}  
  \qquad (\k\in\R^N,\,t>0)\,.
\end{aligned}
\end{equation}
For sufficiently small $|\k|$ (and $t=\tau$) we have
$$
    \hat G_{\infty,0}(\k,\tau) 
         \approx (2\,\pi)^{-N/2}\,\left(1 - D_0\,|\k|^2\,\tau\right)\,.
$$
For causal diffusion we get a similar approximation from Theorem~\ref{theo:01} 
together with Lemma~\ref{lemm:hatGc} (cf. Appendix), namely 
\begin{equation*}
\begin{aligned}
   \hat G_{c,\tau}(\k,\tau)
       \approx (2\,\pi)^{-N/2}\,\left(1 - \frac{|\k|^2\,c^2\,\tau^2}{2\,N}\right)\,.
\end{aligned}
\end{equation*} 
Comparison of these first order approximations yields the link relation~(\ref{defD0}).

\begin{rema} 
Because of 
$$
    2^a = e^{a\,\log 2}  \qquad\mbox{for}\qquad a\in\R\,,
$$
the function 
\begin{equation}\label{Gsq}
\begin{aligned}
   G_\#(\x,t) := (4\,\pi\,D_0\,t)^{-N/2} \, 
            2^{\left(-\frac{|\x|^2}{4\,D_0\,t}\right)} 
  \qquad (\x\in\R^N,\,t>0)\,.
\end{aligned}
\end{equation}
satisfies the standard diffusion equation with diffusion constant $D_\#:=D_0/\log(2)$, 
i.e.  
\begin{equation}\label{hatGsq}
\begin{aligned}
   \hat G_\#(\k,t) = (2\,\pi)^{-N/2} \, \exp{\left(-D_\#\,|\k|^2\,t\right)}  \,.
\end{aligned}
\end{equation}
We use this (perturbed) diffusion model as a third reference model.  
\end{rema}

For the rest of this subsection we focus on the case $N=3$ for which we have
(cf. Theorems~\ref{theo:01} and~\ref{theo:02}):
\begin{equation}\label{hatGc3d}
\begin{aligned}
   \hat G_{c,\tau}(\k,\tau_m+s) 
      &= (2\,\pi)^{-3/2}\,\mbox{sinc}^m (|\k|\,c\,\tau)\,\mbox{sinc} (|\k|\,c\,s)\,
\qquad s\in (0,\tau]\,.
\end{aligned}
\end{equation}
We see that the function 
$\k\mapsto \hat G_{c,\tau}(\k,t)$ ($t>0$) is not $C^\infty$,  
since the necessary condition 
\begin{equation*}
\begin{aligned}
   \exists C>0 \,\forall m\in\N\,\forall \k\in\R^N:\quad    | \hat G_{c,\tau}(\k,t)| 
      \leq C\,(1+|\k|)^{-m}   
\end{aligned}
\end{equation*}
does not hold (cf. Paley-Wiener-Schwartz Theorem in~\cite{Hoe03}). 
Moreover, it is easy to see from~(\ref{hatGc3d}) that 
\begin{itemize}
\item [1)]  $\k\mapsto\hat G_{c,\tau}(\k,t)$ has a discrete and infinite set of zeros, 

\item [2)]  $\k\mapsto\hat G_{c,\tau}(\k,t_1)$ and $\k\mapsto\hat G_{c,\tau}(\k,t_2)$ 
             have the same zeros if and only if $(t_1-t_2)/\tau\in\N_0$, 

\item [3)]  some of the zeros move with speed $c$ during the time intervals 
            $(\tau_{m-1},\tau_m]$ ($m\in\N$) such that at the time instants 
            $t=\tau_{m-1}$ and $t=\tau_m$ the same set $Z$ of zeros occur. 
            The set $Z$ is given by $\{\k\in\R^3\,|\,\hat G_{c,\tau}(\k,\tau)=0\}$.

\end{itemize} 
This behaviour is in contrast to standard diffusion, since 
$\k\mapsto\hat G_{\infty,0}(\k,t)$ is $C^\infty$ and has no zeros at all. However, 
since $G_{c,\tau}(\cdot,t)$ has compact support for each $t>0$, the Paley-Wiener-Schwartz 
Theorem implies a behaviour of such type for causal diffusion. 

%We conclude this subsection with a numerical example. 
Now we perform a numerical comparison. 

\begin{exam}
Let $N=3$, $c=1$, $\tau=1$ and $D_0$ be defined as in~(\ref{defD0}). 
For these parameters Fig.~\ref{fig:image02} shows a numerical comparison 
of the Green function of causal diffusion~(\ref{hatGc3d}) with the 
Green functions~(\ref{Ginftykt}) and~(\ref{hatGsq}) of standard diffusion. 
This and further numerical experiments indicate that 
\begin{itemize}
\item [i)]   $\k\mapsto\hat G_\#(\k,t)$ is closer to $\k\mapsto\hat G_{\infty,0}(\k,t)$  
             than $\k\mapsto\hat G_{c,\tau}(\k,t)$, 

\item [ii)]  if $t$ is large, then $\hat G_\#(\k,t)$, $\hat G_{\infty,0}(\k,t)$ and 
             $\hat G_{c,\tau}(\k,t)$ are very small for large $|\k|$,  

\item [iii)] $\k\mapsto\hat G_{c,\tau}(\k,t)$ has a discrete set of zeros, in particular it 
             is not monotone. If $n$ is even, then $\k\mapsto\hat G_{c,\tau}(\k,t)$ oscillates 
             around zero.

\end{itemize}
This behavior indicates that modeling errors are a serious issue for the backwards 
diffusion problem.  
\end{exam}

\subsection{Comparison in the $\x-t-$space}
\label{subsec-SAIOC}

In the following we demonstrate that for an appropiate parameter set a discretization 
of the standard diffusion equation can yield a similar results as the causal 
diffusion model introduced in Definition~\ref{defCD}. 

In order to keep the following formulas and equations short, 
we focus on the two dimensional case. Consider the diffusion 
of an image with size of pixel $(\Delta x)^2$  and size of time step 
$\Delta t:=\tau$. We use the notion 
$$
    v_{i,j}^m:=v(i\,\Delta x, j\,\Delta x, \tau_m) 
  \qquad\quad \mbox{for $i,j\in\Z$ and $m\in\N_0$.}
$$  
If the length of an image pixel $\Delta x$ satisfies (cf. Definition~\ref{defCD})
$$
         R(\tau) = c\,\tau \equiv \Delta x\,,
$$
then we can use the (rough) approximation 
%(cf. Fig.~\ref{fig:image01}) 
$$
  \int_{|\x-\y|=R(\tau)} \frac{f(\x')}{|S_1(\vO)|} \,d\sigma(\y) 
      \approx (f_{i+1,j} +f_{i-1,j} +f_{i,j+1} +f_{i,j-1})/4\,. 
$$
With this discretization the causal diffusion model~(\ref{defcausdiff10}) is equivalent to 
\begin{equation}\label{feuler}
\begin{aligned}
  \frac{{\blue v_{i,j}^{n+1}}-v_{i,j}^n}{\tau}
    = \frac{\Delta x^2}{4\,\tau}\,
      \left[\frac{{\blue v_{i+1,j}^n} - 2\,v_{i,j}^n   + {\blue v_{i-1,j}^n}}{\Delta x^2} 
     + \frac{{\blue v_{i,j+1}^n} - 2\,v_{i,j}^n   + {\blue v_{i,j-1}^n}}{\Delta x^2} \right]\,,
\end{aligned}
\end{equation}
which is the \emph{Forward Euler method of the classical diffusion equation}. 
The classical diffusion equation can be obtained for $\tau\to 0$ under the side 
condition $\Delta x^2/(4\,\tau)=const$, i.e. the diffusivity corresponds to 
$D_0=\Delta x^2/(2\,N\,\tau)$ with $N=2$. 
Carring out this limit process yields $c=\infty$, i.e. the diffusion speed can be 
interpreted as infinite. In particular, this shows that the discrete semigroup 
$\{S_{\tau_m}\,|\,m\in\N_0\}$ (cf. Theorem~\ref{theo:02}) converges to 
the strongly continuous semigroup of the standard diffusion equation.

\begin{rema}
Similarly, the discretization of the wave equation~(\ref{seqofeq1}) for $N=2$ 
yields the \emph{Forward Euler method of the classical diffusion equation}, since 
\begin{equation*}
\begin{aligned}
    \partial_t^2 v + \frac{c}{\tau}\, \partial_t v 
     \equiv      \frac{v_{i,j}^{n+1}-\not 2\,v_{i,j}^n + \not{v_{i,j}^{n-1}}}{\tau^2} 
                      + \frac{\not v_{i,j}^{n} - \not v_{i,j}^{n-1}}{\tau^2}
     \equiv \frac{\partial_t v}{\tau}\,. 
\end{aligned}
\end{equation*} 
Here the CFL condition is satisfied for the discretization $\Delta t:=\tau$ and 
$\Delta x:= c\,\tau$ (cf.~\cite{CouFriLev67}). However, to obtain the Forward 
Euler method we have to neglected the second condition in~(\ref{initcond}), i.e. 
\begin{equation*}
\begin{aligned}
       \frac{\partial v}{\partial t}(\cdot,\tau_m+) = 0  
\qquad\mbox{for all}\qquad m\in\N \,.
\end{aligned}
\end{equation*} 
\end{rema}

\vspace{0.5cm}
The following numerical example indicates that for \emph{sufficiently large time $t$} the 
forward Euler method (with fine space discretization) can be considered as a noncausal 
approximation of the causal diffusion model. 

\begin{exam}\label{exam:A}
Let N = 2, $c = 6.3\cdot 10^{-3}\,m/s$, $R:=10^{-3}\,m$ and $\tau:=R/c$. Here we use 
the notation $R:=R(\tau)$. To the parameters $c$ and $\tau$ (with $N=2$) of causal 
diffusion corresponds the diffusion constant 
$D_0 := c^2\,\tau/4=1.575\cdot 10^{-6}\,m^2/s$ 
of standard diffusion. The initial mass distribution is shown in Fig.~\ref{fig:ex01}. 
To calculate $v_{c,\tau}$ defined as in Definition~\ref{defCD}, the circles with radius 
$R$ were discretized by $65$ points (cf. Fig.~\ref{fig:ex01}). 
The noncausal distribution $v_{\infty,0}$ was calculated via the forward Euler method 
for the standard diffusion equation. To guaranteed the convergence of this scheme, 
the discretization was chosen as $\Delta x := R/10$ and 
$\Delta t := \frac{\Delta x^2}{2\,N\,D_0}$ such that 
$$
    \frac{\Delta x^2}{2\,N\,\Delta t} \geq D_0 \,
$$
holds. 
A time sequence of $v_{c,\tau}(\cdot,t)$ and $v_{\infty,0}(\cdot,t)$ for the time instants 
$t=\frac{\tau}{3},\,\frac{2\,\tau}{3},\,\tau,\,\ldots,\,2\,\tau$ is visualized in 
Fig.~\ref{fig:ex01b} and Fig.~\ref{fig:ex01c}, respectively. 
As expected each distribution $v_{\infty,0}(\cdot,t)$ is very smooth, in contrast to the 
distribution $v_{c,\tau}(\cdot,t)$ of causal diffusion. No edge or corner appears in 
the case of standard diffusion. 
Although it is not visible, in contrast to $v_{c,\tau}(\cdot,t)$, the support of 
$v_{\infty,0}(\cdot,t)$ does not lie within the image. 
For this example, $v_{\infty,0}(\cdot,t)$ and $v_{c,\tau}(\cdot,t)$ are very similar 
after a time period of about $t=3\,\tau$. Again, we note that this means that 
modeling errors for the backward diffusion problem is an issue. 
\end{exam}

\section{Basic properties of the forward operator}
\label{sec-invprobprop}

The calculation of a diffusing substance over the time period $T$ with initial 
concentration $u\in L^1(\R^N)$ corresponds to the evaluation of the forward 
operator~(\ref{FTv1}). 
We define this as the direct problem and consider the estimation of the initial 
concentration $u$ from appropriate data $w$. That is to say the solution of the 
Fredholm integral equation of the first kind 
\begin{equation}\label{invprob}
\begin{aligned}
   F_T(u) = w  \qquad\quad \mbox{for given data $w$.}
\end{aligned}
\end{equation} 
This inverse problem requires the knowledge of $c$, $\tau$ and $T$. 
In this section we investigate the properties of the forward operator and in the 
subsequent section we discuss and perform numerical simulations of the inverse problem. 
We use the notation: 

\begin{defi}
a) Let $T>0$ and $\Omega_0$ be an open subset of $B_r(\vO)$ ($r>0$). Then we define 
$\Omega_T := B_{r+c\,T}(\vO)$. Here $B_r(\vO)$ denotes the open ball with center 
$\vO$ and radius $r$. \\
b) $L_c^2(\R^N)$ is defined as the space of $L^2-$functions with compact support 
in $\R^N$.
\end{defi}

\begin{theo}\label{theo:inj}
Let $T>0$ and $G_{c,\tau}$ be as in Definition~\ref{defCD}. 
The sets of zeros of $\hat G_{c,\tau}(\cdot,T)$ is discrete and countably infinite, and 
the operator $F_T:L_c^1(\R^N)\to L_c^1(\R^N)$ is injective. 
\end{theo}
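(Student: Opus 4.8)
The plan is to reduce everything to the radial symbol furnished by Theorems~\ref{theo:01} and~\ref{theo:02}, and then to exploit the regularity of $\hat u$ that follows from $u$ having compact support. First I would write $T=\tau_M+s_0$ with $M\in\N_0$ and $s_0\in(0,\tau]$, as in Definition~\ref{defCD}, so that Theorems~\ref{theo:01} and~\ref{theo:02} (applied to $u=\delta$) give
\begin{equation*}
\begin{aligned}
   \hat G_{c,\tau}(\k,T)
      = (2\,\pi)^{-N/2}\,\Upsilon_N(|\k|\,c\,\tau)^M\,\Upsilon_N(|\k|\,c\,s_0)\,.
\end{aligned}
\end{equation*}
Thus $\hat G_{c,\tau}(\cdot,T)$ is radial, and its zero set is the union of the spheres $\{|\k|=\rho\}$ over those radii $\rho>0$ with $\Upsilon_N(\rho\,c\,\tau)=0$ or $\Upsilon_N(\rho\,c\,s_0)=0$. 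I would then make explicit that, for $N\geq 2$, the phrase \emph{discrete} is to be read in the radial variable $|\k|$, since in $\R^N$ the full zero set is a countable union of concentric spheres.

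For the counting statement I would use the reduction of the defining ODE for $\Upsilon_N$ to Bessel's equation: substituting $\Upsilon_N(t)=\mathrm{const}\cdot t^{-(N/2-1)}\,\mathrm{J}_{N/2-1}(t)$ solves the equation with the stated initial conditions, so the positive zeros of $\Upsilon_N$ coincide with those of $\mathrm{J}_{N/2-1}$, which form a simple, isolated, countably infinite sequence $0<j_1<j_2<\cdots$ tending to $\infty$ with no finite accumulation point. Hence the radial zeros occur exactly at $\rho=j_l/(c\,\tau)$ and $\rho=j_l/(c\,s_0)$; the union of these two locally finite sequences is again locally finite, hence discrete, and countably infinite, which is the first assertion.

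For injectivity I would take $u\in L_c^1(\R^N)$ with $F_T(u)=0$, i.e. $v_{c,\tau}(\cdot,T)=0$, and apply Theorem~\ref{theo:02} together with the Convolution Theorem to obtain
\begin{equation*}
\begin{aligned}
   0 = \hat v_{c,\tau}(\k,T)
     = (2\,\pi)^{-N/2}\,\Upsilon_N(|\k|\,c\,\tau)^M\,\Upsilon_N(|\k|\,c\,s_0)\,\hat u(\k)
     \qquad (\k\in\R^N)\,.
\end{aligned}
\end{equation*}
Off the zero set $Z$ of $\hat G_{c,\tau}(\cdot,T)$ the prefactor does not vanish, so $\hat u\equiv 0$ on $\R^N\setminus Z$. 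Since $Z$ is a countable union of spheres it is closed and Lebesgue-null, hence has empty interior, so $\R^N\setminus Z$ is open and dense; because $u\in L^1$ has compact support, $\hat u$ is continuous (indeed entire, by Paley--Wiener), and a continuous function vanishing on a dense set vanishes identically. Therefore $\hat u\equiv 0$, i.e. $u=0$, which gives injectivity. The main obstacle I anticipate is not the analytic density argument but the zero structure of $\Upsilon_N$ — the reduction to $\mathrm{J}_{N/2-1}$ and the classical facts that its positive zeros are simple, isolated, and countably infinite without finite accumulation; I would invoke the appendix results on $\Upsilon_N$ (and~(\ref{Upsilon123})) for these, rather than reprove them here.
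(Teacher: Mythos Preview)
Your argument is correct and follows the same overall architecture as the paper's proof---factor $\hat G_{c,\tau}(\cdot,T)$ through $\Upsilon_N$, pin down its zeros, then use the regularity of $\hat u$ coming from compact support---but the route to the zero structure is genuinely different. The paper argues discreteness by extending $\k\mapsto\hat G_{c,\tau}(\k,s)$ to an entire function on $\C^N$ via Paley--Wiener and invoking an identity principle (and then falls back on Theorem~\ref{theo:Upsilon2} together with the known zeros of $\cos$ and $\mathrm{J}_0$ for countable infinitude), whereas you bypass this by identifying $\Upsilon_N(t)=c_N\,t^{-(N/2-1)}\mathrm{J}_{N/2-1}(t)$ from the ODE in Corollary~\ref{coro:Upsilon} and citing the classical zero theory of Bessel functions directly. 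Your route is more explicit and also sidesteps a delicacy you rightly flag: for $N\ge 2$ the zero set in $\R^N$ is a union of spheres and hence not literally discrete, so the statement must be read in the radial variable; the paper's accumulation-point step, taken at face value in several complex variables, does not by itself force $f_{ext}\equiv 0$. One small caveat: the Bessel identification you rely on is not stated in the appendix---only the ODE and the recursion~(\ref{I}) are---so you should either derive it in one line from Corollary~\ref{coro:Upsilon} or instead use~(\ref{I}) to reduce to $\Upsilon_1=\cos$ and $\Upsilon_2=\mathrm{J}_0$ as the paper does. For injectivity the two proofs essentially coincide: both get $\hat u=0$ off a closed Lebesgue-null set and finish via Paley--Wiener, though your density-plus-continuity phrasing is slightly leaner, since continuity of $\hat u$ (already guaranteed by $u\in L^1$) suffices without invoking the entire extension.
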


\begin{proof}
a) For $t=\tau_n+s$ with $n\in\N_0$ and $s\in (0,\tau]$, we have 
$\hat G_{c,\tau}(\k,\tau_n+s)= \hat G_{c,\tau}(\k,\tau)^n\,\hat G_{c,\tau}(\cdot,s)$. 
Hence  it is sufficient to show that the sets of zeros of $\hat G_{c,\tau}(\cdot,s)$ 
is discrete and countably infinite. Assume that the function $f:\k\mapsto \hat G_{c,\tau}(\cdot,s)$ 
vanishes on a non-empty $M$ with an accumulation point $\k_0\in M$. 
Because $f$ has compact support, it can be extended to an analytic function 
$f_{ext}:\C^N\to\C^N$ such that $f_{ext}(\k)=f(\k)$ for $\k\in \R^N$ 
(Paley-Wiener Theorem). Since $f_{ext}(\k)=0$ for $\k\in M$ and $\k_0\in M$ is an 
accumulation point, $f_{ext}$ is the zero function. Thus $M$ must be discrete. (This 
fact can also be concluded from Theorem~\ref{theo:Upsilon2}.) That the set of zeros of 
$\hat G_{c,\tau}(\cdot,s)$ is countably infinite follows from Theorem~\ref{theo:Upsilon2} 
and the fact that $\Upsilon_1(s)=\cos(s)$ and $\Upsilon_2(s)=\mbox{J}_0(s)$ have  
countably infinite zeros.  \\
b) For the injectivity of $F_T$. Since $u$ and $G_{c,\tau}(\cdot,T)$ have compact 
support, $\hat u$ and $\hat G_{c,\tau}(\cdot,T)$ exist and the Convolution Theorem 
holds (cf. Theorem~7.1.15 in~\cite{Hoe03}). Hence 
$$
     \F\{F_T(u)\} = \hat G_{c,\tau}(\k,T)\,\hat u \,
$$ 
which implies that
$$
   F_T(u) = 0 \qquad\Rightarrow\qquad   u = 0
$$
is equivalent to
$$
   \hat G_{c,\tau}(\cdot,T)\,\hat u = 0 \qquad\Rightarrow\qquad   \hat u = 0\,.
$$ 
From $\hat G_{c,\tau}(\cdot,T)\,\hat u = 0$ and part a) of the proof we infer that 
$\hat u$ vanishes on a non-empty open set $M$. 
Because $u$ has compact support, the Paley-Wiener Theorem implies that $\hat u$ 
can be extended to an analytic function $\hat u_{ext}$ on $\C^N$ satisfying 
$\hat u_{ext}(\k)=0$ for $\k\in M$. Therefore $\hat u_{ext}$ is the zero function 
and consequently $u$ vanishes. This proves that $F_T$ is injective. 
\end{proof}

\begin{theo}\label{theo:FTsa}
The operator $F_T:L_c^2(\R^N)\to L_c^2(\R^N)$ is positive, linear and self-adjoint. 
\end{theo}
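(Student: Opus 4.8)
The plan is to reduce all three assertions to the convolution representation $F_T u = G_{c,\tau}(\cdot,T)*_\x u$ and, where convenient, to its Fourier counterpart $\F\{F_T u\} = \hat G_{c,\tau}(\cdot,T)\,\hat u$ established in the proof of Theorem~\ref{theo:inj}. Linearity is immediate, since convolution is linear in its second argument. That $F_T$ indeed maps $L_c^2(\R^N)$ into itself I would record first: because $\Upsilon_N$ is the Fourier transform of the normalized surface measure on the unit sphere one has $|\Upsilon_N|\leq 1$, so $\hat G_{c,\tau}(\cdot,T)$ is bounded and $\hat G_{c,\tau}(\cdot,T)\,\hat u\in L^2$; combined with the causality Corollary~\ref{coro}, which confines $\supp(F_T u)$ to $\Omega_T$, this gives $F_T u\in L_c^2(\R^N)$.

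For self-adjointness I would invoke Parseval's identity for the unitary Fourier transform on $L^2(\R^N)$ and compute, for $u,w\in L_c^2(\R^N)$,
\begin{equation*}
\begin{aligned}
   \langle F_T u,w\rangle
      = \int_{\R^N} \hat G_{c,\tau}(\k,T)\,\hat u(\k)\,\overline{\hat w(\k)}\,\d\k\,,
   \qquad
   \langle u,F_T w\rangle
      = \int_{\R^N} \hat u(\k)\,\overline{\hat G_{c,\tau}(\k,T)}\;\overline{\hat w(\k)}\,\d\k\,.
\end{aligned}
\end{equation*}
These coincide provided $\hat G_{c,\tau}(\cdot,T)$ is real-valued. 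This is the one genuinely structural point and I regard it as the crux: since $\Upsilon_N$ solves a real linear ODE with real initial data (cf. Corollary~\ref{coro:Upsilon}) it is real-valued, hence by~(\ref{defGcks}) and Theorem~\ref{theo:02} so is $\hat G_{c,\tau}(\cdot,T)$; equivalently, $G_{c,\tau}(\cdot,T)$ is a symmetric measure. Self-adjointness follows.

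For positivity the appropriate notion is that $F_T$ preserves nonnegativity; note that $F_T$ is \emph{not} positive in the quadratic-form sense, since $\hat G_{c,\tau}(\cdot,T)$ takes negative values for every $T>0$ because $\Upsilon_N$ oscillates. By Theorem~\ref{theo:01}, for $s\in(0,\tau]$ the Green function $G_{c,\tau}(\cdot,s)$ is the normalized surface measure on $S_{cs}(\vO)$, a positive measure. Writing $T=\tau_m+s$ with $s\in(0,\tau]$ and applying Theorem~\ref{theo:02}, $G_{c,\tau}(\cdot,T)$ is a convolution of the positive measures $G_{c,\tau}(\cdot,\tau)$ (taken $m$ times) and $G_{c,\tau}(\cdot,s)$; a convolution of positive measures is again a positive measure, so $u\geq 0$ forces $F_T u = G_{c,\tau}(\cdot,T)*_\x u\geq 0$. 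The main obstacle throughout is not the algebra but the realness (symmetry) of $\hat G_{c,\tau}(\cdot,T)$ needed for self-adjointness, which is exactly where the explicit structure of $\Upsilon_N$ must enter.
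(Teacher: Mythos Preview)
Your proof is correct and follows essentially the same scheme as the paper's: well-definedness via the bounded Fourier multiplier, positivity from the averaging structure of Definition~\ref{defCD}, and self-adjointness from symmetry of the kernel. The one difference worth noting is in the self-adjointness step: the paper argues directly in physical space from $G_{c,\tau}(\x-\x',T)=G_{c,\tau}(\x'-\x,T)$ (immediate from the radial symmetry in Definition~\ref{defCD}), whereas you pass to the Fourier side and invoke realness of $\hat G_{c,\tau}(\cdot,T)$ via the real ODE for $\Upsilon_N$ --- these are dual formulations of the same fact, so neither route is deeper, though the paper's is slightly more direct. Your explicit remark that ``positive'' here means positivity-preserving rather than positive as a quadratic form (the latter being false since $\Upsilon_N$ oscillates) is a worthwhile clarification that the paper leaves implicit.
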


\begin{proof}
First we show that $F_T:L_c^2(\R^N)\to L_c^2(\R^N)$ is well-defined. Because $L_c^2(\R^N)$ 
is a subspace of a Hilbert space, it is a Hilbert space, too.   
If $u\in L_c^2(\R^N)$, then $u\in L_c^1(\R^N)$ and thus $F_T(u)\in L^1(\R^N)$. 
Since $G_{c,\tau}$ and $u$ have compact support, their convolution exist and it 
has compact support (cf. Theorem~7.1.15 in~\cite{Hoe03}). Hence 
we obtain $F_T(L_c^2(\R^N))\subseteq L_c^1(\R^N)$. 
According to Parseval's formula and 
$$
    |(2\,\pi)^{N/2}\,\hat G(\k,t)|\leq C   \qquad \mbox{for some constant $C$}\,, 
$$ 
which follows from Theorem~\ref{theo:Upsilon2} (cf. Appendix) and Theorem~\ref{theo:01}, we have 
\begin{equation*}
\begin{aligned}
    \|F_T(u)\|_{L^2}^2 
         &=  (2\,\pi)^{N}\|\F\{F_T(u)\}\|_{L^2}^2 
         =  (2\,\pi)^{N}\,C\,\int_{\R^N} |\hat G(\k,t)\,\hat u(\k)|^2 \,\d\k\\
         &\leq C\,\|u\|_{L^2}^2\, <\infty \,,
\end{aligned}
\end{equation*}
i.e. $F_T(u)\in L_c^2(\R^N)$. Hence the operator is well-defined. 

The positivity and linearity of the operator $F_T$ follows at once from 
Definition~\ref{defCD} and Theorem~\ref{theo:02}, respectively. 

Since $G_{c,\tau}(\x-\x',T)=G_{c,\tau}(\x'-\x,T)$, it follows that
\begin{equation*}
\begin{aligned}
    \langle F_T(u),w\rangle_{L^2} 
          = \int_{\R^N} \int_{\R^N} G_{c,\tau}(\x-\x',T)\,u(\x')\,w(\x)\,\d\,\x'\,\d\,\x
          = \langle u, F_T(w)\rangle_{L^2} \, 
\end{aligned}
\end{equation*}
for $w\in L_c^2(\R^N)$ and thus $F_T$ is self-adjoint. 
This concludes the proof. 
\end{proof}

\begin{theo}
If $N=1$ and $T>0$, then $G_{c,\tau}(\cdot,T)$ is a discrete and positive 
measure and the operator $F_T:L^2(\Omega_0)\to L^2(\Omega_T)$ is not compact. 
\end{theo}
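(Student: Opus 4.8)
The plan is to prove the two assertions in turn: first the explicit structure of $G_{c,\tau}(\cdot,T)$, and then the failure of compactness, which I will reduce to the fact that the Fourier multiplier of $F_T$ does not decay at infinity.

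For the measure claim I would specialise the formula from the proof of Theorem~\ref{theo:01} to $N=1$. There $S_1(\vO)=\{-1,+1\}$ and $|S_1(\vO)|=2$, so for $s\in(0,\tau]$
\begin{equation*}
   G_{c,\tau}(\cdot,s)=\tfrac12\bigl(\delta(\cdot-c\,s)+\delta(\cdot+c\,s)\bigr),
\end{equation*}
a convex combination of two point masses and hence a discrete positive measure. Writing $T=\tau_m+s$ with $m=n(T)\in\N_0$ and $s\in(0,\tau]$, Theorem~\ref{theo:02} yields $G_{c,\tau}(\cdot,T)=G_{c,\tau}(\cdot,\tau)^{*m}*_\x G_{c,\tau}(\cdot,s)$. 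A convolution of finitely many finitely supported positive discrete measures is again a positive discrete measure, supported on the finite sumset of the individual supports; a binomial expansion of the $m$-fold convolution even locates the atoms at the points $(2j-m)\,c\,\tau\pm c\,s$ with weights $2^{-(m+1)}\binom{m}{j}$. This settles the first assertion.

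For non-compactness I would use a weakly null test sequence. By the Convolution Theorem (as used in the proof of Theorem~\ref{theo:inj}) one has $\F\{F_Tu\}=\hat G_{c,\tau}(\cdot,T)\,\hat u$, and for $u$ supported in $\Omega_0$ the image $F_Tu$ is automatically supported in $\Omega_T$; hence by Parseval's formula (cf. the proof of Theorem~\ref{theo:FTsa}) $\|F_Tu\|_{L^2(\Omega_T)}^2=(2\pi)\,\|\sigma\,\hat u\|_{L^2(\R)}^2$ with symbol $\sigma(k):=\hat G_{c,\tau}(k,T)=(2\pi)^{-1/2}\cos^m(kc\tau)\cos(kcs)$ (recall $\Upsilon_1=\cos$). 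The decisive point, in sharp contrast to standard diffusion where the symbol $e^{-D_0k^2T}$ forces compactness, is that $\sigma$ does \emph{not} tend to $0$: I claim there are $\delta>0$ and $k_n\to\infty$ with $|\sigma(k_n)|\geq\delta$. Granting this, fix $\phi\in C_c^\infty(\Omega_0)$ with $\|\phi\|_{L^2}=1$ and set $u_n(x):=e^{\i\,k_n x}\phi(x)$. Then $\|u_n\|_{L^2}=1$, $u_n\rightharpoonup0$ weakly by Riemann–Lebesgue, and $\hat u_n(\cdot)=\hat\phi(\cdot-k_n)$. Since $\sigma$ is Lipschitz with a constant $L$ independent of $k$, one has $|\sigma(k_n+\xi)|\geq\delta/2$ for $|\xi|\leq\delta/(2L)$, whence
\begin{equation*}
   \|F_Tu_n\|_{L^2}^2=(2\pi)\int_\R|\sigma(k_n+\xi)|^2\,|\hat\phi(\xi)|^2\,\d\xi
      \geq (2\pi)\Bigl(\tfrac{\delta}{2}\Bigr)^2\int_{|\xi|\leq\delta/(2L)}|\hat\phi(\xi)|^2\,\d\xi=:C_0>0
\end{equation*}
for all $n$. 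A compact operator maps weakly null sequences to strongly null ones, so $\|F_Tu_n\|_{L^2}\geq\sqrt{C_0}$ rules this out and $F_T$ is not compact.

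The main obstacle is the claimed non-decay of $\sigma$, since it couples the two frequencies $c\tau$ and $cs$. I would establish it by taking $k_n=2\pi n/(c\tau)$, so that $\cos^m(k_nc\tau)=1$ and $|\sigma(k_n)|=(2\pi)^{-1/2}|\cos(2\pi n\,s/\tau)|$; this stays bounded away from $0$ along $n$ ranging over the multiples of the denominator when $s/\tau\in\Q$, and along an equidistribution subsequence when $s/\tau\notin\Q$ (in both cases the value tends to $(2\pi)^{-1/2}$). Equivalently, $\sigma$ is a nonzero Bohr almost periodic function and therefore has strictly positive mean square, which already forces $\limsup_{|k|\to\infty}|\sigma(k)|>0$. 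Abstractly this non-decay is exactly what the Paley–Wiener–Schwartz theorem predicts for the compactly supported distribution $G_{c,\tau}(\cdot,T)$, and it is precisely the structural feature that separates causal from standard diffusion and breaks compactness.
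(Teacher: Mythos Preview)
Your treatment of the discrete-measure claim is essentially the paper's. For non-compactness you take a different route. The paper writes $F_T=(R_\tau+L_\tau)^m(R_s+L_s)$ (up to the factor $2^{-(m+1)}$) with $R_a u=u(\cdot-a)$, $L_a u=u(\cdot+a)$, and declares this non-compact because the individual shifts are. You instead stay on the Fourier side: you locate $k_n\to\infty$ along which the symbol $\cos^m(k_nc\tau)\cos(k_ncs)$ is bounded away from zero, and push the weakly null modulated bumps $e^{\i k_n x}\phi$ through $F_T$ to get images of norm $\geq\sqrt{C_0}$. The two pictures are dual---the shift expansion is the inverse transform of your trigonometric symbol---but your version is more self-contained: the paper's clause ``not compact because $R_s,L_s$ are noncompact'' is elliptic, since a sum of noncompact operators can perfectly well be compact, and your explicit weakly-null test sequence supplies exactly the missing obstruction. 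The price is the small case split on $s/\tau$ rational versus irrational (or the appeal to almost-periodicity), which the paper's structural formulation sidesteps. One harmless slip: with the paper's Fourier convention one gets $\hat u_n=\hat\phi(\cdot+k_n)$ rather than $\hat\phi(\cdot-k_n)$, but since $\sigma$ is even nothing changes.
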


\begin{proof}
Without loss of generality we set $c=1$. According to Theorem~\ref{theo:01} we have 
for $s\in (0,\tau]$: 
$$
     G_{c,\tau} (x,s) 
        = \F^{-1}\{\cos(k\,s)\}(x) 
        = \frac{1}{2}\,[\delta(x-s) + \delta(x+s)]\,,
$$
which implies that $G_{c,\tau} (x,T)$ is a convolution of positive distributions with 
singular support. Therefore $G_{c,\tau}(\cdot,T)$ corresponds to a discrete and 
positive measure. 
That $F_T$ is not compact follows from the fact that
$$
   F_T = (R_\tau+L_\tau)^m\,(R_s+L_s)  \qquad \mbox{for} \qquad T=\tau_m+s\,,
$$
where  $R_s,\,L_s:L^2(\R)\to L^2(\R)$ are noncompact operators defined by 
$$
    R_s(u):=u(\cdot-s)  \qquad \mbox{and} \qquad 
    L_s(u):=u(\cdot+s)\,.
$$
\end{proof}

\begin{theo}
If $N=2$ and $T>2\,\tau$, then $G_{c,\tau}(\cdot,T)\in L_c^2(\R^2)$ and the 
operator $F_T:L^2(\Omega_0)\to L^2(\Omega_T)$ is compact. 
\end{theo}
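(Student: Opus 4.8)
The plan is to prove the two assertions in turn: first that $G_{c,\tau}(\cdot,T)$ belongs to $L_c^2(\R^2)$, and then that the convolution operator $F_T$ is Hilbert--Schmidt and therefore compact. The compactness will follow almost mechanically once the $L^2$-membership of the kernel is in hand, so the real work lies in the first assertion.

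First I would settle membership in $L_c^2(\R^2)$. Compact support is immediate from the causality Corollary~\ref{coro}, which gives $\supp(G_{c,\tau}(\cdot,T))\subseteq\{|\x|\leq c\,T\}$. For square integrability I would move to the Fourier side and use Parseval, so it suffices to show $\hat G_{c,\tau}(\cdot,T)\in L^2(\R^2)$. Writing $T=\tau_m+s$ with $s\in(0,\tau]$, Theorems~\ref{theo:01} and~\ref{theo:02} yield for $N=2$ the explicit form
\begin{equation*}
\begin{aligned}
   \hat G_{c,\tau}(\k,T) = (2\,\pi)^{-1}\,\mbox{J}_0(|\k|\,c\,\tau)^m\,\mbox{J}_0(|\k|\,c\,s)\,.
\end{aligned}
\end{equation*}
The decisive analytic input is the standard asymptotic bound $|\mbox{J}_0(t)|\leq C\,(1+t)^{-1/2}$, which gives the pointwise estimate $|\hat G_{c,\tau}(\k,T)|\leq C'\,(1+|\k|)^{-(m+1)/2}$.

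The point at which the hypothesis $T>2\,\tau$ enters is the observation that $T>2\,\tau$ forces $m=n(T)\geq 2$: indeed $T=\tau_m+s$ with $s\in(0,\tau]$ gives $T\leq 2\,\tau$ whenever $m\leq 1$. Passing to polar coordinates in $\R^2$, where $\d\k = \rho\,\d\rho\,\d\theta$ with $\rho=|\k|$, the tail of $\int_{\R^2}|\hat G_{c,\tau}(\k,T)|^2\,\d\k$ is controlled by $\int^\infty \rho^{-(m+1)}\,\rho\,\d\rho=\int^\infty \rho^{-m}\,\d\rho$, which converges precisely when $m\geq 2$. Thus $\hat G_{c,\tau}(\cdot,T)\in L^2(\R^2)$, and by Parseval $G_{c,\tau}(\cdot,T)\in L_c^2(\R^2)$. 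I expect this threshold computation to be the main obstacle: one must verify that the accumulated decay of the $m$ Bessel factors (each contributing $|\k|^{-1/2}$) together with the two-dimensional radial measure lands exactly on the borderline at $m=2$, and is genuinely insufficient for $m=1$.

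For compactness I would then exhibit $F_T$ as an integral operator with kernel $K(\x,\x'):=G_{c,\tau}(\x-\x',T)$ acting from $L^2(\Omega_0)$ to $L^2(\Omega_T)$, using $F_T(u)=G_{c,\tau}(\cdot,T)*_\x u$ from Theorem~\ref{theo:02}. Since $\Omega_0\subseteq B_r(\vO)$ has finite measure and $G_{c,\tau}(\cdot,T)\in L^2(\R^2)$, a Tonelli argument and translation invariance give the Hilbert--Schmidt estimate
\begin{equation*}
\begin{aligned}
   \int_{\Omega_T}\int_{\Omega_0}|G_{c,\tau}(\x-\x',T)|^2\,\d\x'\,\d\x
     \leq |\Omega_0|\,\|G_{c,\tau}(\cdot,T)\|_{L^2(\R^2)}^2 <\infty\,.
\end{aligned}
\end{equation*}
Hence $F_T$ is Hilbert--Schmidt, and therefore compact, which completes the proof.
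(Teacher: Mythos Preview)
Your proof is correct and follows essentially the same strategy as the paper: establish $\hat G_{c,\tau}(\cdot,T)\in L^2(\R^2)$ via the $|\k|^{-1/2}$ decay of each $\mbox{J}_0$ factor together with the observation that $T>2\,\tau$ forces $m\geq 2$, and then deduce compactness from the resulting square-integrable convolution kernel. The paper's version differs only cosmetically, splitting the radial integral at a finite cutoff $M$ and citing a textbook result (Theorem~8.15 in~\cite{Alt00}) for the compactness step rather than writing out the Hilbert--Schmidt estimate explicitly.
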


\begin{proof}
Without loss of generality we set $c=1$. Let $T=\tau_m + s$ with $m\geq 2$ and $s\in (0,\tau]$. 
From Theorem~\ref{theo:01} together with $|\mbox{J}_0(r)|\leq 1$ and the asymptotic 
behaviour~(\ref{asymJ0}) of $\mbox{J}_0$ (cf. appendix), 
we get for $N=2$:
\begin{equation*}
\begin{aligned}
    \|\hat G_{c,\tau}(\cdot,T)\|_{L^2(\R^2)}^2 
       &= 2\,\pi\,\int_0^\infty \mbox{J}_0^{2\,m}(r\,\tau)\,
              \mbox{J}_0^2(r\,s)\,r  \,  \d\,r \\
       &\leq   A
             + \frac{2^3}{\pi\,\tau\,s}\,
                \int_M^\infty \frac{1}{r^2} \,  \d\,r \,,
\end{aligned}
\end{equation*}
where 
$$
   A := 2\,\pi\,\int_0^{M} \mbox{J}_0^{2\,m}(r\,\tau)\,
              \mbox{J}_0^2(r\,s)\,r \,  \d\,r <\infty\,
$$
and $M>0$. 
Because of $\int_M^\infty 1/r^2\,\d\,r = 1/M$, we arrive at 
\begin{equation*}
\begin{aligned}
    \|\hat G_{c,\tau}(\cdot,T)\|_{L^2(\R^2)}^2 
        &=   A
             + \frac{2^3}{\pi\,\tau\,s}\,\frac{1}{M} 
         <\infty\,,
\end{aligned}
\end{equation*}
i.e. $\hat G_{c,\tau}(\cdot,T)$ lies in $L^2(\R^2)$. Consequently, 
$G_{c,\tau}(\cdot,T)$ lies in $L_c^2(\R^2)$. 
The compactness of the operator $F_T$ (for $N=2$ and $T>2\,\tau$) follows 
from Theorem 8.15 in~\cite{Alt00}.
\end{proof}

\begin{theo}
If $N\geq 3$ and $T>\tau$, then $G_{c,\tau}(\cdot,T)\in L_c^2(\R^3)$ and the 
operator $F_T:L^2(\Omega_0)\to L^2(\Omega_T)$ is compact.
\end{theo}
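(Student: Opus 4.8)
The plan is to follow the $N=2$ argument line by line: first establish $\hat G_{c,\tau}(\cdot,T)\in L^2(\R^N)$, transfer this to $G_{c,\tau}(\cdot,T)\in L^2(\R^N)$ by Parseval's formula, and then read off compactness of the resulting Hilbert--Schmidt operator. Without loss of generality set $c=1$ and write $T=\tau_m+s$ with $s\in(0,\tau]$; the hypothesis $T>\tau$ forces $m\geq 1$, since $m=0$ would give $T=s\leq\tau$. By Theorems~\ref{theo:01} and~\ref{theo:02} one has $\hat G_{c,\tau}(\k,T)=(2\,\pi)^{-N/2}\,\Upsilon_N(|\k|\,\tau)^m\,\Upsilon_N(|\k|\,s)$, so passing to polar coordinates (and absorbing the angular constant into $C_N$) reduces the whole estimate to the convergence of the radial integral
\begin{equation*}
\begin{aligned}
   \|\hat G_{c,\tau}(\cdot,T)\|_{L^2(\R^N)}^2
      = C_N\int_0^\infty \Upsilon_N(r\,\tau)^{2\,m}\,\Upsilon_N(r\,s)^{2}\,r^{N-1}\,\d r\,.
\end{aligned}
\end{equation*}

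The key analytic input is the decay of $\Upsilon_N$ recorded in the appendix (Theorem~\ref{theo:Upsilon2} and the Bessel asymptotics generalising~(\ref{asymJ0})): $|\Upsilon_N(t)|\leq 1$ for all $t\geq 0$, together with a bound $|\Upsilon_N(t)|\leq C\,t^{-(N-1)/2}$ valid for $t\geq t_0$, which reflects that $\Upsilon_N$ is, up to a power of $t$, a Bessel function of order $N/2-1$. Because $m\geq 1$ and $|\Upsilon_N|\leq 1$, I may bound $\Upsilon_N(r\,\tau)^{2\,m}\leq \Upsilon_N(r\,\tau)^{2}$ --- this is precisely the step that uses $T>\tau$. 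Splitting $\int_0^\infty=\int_0^M+\int_M^\infty$ with $M:=t_0/\min(\tau,s)$, the first part is finite since the integrand is bounded and continuous, while for $r\geq M$ both $r\,\tau$ and $r\,s$ exceed $t_0$, so
\begin{equation*}
\begin{aligned}
   \Upsilon_N(r\,\tau)^{2}\,\Upsilon_N(r\,s)^{2}\,r^{N-1}
      \leq C^4\,(\tau\,s)^{-(N-1)}\,r^{-(N-1)}\,.
\end{aligned}
\end{equation*}
As $N\geq 3$ gives $N-1\geq 2>1$, the tail $\int_M^\infty r^{-(N-1)}\,\d r$ converges, so $\|\hat G_{c,\tau}(\cdot,T)\|_{L^2(\R^N)}<\infty$. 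This also makes the threshold transparent: for $N=2$ the integrand only decays like $r^{-m}$ and one needs $m\geq 2$, i.e.\ $T>2\,\tau$, whereas the extra radial power available when $N\geq 3$ lets a single factor $m\geq 1$ suffice.

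By Parseval's formula $G_{c,\tau}(\cdot,T)\in L^2(\R^N)$, and since it has compact support by Corollary~\ref{coro} it lies in $L_c^2(\R^N)$. Finally $F_T$ acts as convolution with $G_{c,\tau}(\cdot,T)$ between functions supported in the bounded sets $\Omega_0$ and $\Omega_T$, so its kernel $(\x,\x')\mapsto G_{c,\tau}(\x-\x',T)$ satisfies $\int_{\Omega_T}\int_{\Omega_0}|G_{c,\tau}(\x-\x',T)|^2\,\d\x'\,\d\x\leq |\Omega_0|\,\|G_{c,\tau}(\cdot,T)\|_{L^2(\R^N)}^2<\infty$; hence $F_T$ is Hilbert--Schmidt and therefore compact, exactly as in the $N=2$ case via Theorem~8.15 in~\cite{Alt00}. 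The main obstacle I expect is pinning down the uniform decay bound $|\Upsilon_N(t)|\leq C\,t^{-(N-1)/2}$ for general $N\geq 3$; once that estimate is in hand, the remainder is a routine repetition of the two-dimensional argument.
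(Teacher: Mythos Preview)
Your proposal is correct and follows essentially the same route as the paper: reduce to a radial integral, use the decay estimate~(\ref{estUpsilon}) from Theorem~\ref{theo:Upsilon2} on the tail, and conclude compactness via the Hilbert--Schmidt criterion (Theorem~8.15 in~\cite{Alt00}). The only cosmetic difference is that the paper applies the decay bound to all $2(m+1)$ factors of $\Upsilon_N$, obtaining a tail integrand of order $r^{-m(N-1)}$, whereas you first invoke $|\Upsilon_N|\leq 1$ (which, while not stated in Theorem~\ref{theo:Upsilon2}, follows immediately from the integral representation in Lemma~\ref{lemm:hatGc}) to reduce to two factors and get $r^{-(N-1)}$; both tails are integrable precisely when $N\geq 3$ and $m\geq 1$.
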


\begin{proof}
Without loss of generality we set $c=1$. Let $T=\tau_m + s$ with $m\in\N$ and $s\in (0,\tau]$. 
From Theorem~\ref{theo:01} and the estimation~(\ref{estUpsilon}) in Theorem~\ref{theo:Upsilon2}, 
it follows for $N\geq3$: 
\begin{equation*}
\begin{aligned}
    \|\hat G_{c,\tau}(\cdot,T)\|_{L^2(\R^2)}^2 
       &= |S_1(\vO)|\,\int_0^\infty \Upsilon_N^{2\,m}(r\,s)\,\Upsilon_N^2(r\,s)
                                      \,r^{N-1}  \,  \d\,r \\
       &\leq |S_1(\vO)|\,\left( A 
                  + C_N^{2\,(m+1)}\,\int_M^\infty \frac{r^{N-1}}{(r\,s)^{(m+1)\,(N-1)}}
                                      \,  \d\,r \right)\,,
\end{aligned}
\end{equation*}
where 
$$
   A :=  |S_1(\vO)|\,\int_0^{M}  \Upsilon_N^{2\,m}(r\,s)\,\Upsilon_N^2(r\,s)
                                      \,r^{N-1} \d\,r <\infty\,
$$
and $M>0$ is sufficiently large. 
Because of $\int_M^\infty r^{-(N-1)}\,\d\,r = -\frac{1}{(N-2)}\,M^{-(N-2)}$, we end up with 
\begin{equation*}
\begin{aligned}
    \|\hat G_{c,\tau}(\cdot,T)\|_{L^2(\R^2)}^2 
        &=    |S_1(\vO)|\,\left( A
                +\frac{C_N^{2\,(m+1)}}{s^{(m+1)\,(N-1)}\,(N-2)\,M^{N-2}} 
                \right)<\infty\,,
\end{aligned}
\end{equation*}
i.e. $\hat G_{c,\tau}(\cdot,T)$ lies in $L^2(\R^2)$. As a consequence, 
$G_{c,\tau}(\cdot,T)$ lies in $L_c^2(\R^2)$. 
The compactness of the operator $F_T$ (for $N=2$ and $T>2\,\tau$) follows 
from Theorem 8.15 in~\cite{Alt00}. 
\end{proof}

From the Paley-Wiener-Schwartz Theorem (cf.~\cite{Hoe03}), it follows that the Moore-Penrose 
inverse $F^\dagger$ is uniquely defined by 
$$
F_T^\dagger := (F_\tau^\dagger)^m \, F_s^\dagger   \qquad\qquad 
 (\mbox{$T=\tau_m+s$, $s\in (0,\tau]$})
$$
with 
\begin{equation}\label{relFdagger}
\begin{aligned}
  \F\{F_s^\dagger(w)\}(\k)
     := \,\frac{\hat w(\k) }{\hat G(\k,s)}\, \chi_{\Omega_0(s)}(\k) 
  \qquad\mbox{for}\qquad  \k\in\Omega_0(s)\,.
\end{aligned}
\end{equation}
Therefore if the data lies in
$$
   \mathcal{R}(F_T)
       := \left\{w\in L_c^2(\R^N)\,\left|\, 
          \frac{\hat w}{\hat G(\cdot,\tau)^m\,\hat G(\cdot,s)} 
              \in L^2(\R^N) \right. \right\}\,,
$$ 
then the initial concentration $u$ can be estimated in principle. 
In contrast to standard diffusion $\hat G(\k,s)$ has countably infinite and discrete zeros  
(cf. Theorem~\ref{theo:inj}). 
Hence it follows:

\begin{coro}
A necessary condition for $w\in \mathcal{R}(F_T)$ is that $\hat w$ has a zero of order 
$\geq m$ at $k_*$ if $\hat G_{c,\tau}(\cdot,T)$ has a zero of order $m$ at $k_*$. 
\end{coro}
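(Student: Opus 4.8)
The plan is to read the required vanishing order straight off the explicit description of the range, working with $\hat G_{c,\tau}(\cdot,T)$ directly. To avoid a clash with the integer appearing in the definition of $\mathcal{R}(F_T)$, let me write $T=\tau_p+s$ with $p\in\N_0$, $s\in(0,\tau]$, and reserve $m$ for the order of the zero. By the identity $\hat G_{c,\tau}(\cdot,\tau_p+s)=\hat G_{c,\tau}(\cdot,\tau)^p\,\hat G_{c,\tau}(\cdot,s)$ established in the proof of Theorem~\ref{theo:inj}, the denominator in the definition of $\mathcal{R}(F_T)$ is exactly $\hat G_{c,\tau}(\cdot,T)$, so $w\in\mathcal{R}(F_T)$ is equivalent to $\hat w/\hat G_{c,\tau}(\cdot,T)\in L^2(\R^N)$. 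First I would observe that this forces square integrability of $|\hat w|^2/|\hat G_{c,\tau}(\cdot,T)|^2$ on every neighbourhood $U$ of $k_*$, and reduce the whole claim to this local integrability.

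Next I would exploit regularity and the radial structure. Since $w$ and $G_{c,\tau}(\cdot,T)$ have compact support, both $\hat w$ and $\hat G_{c,\tau}(\cdot,T)$ are restrictions to $\R^N$ of entire functions (Paley--Wiener), so orders of vanishing are well defined. By Theorems~\ref{theo:01} and~\ref{theo:02}, $\hat G_{c,\tau}(\k,T)=\mathrm{const}\cdot\Upsilon_N(|\k|\,c\,\tau)^p\,\Upsilon_N(|\k|\,c\,s)$ depends on $|\k|$ alone; hence its zero set through $k_*$ is the whole sphere $\{|\k|=|k_*|\}$, and because the radial profile vanishes to order exactly $m$ at $|k_*|$ one gets a two-sided bound $c_1\,\big||\k|-|k_*|\big|^{m}\le|\hat G_{c,\tau}(\k,T)|\le c_2\,\big||\k|-|k_*|\big|^{m}$ near $k_*$. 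Passing to radial--angular coordinates $\k=r\,\omega$ with $d\k=r^{N-1}\,dr\,d\omega$, square integrability over $U$ forces, for almost every $\omega$ near $k_*/|k_*|$, the transverse bound $\int \big|r-|k_*|\big|^{-2m}\,|\hat w(r\omega)|^2\,dr<\infty$. If $r\mapsto\hat w(r\omega)$ vanished to radial order $j<m$ at $r=|k_*|$, the integrand would behave like $\big|r-|k_*|\big|^{2(j-m)}$ with $2(m-j)\ge1$, which is not integrable; so $\hat w$ must vanish to radial order $\ge m$ at every such point of the sphere, and by analyticity of $\omega\mapsto\partial_r^i\hat w(r\omega)$ this persists for all $\omega$ in a neighbourhood.

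Finally I would upgrade this transverse information to a genuine order statement at $k_*$. Vanishing of $\hat w$ on an open piece of the sphere kills all tangential derivatives there, while the transverse estimate gives $\partial_r^i\hat w=0$ on that piece for $i\le m-1$; writing an arbitrary Cartesian derivative of order $<m$ in a radial--tangential frame at $k_*$ as a combination of operators $\partial_r^a\partial_{\mathrm{tan}}^{\beta}$ with $a+|\beta|<m$, hence $a\le m-1$, shows every derivative of $\hat w$ of order $<m$ vanishes at $k_*$, i.e. $\hat w$ has a zero of order $\ge m$ there. I expect the main obstacle to be exactly this multidimensional bookkeeping: for $N\ge2$ the zeros of $\hat G_{c,\tau}(\cdot,T)$ form codimension-one spheres rather than isolated points, so the clean one-dimensional integrability picture only controls the radial direction, and one must combine it with analyticity and the tangential vanishing to recover the full order $\ge m$. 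Degenerate situations (for instance $\Upsilon_N'$ vanishing simultaneously, which raises the radial order) are absorbed by the same two-sided comparison, so they do not affect the conclusion.
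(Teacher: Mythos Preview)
Your argument is correct, and in fact it supplies considerably more than the paper does: the paper offers no proof at all for this corollary, treating it as an immediate consequence of the definition of $\mathcal{R}(F_T)$ (the text simply reads ``Hence it follows:'' before the statement). Your observation that the $m$ in the definition of $\mathcal{R}(F_T)$ and the $m$ denoting the order of the zero are different objects is well taken; the paper silently overloads the symbol.

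Where your treatment genuinely adds value is the multidimensional bookkeeping you flag at the end. The paper's implicit one--line reasoning (``$\hat w/\hat G\in L^2$, so $\hat w$ must match the zeros of $\hat G$'') is morally the 1D picture along the radial direction; it does not address why vanishing to radial order $\ge m$ along every ray through the sphere $\{|\k|=|k_*|\}$ forces all Cartesian derivatives of order $<m$ to vanish at $k_*$. Your route---Fubini in radial--angular coordinates to get the transverse integrability constraint, analyticity from Paley--Wiener to pass from a.e.\ $\omega$ to all $\omega$, and then the radial--tangential decomposition of Cartesian derivatives---closes exactly this gap. A slightly slicker way to phrase the final step is to note that radial vanishing to order $\ge m$ on a neighbourhood of the sphere gives a factorisation $\hat w(\k)=(|\k|-|k_*|)^m\,h(\k)$ with $h$ smooth near $k_*$, whence Leibniz immediately kills every derivative of order $<m$ at $k_*$; but your frame argument is equivalent.
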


According to Theorem~\ref{theo:Upsilon2} for $N\in\N$ we have  
$$
        \Upsilon_N(t) \asymp t^{(N-1)/2}  \qquad\mbox{for}\qquad t\to\infty\,
$$
and thus the envelope of $\k\mapsto\hat G_{c,\tau}(\k,T)$ 
decreases as
$$
        \k\mapsto   a_T\,|\k|^{(\lfloor T/\tau\rfloor+1)\,(N-1)/2}\,
        \qquad\mbox{for}\qquad |\k|\to\infty\,,
$$
where 
$$
      a_T := (c\,\tau)^{\lfloor T/\tau\rfloor\,(N-1)/2}\,
         \left( c\,( T-\lfloor T/\tau\rfloor \,\tau ) \right)^{(N-1)/2}\,.
$$
Here $\lfloor a\rfloor$ denotes the largest integer $\leq a$ (and   
$m\equiv\lfloor T/\tau\rfloor$, $s\equiv T-\tau\,\lfloor T/\tau\rfloor$).
Hence we get: 

\begin{coro}
If $N=2$ and $T>2\,\tau$ or $N\geq 3$ and $T>\tau$, then the inverse problem~(\ref{invprob}) 
is ill-posed, but \emph{not} exponentially ill-posed.
\end{coro}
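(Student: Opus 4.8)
The plan is to read off both halves of the statement from the spectral behaviour of the compact operator $F_T$, whose symbol is the Fourier transform $\hat G_{c,\tau}(\cdot,T)$ (cf.\ Theorems~\ref{theo:01} and~\ref{theo:FTsa}). First I would establish ill-posedness. Under either hypothesis the preceding compactness theorems show that $F_T:L^2(\Omega_0)\to L^2(\Omega_T)$ is compact, while Theorem~\ref{theo:inj} shows it is injective. Since $L^2(\Omega_0)$ is infinite-dimensional, injectivity forces the range of $F_T$ to be infinite-dimensional as well, so its singular values $(\sigma_n)_{n\in\N}$ form a sequence tending to $0$. A compact operator with non-closed range has an unbounded Moore--Penrose inverse $F_T^\dagger$, hence the reconstruction $u=F_T^\dagger w$ does not depend continuously on $w$ and the inverse problem~(\ref{invprob}) is ill-posed.

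Next I would quantify the decay of $\sigma_n$ in order to rule out exponential ill-posedness. Because $F_T$ acts on compactly supported data as the Fourier multiplier $\F\{F_T u\}=\hat G_{c,\tau}(\cdot,T)\,\hat u$, the degree of ill-posedness is governed by the decay of the symbol $\hat G_{c,\tau}(\cdot,T)$, equivalently by the error-amplification factor $1/\hat G_{c,\tau}(\cdot,T)$ appearing in~(\ref{relFdagger}). From the asymptotics of $\Upsilon_N$ in Theorem~\ref{theo:Upsilon2} together with the factorisation $\hat G_{c,\tau}(\k,T)=\hat G_{c,\tau}(\k,\tau)^m\,\hat G_{c,\tau}(\k,s)$, the envelope of $\k\mapsto\hat G_{c,\tau}(\k,T)$ recorded just above decreases only polynomially, of algebraic order $(\lfloor T/\tau\rfloor+1)(N-1)/2$. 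In particular, along the sequence of frequencies at the peaks between consecutive zeros, $|\hat G_{c,\tau}(\k,T)|$ stays above a polynomially decaying lower bound and never becomes exponentially small; consequently the envelope of $\F\{F_T u\}$ does not decrease exponentially, which is precisely the criterion announced in the introduction. Thus the singular values decay only polynomially and the problem is mildly, not exponentially, ill-posed. This is in sharp contrast to standard diffusion, whose symbol $\hat G_{\infty,0}(\k,T)=(2\,\pi)^{-N/2}\,e^{-D_0\,|\k|^2\,T}$ from~(\ref{Ginftykt}) decays like a Gaussian and produces exponential ill-posedness.

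The main obstacle is to pass rigorously from the polynomial decay of the symbol to a matching polynomial lower bound on the singular values $\sigma_n$ of the restricted operator on the bounded domain $\Omega_0$. On all of $\R^N$ the convolution operator is a multiplier whose spectrum is exactly the range of its symbol, so the link is transparent; on the bounded domain this is only heuristic, and one must either invoke a result relating the decay of a Fourier multiplier to the singular-value asymptotics of its restriction to a bounded set, or argue directly from~(\ref{relFdagger}) that $1/\hat G_{c,\tau}(\k,T)$ grows at most polynomially on the support of the reconstruction. Because the symbol has countably infinitely many zeros (Theorem~\ref{theo:inj}), care is additionally needed to restrict attention to data in $\mathcal{R}(F_T)$, on which $F_T^\dagger$ is defined and the polynomial amplification estimate is meaningful.
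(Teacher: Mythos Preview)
Your proposal is correct and follows essentially the same approach as the paper: ill-posedness from compactness plus injectivity, and non-exponential ill-posedness from the polynomial decay of the envelope of $\hat G_{c,\tau}(\cdot,T)$ established via Theorem~\ref{theo:Upsilon2}.

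The difference is one of scope rather than method. The paper treats this corollary informally: it simply records the asymptotic $\Upsilon_N(t)\asymp t^{-(N-1)/2}$, multiplies out the factors to obtain the polynomial envelope $|\k|^{-(\lfloor T/\tau\rfloor+1)(N-1)/2}$, and invokes the criterion announced in the introduction, namely that ``the envelope of the Fourier transform of $F_T(u)$ does not decrease exponentially fast. In this sense the inverse problem is \emph{not} exponentially ill-posed.'' In other words, the paper \emph{defines} non-exponential ill-posedness via the Fourier-symbol envelope, so the obstacle you flag in your third paragraph---passing rigorously from symbol decay to singular-value asymptotics on the bounded domain $\Omega_0$---is deliberately sidestepped. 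Your attempt to connect this to actual singular-value decay is the more standard and more honest formulation, and your identification of the gap (zeros of the symbol, restriction to a bounded domain) is accurate; the paper simply does not claim that stronger statement.
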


We end this section with a remark about the \emph{technique of time reversal}.

\begin{rema}\label{rema:timereversal}
For  the special case $T\in (0,\tau]$, it follows from Corollary~\ref{coro:waveeq} that the 
inverse problem~(\ref{invprob}) depends continuously on the data if the additional data 
$w_2:=\frac{\partial F_T(u)}{\partial t}(\cdot,T)$ is known. 
More precisely, the solution can be calculated by 
\begin{equation*}
\begin{aligned}
   \hat u(\k) 
     =   \chi_{A}(\k)\, \frac{\hat w(\k)}{\Upsilon_N(|\k|\,c\,T)} 
       + \chi_{B}(\k)\, \frac{\hat w_2(\k)}{|\k|\,c\,\Upsilon_N'(|\k|\,c\,T)} \,,
\end{aligned}
\end{equation*} 
where $\chi_A$ is defined as in~(\ref{char}) and $\{A,\,B\}$ is a covering of $\R^N$ such that 
$A\cap B=\emptyset$ and $\Upsilon_N(|\cdot|\,T)$ and 
$\Upsilon_N'(|\cdot|\,T)$ do not vanish on $A$ and $B$, respectively. Here we have used the fact 
that the zeros of $\Upsilon_N$ and $\Upsilon_N'$ are of order one which follows from 
Corollary~\ref{coro:Upsilon} and Theorem~\ref{theo:Upsilon2} in the appendix. 
\end{rema}

\section{Simulation of the inverse problem}
\label{sec-Sim}

\subsection{Simulation of data via a particle method}
\label{subsec-SimData}

In order to avoid an \emph{inverse crime} we calculate the synthetic data for the 
inverse problem by a \emph{particle method} (cf.~\cite{GrKnZuCa04}). 
One of the advantages of a particle method (as long as no mass flows over the 
boundary) is that the total mass is conserved. 
For simplicity we focus on the $2D-$case and drop the subscripts $c$ and $\tau$ 
in $G_{c,\tau}$ and $v_{c,\tau}$. 

\subsubsection*{The particle method}

The initial distribution $u$ is approximated by an \emph{image}, i.e. a piecewise 
constant function with quadratic   pixels of length $\Delta x$. At time instant 
$t=\tau_{n-1}$ ($n\in\N$) the mass concentrated in a pixel separates in $M$ parts 
and each part propagates on a stright line with constant speed $c$ in a randomly 
chosen direction $\vd$ during the time period $\tau$. Here the directions are  
chosen with equal probability out of the set 
$$
   \{ A(\varphi)\,\e_1\,|\,\varphi=0,\,\pi/M,\,\ldots,\,(M-1)\,\pi/M \}\,,
$$
where $\e_1:=(1,0)^T$ and $A(\varphi)$ denotes the matrix that rotates the argument 
about the angle $\varphi$ in positive direction. 
This kind of data simulation allows that more than one "particle'' go in the 
same direction such that a special type of noise is included in the simulated  
data.  
To each image pixel is then associated the number of all particles that lie 
within the pixel multiplied by $1/M$.

\subsubsection*{Noise}

In order to avoid an inverse crime we perturbed the length of the radius 
$R(\tau)=c\,\tau$ by $\pm 0.25\%$ of its original length (uniformly distributed 
perturbation). In addition, uniformly distributed $L^2-$noise with positive 
mean value were added to the simulated data. As noise level we have chosen 
$\delta=0.005$ ($0.5\%$).

\subsubsection*{Convergence of the particle method}

In the following we denote by $G[M](\x,t)$ the simulated distribution with 
initial distribution 
\begin{equation*}
\begin{aligned}
   \delta[M](\x) 
        := \left\{
           \begin{array}{ll}
              1 & \mbox{if $\max(|x|,|y|)<\frac{\Delta x}{2}$}\\
              0 & \mbox{elsewhere}
           \end{array}
           \right.  \,.
\end{aligned}
\end{equation*}
From analysis it is known that 
\begin{equation}\label{convprop}
\begin{aligned}
     \delta[M](\x)\stackrel{M\to\infty}{\longrightarrow} \delta(\x) 
\quad\mbox{and}\quad 
     G[M](\x,t)\stackrel{M\to\infty}{\longrightarrow} G(\x,t)
\quad \mbox{in}\quad \mathcal{D}'(\R^2)\,.
\end{aligned}
\end{equation}
Here $G$ denotes the Green function of causal diffusion (cf. 
Definition~\ref{defCD}) and $\mathcal{D}'(\R^2)$ denotes the space 
of distributions on $\R^2$. We now show that the algorithm described above for 
an initial distribution $u$ provides us with an approximate solution of $F_T(u)$, 
were $F_T$ denotes the forward operator~(\ref{FTv1}).

\begin{theo}
Let $u\in L_c^1(\R^2)$ and $v(\cdot,t):=G(\cdot,t)*_\x u$. For 
$$
   v[M](\cdot,t) := G[M](\cdot,t)*_\x u  \qquad(t>0)\,,
$$
it follows that
$$
    v[M](\cdot,t) \stackrel{M\to\infty}{\longrightarrow} v(\cdot,t) 
   \qquad \mbox{in}\qquad L^1(\R^2)\,.
$$
\end{theo}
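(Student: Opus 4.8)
The plan is to reduce the claim to a smooth initial datum and then run a density argument, the two enabling facts being conservation of mass and causality. Write $\mu_t^{[M]}$ and $\mu_t$ for the positive measures associated with $G[M](\cdot,t)$ and $G(\cdot,t)$; that $\mu_t$ is a genuine positive finite measure follows from Theorem~\ref{theo:01} (for $t\in(0,\tau]$ it is the normalised surface measure on $S_{c\,t}(\vO)$) together with Theorem~\ref{theo:02} (a finite convolution of such measures), while $\mu_t^{[M]}$ is positive by construction of the particle method. By conservation of mass together with~(\ref{convprop}), the total variations $\|\mu_t^{[M]}\|_{TV}$ are bounded uniformly in $M$, say by $m_0<\infty$, and $\|\mu_t\|_{TV}\le m_0$ as well; by causality (Corollary~\ref{coro}) both families of supports lie in the fixed ball $\overline{B_{c\,t}(\vO)}$.

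First I would record the uniform bounds. By Young's inequality for the convolution of a finite measure with an $L^1$ function, the operators $T_M u:=\mu_t^{[M]}*_\x u$ and $T u:=\mu_t*_\x u$ satisfy $\|T_M u\|_{L^1}\le m_0\,\|u\|_{L^1}$ and $\|T u\|_{L^1}\le m_0\,\|u\|_{L^1}$. Moreover $\supp(T_M u)$ and $\supp(T u)$ are contained in the fixed compact set $K:=\supp(u)+\overline{B_{c\,t}(\vO)}$, so all the functions in question live in $L^1(K)$.

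Next I would settle the smooth case $u\in C_c^\infty(\R^2)$. For each fixed $\x$ the map $\y\mapsto u(\x-\y)$ is a test function, so the distributional convergence of~(\ref{convprop}) yields
\begin{equation*}
\begin{aligned}
   (T_M u)(\x)=\int u(\x-\y)\,\d\mu_t^{[M]}(\y)\stackrel{M\to\infty}{\longrightarrow}\int u(\x-\y)\,\d\mu_t(\y)=(T u)(\x)
\end{aligned}
\end{equation*}
pointwise in $\x$. Since $|(T_M u)(\x)|\le m_0\,\|u\|_\infty$ and every $T_M u$ vanishes off $K$, the dominating function $m_0\,\|u\|_\infty\,\chi_K$ lies in $L^1$, and dominated convergence upgrades pointwise to $L^1$ convergence, i.e. $\|T_M u-T u\|_{L^1}\to0$.

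Finally, for general $u\in L_c^1(\R^2)$ I would approximate by $u_n\in C_c^\infty(\R^2)$ with $\|u-u_n\|_{L^1}\to0$ and supports in a fixed compact neighbourhood of $\supp u$ (so that $K$ may be chosen independently of $n$). The three-term estimate
\begin{equation*}
\begin{aligned}
   \|T_M u-T u\|_{L^1}\le\|T_M(u-u_n)\|_{L^1}+\|T_M u_n-T u_n\|_{L^1}+\|T(u_n-u)\|_{L^1}
\end{aligned}
\end{equation*}
bounds its first and third terms by $m_0\,\|u-u_n\|_{L^1}$ via the uniform operator bound and sends the middle term to $0$ by the smooth case; letting first $M\to\infty$ and then $n\to\infty$ gives the claim. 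The main obstacle is conceptual rather than computational: distributional convergence alone is far too weak to control a convolution in $L^1$, and the whole argument hinges on coupling it with the two uniform controls it does \emph{not} by itself supply — the total-mass bound from conservation of mass and the common compact support from causality — which together make dominated convergence and then the density argument applicable.
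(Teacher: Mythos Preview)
Your proof is correct and follows essentially the same route as the paper: reduce to $u\in C_c^\infty$ by density, use the distributional convergence~(\ref{convprop}) to obtain pointwise convergence of the convolutions, and then pass to $L^1$ using the common compact support. The only notable difference is that you make the density step rigorous by establishing the uniform $L^1\to L^1$ bounds via Young's inequality and mass conservation and then running an explicit $3\varepsilon$ argument, whereas the paper simply invokes density and says ``without loss of generality $u\in C_0^\infty$''; your version is the more careful of the two, but the underlying argument is the same.
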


\begin{proof}
Since the space $C_0^\infty(\R^2)$ is dense in $L^1(\R^2)$, we assume
without loss of generality that $u\in C_0^\infty(\R^2)$. We have 
\begin{equation*}
\begin{aligned}
  \| v[M](\cdot,t)-v(\cdot,t)\|_{L^1}
  = \int_{\R^2} \left| f[M](\x) \right| \d\x  
\end{aligned}
\end{equation*}
with 
\begin{equation}\label{f[M]}
\begin{aligned}
 f[M](\x) = \int_{\R^2} [G[M](\x',t)-G(\x',t)]\, u(\x-\x')\, \d\x'\,. 
\end{aligned}
\end{equation}
The function $f[M]$ is an element of $C_0^\infty(\R^2)$, since $G[M](\cdot,t)-G(\cdot,t)$ 
has compact support and $u\in C_0^\infty(\R^2)$ (cf. Proposition~32.1.1 
in~\cite{GasWit99}). 
From ~(\ref{convprop}) together with $u\in C_0^\infty(\R^2)$, it follows that the 
right hand side of~(\ref{f[M]}) converges pointwise and uniformly to zero on compact sets. 
This together with the fact that $f[M]$ has compact support implies 
$$
     \|v[M](\cdot,t)-v(\cdot,t)\|_{L^1}\stackrel{M\to\infty}{\longrightarrow} 0\,.
$$
As was to be shown. 
\end{proof}

\subsection{Numerical solution of the backwards diffusion problem}
\label{subsec-SimInv}

For solving the inverse problem we use the \emph{Landweber method} 
(cf. e.g.~\cite{EngHanNeu96,Kir96,Isa98,Lo01}).
Since $F_T:L_c^2(\R^N)\to L_c^2(\R^N)$ is a positive, linear and self-adjoint operator 
(cf. Theorem~\ref{theo:FTsa}) the Landweber method reads as 
follows\footnote{For simplicity, we write $u_n$, $w^\delta$ instead of $u_n[M]$, 
$w^\delta[M]$.}
\begin{equation*}
\begin{aligned}
     u_{n+1} = P\{u_n -\omega\,F_T\,[F_T(u_n)- w^\delta]\}\,,
\end{aligned}
\end{equation*}
where $\omega$ denotes the \emph{relaxation parameter}, $w^\delta$ denotes the noisy data 
and $P$ denotes the orthogonal projection onto  
$$
    \mathcal{R}(P)=\{u\in L^2\,|\,u\geq 0\}\,.
$$ 
The use of the projection operator guarantees that the solution is a positive (mass) 
distribution. As parameter choice rule we use the 
\emph{discrepancy principle}, i.e. the iteration is stopped as soon as 
\begin{equation*}
\begin{aligned}
     \|F_T(u_{n+1})-w^\delta\|_{L^2} < \eta\,\delta \qquad\quad (\eta\geq 2)
\end{aligned}
\end{equation*}
is true.  The relaxation parameter was chosen as
$$
   \omega := \frac{1}{4}\,\frac{\|F_T(u_n)- w^\delta\|_{L^2}^2}
                               {\|F_T\,[F_T(u_n)- w^\delta]\|_{L^2}^2}\,.
$$

In order to avoid an inverse crime, the data $w^\delta$ is calculated by the particle 
method ($M=65$) described above and the calculation of the Forward operator $F_T$ in each 
iteration step is performed by integrals over circles. Each circle is discretized 
by $50$ points. 

We now present two simulations of the backwards diffusion problem for $T=\tau$ and 
$T=3\,\tau$, respectively.

\begin{exam}\label{exam:invpr}
Consider the initial distribution shown in Fig.~\ref{fig:6}.  
This image consists of $682^2$ quadratic pixels of length $\Delta x:=1/681$.  
As characteristic parameters of causal diffusion we have chosen $c=1$ and 
$\tau=8\,\Delta x/c$. Hence the characteristic radius $R(\tau)$ is $8$ 
times $\Delta x$. As described above,  the length of the radius $R(\tau)$ was 
randomly perturbed by $\pm 0.25\%$ of its original length. 
The data acquisition is performed at time $T=\tau$ and $0.5\%$ 
(uniformly distributed) $L^2-$noise was added to the simulated data.     
The numerical results are visualized in Fig.~\ref{fig:6} and Fig.~\ref{fig:7}. 
As expected, the estimation of large structures is much better than for smaller 
ones. Since the data acquisition is performed at a quite early time the 
estimation works well.\footnote{Cf. Remark~\ref{rema:timereversal}.} 
The Discrepancy principle stops optimally for $\eta=9.4$ after $6$ steps. 
\end{exam}

\begin{exam}
We consider the inverse problem from Example~\ref{exam:invpr} again, but for the  
later data acquisition time $T=3\,\tau$. The Discrepancy principle stops optimally 
for $\eta=5.9$ after $6$ steps. For this situation the forward operator is compact. 
As Fig.~\ref{fig:8} shows it is not possible to restore the edges of the question 
mark, since the data are to much ``smooth''.  This result reflects the ill-posedness 
of the problem. 
\end{exam}

\section{Appendix}

\subsection*{The delta distribution}

We use the following notation for the \emph{delta distributions}. Let $N\in\N$ and 
$\x\in\R^N$. Then $\delta(\x)$ satisfies 
\begin{equation*}
\begin{aligned}
   \int_{\R^N} f(\x)\,\delta(\x-\x_0)\,\d\,\x = f(\x_0)  
  \qquad\mbox{for}\qquad f\in C_c(\R^N)\,.
\end{aligned}
\end{equation*}
Here $C_c(\R^N)$ denotes the set of continuous funtions with compact support. Since 
$\delta(\x)$ has compact support, $C_c(\R^N)$ can be replaced by $C(\R^N)$.
In this notation the dirac measure $\mu_\delta$ (cf.~\cite{La93}) reads as follows
$$
    \mu_\delta(A)  
        =  \int_{\R^N} \chi_{A}(\x) \delta(\x)\,\d\,\x  
        = \left\{
           \begin{array}{ll}
              1 & \mbox{if $\vO\in A$}\\
              0 & \mbox{elsewhere}
           \end{array}
           \right.  \,,
$$
where 
\begin{equation}\label{char}
 \mbox{$\chi_{A}(\x)$ denotes the characteristic function of the set $A\subseteq \R^N$.}
\end{equation} 
In case $N=1$ we use the notation $\delta(x)$ instead 
of $\delta(\x)$.

\subsection*{The Fourier transform}

We use the following notation for the Fourier transformation: 
\begin{equation*}
\begin{aligned}
   &\hat f(\k)   := \F\{f\}(\k)     
       := (2\,\pi)^{-N/2}\, \int_{\R^N} e^{\i\,\k\cdot\x} f(\x) \,\d\x \\
   &\check g(\x) := \F^{-1}\{g\}(\x) 
       :=  (2\,\pi)^{-N/2}\, \int_{\R^N} e^{-\i\,\k\cdot\x} g(\k) \,\d\k\, 
\end{aligned}
\end{equation*}
for $f,\,g\in L^1(\R^N)$. Here $\k\in\R^N$ is called the wave vector. In this notation the 
convolution theorem reads as follows
\begin{equation}\label{convth}
\begin{aligned}
  \F\{f_1\}\,\F\{f_2\} 
     = (2\,\pi)^{-N/2}\,\F\{f_1 *_\x f_2\} \qquad f_1,\,f_2\in L^1(\R^N)\,. 
\end{aligned}
\end{equation}

\subsection*{Special functions}

We define the function $\mbox{sinc}:\R\to\R$ as the continuous extension of 
$x\in\R\backslash \{0\}\mapsto sin(x)/x$ and recall that the \emph{Bessel function} 
of first kind and order zero has the series representation (cf.~\cite{Heu91})
\begin{equation}\label{J0}
    \mbox{J}_0(x) 
        =  \sum_{j=0}^\infty (-1)^j\,\left(\frac{x^j}{2^j\, j!}\right)^2 \,.
\end{equation}

In order to derive an analytic representation of the Fourier transform of the 
Green function of causal diffusion, we need the following two lemmata. 

\begin{lemm}\label{lemm:03}
For $N\in\N$ with $N>1$ and $j\in\N$, let  
\begin{equation*}
\begin{aligned}
    I(N,j) := \int_{-\pi/2}^{\pi/2} \sin^j(\varphi)\cos^{N-2}(\varphi) \,\d\varphi \,.
 \end{aligned}
\end{equation*}
If $j$ is odd, then $I(N,j)=0$ and if $j$ is even, then
\begin{equation}\label{INj}
\begin{aligned}
  \frac{I(N,j)}{I(N,0)} 
   = \frac{1\,\cdot 3\cdot 5 \cdots (j-1)}{N\cdot (N+2)\cdot (N+4)\cdots (N+j-2)}  \, \,.
\end{aligned}
\end{equation}
\end{lemm}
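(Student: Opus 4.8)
The plan is to split into the two parity cases and, for the even case, to establish a two-step recursion in $j$ that telescopes into the stated product.

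First I would dispose of the odd case by symmetry. When $j$ is odd, the map $\varphi\mapsto \sin^j(\varphi)\cos^{N-2}(\varphi)$ is an odd function, since $\sin^j$ is odd and $\cos^{N-2}$ is even, while the domain $[-\pi/2,\pi/2]$ is symmetric about the origin; hence $I(N,j)=0$ at once.

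For the even case the core step is to derive the recursion
$$
    \frac{I(N,j)}{I(N,j-2)} = \frac{j-1}{N+j-2}\,, \qquad j\geq 2\,.
$$
I would obtain this from two identities. On one hand, writing $\sin^j=\sin^{j-2}(1-\cos^2)$ gives $I(N,j)=I(N,j-2)-J$, where $J:=\int_{-\pi/2}^{\pi/2}\sin^{j-2}(\varphi)\cos^{N}(\varphi)\,\d\varphi$. On the other hand, integrating the derivative of $\varphi\mapsto\sin^{j-1}(\varphi)\cos^{N-1}(\varphi)$ over $[-\pi/2,\pi/2]$ and using that the boundary term vanishes — this is exactly where the hypothesis $N>1$ enters, since it forces $\cos^{N-1}$ to vanish at $\pm\pi/2$ — yields $(j-1)\,J=(N-1)\,I(N,j)$, so that $J=\tfrac{N-1}{j-1}\,I(N,j)$. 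Substituting this into $I(N,j)=I(N,j-2)-J$ and solving for $I(N,j)$ produces the recursion above.

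Finally I would telescope. Since $j$ is even, iterating the recursion from $i=2$ up through $i=j$ gives
$$
    \frac{I(N,j)}{I(N,0)}
      = \prod_{\substack{i=2\\ i\ \text{even}}}^{j} \frac{i-1}{N+i-2}
      = \frac{1\cdot 3\cdot 5\cdots(j-1)}{N\cdot(N+2)\cdot(N+4)\cdots(N+j-2)}\,,
$$
which is the claimed formula~(\ref{INj}). The step I expect to be the main obstacle is the integration by parts producing $(j-1)\,J=(N-1)\,I(N,j)$: one must select the correct antiderivative $\sin^{j-1}\cos^{N-1}$ so that differentiation reproduces both $J$ and $I(N,j)$ with the right coefficients, and one must verify that the boundary contribution vanishes, which is precisely guaranteed by the assumption $N>1$.
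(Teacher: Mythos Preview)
Your proof is correct and follows essentially the same route as the paper: the odd case is handled by the same parity argument, and for the even case both you and the paper derive the two-step recursion $I(N,j)=\frac{j-1}{N+j-2}\,I(N,j-2)$ via integration by parts and then iterate it (the paper phrases the iteration as induction, you as a telescoping product). The only difference is cosmetic: the paper simply asserts that integration by parts gives the recursion, whereas you spell out the mechanism by splitting $\sin^j=\sin^{j-2}(1-\cos^2)$ and integrating the derivative of $\sin^{j-1}\cos^{N-1}$, correctly noting that $N>1$ is what kills the boundary term.
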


\begin{proof}
If $j$ is odd, then $\sin^j(\varphi)\cos^{N-2}(\varphi)$ is an odd function and thus 
$I(N,j)$ vanishes. 

Now let $j$ be even.  We perform a proof by induction. 
\begin{itemize}
\item [i)]   Let $j=2$. Integration by Parts yields 
             \begin{equation*}
             \begin{aligned}
               I(N,2) 
                   = \frac{1}{N}\, \int_{-\pi/2}^{\pi/2} \cos^{N-2}(\varphi) \,\d\varphi
                   = \frac{I(N,0)}{N}\,.
             \end{aligned}
             \end{equation*}

\item [ii)]  We now assume the induction assumption for $j=m-2$, i.e. 
             \begin{equation*}
             \begin{aligned}
                I(N,m-2) 
                  = \frac{1\,\cdot 3\cdot 5 \cdots (m-3)}{N\cdot (N+2)\cdot (N+4)\cdots (N+m-4)}  
                                \, I(N,0)\,
             \end{aligned}
             \end{equation*}
             and prove~(\ref{INj}) for $j=m$. 
             Integration by Parts yields 
             $$
               I(N,m) = \frac{m-1}{m+N-2}\, I(N,m-2)\,.
             $$
             Employing the induction assumption to this result leads to~(\ref{INj}) 
             with $j=m$. This concludes the proof.
\end{itemize}
\end{proof}

\begin{lemm}\label{lemm:hatGc}
Let 
\begin{equation}\label{defUpsilon}
\begin{aligned}
    \Upsilon_N(t) 
            :=  \sum_{j=0}^\infty (-1)^j\cdot a_{2\,j}\cdot t^{2\,j} 
        \qquad \mbox{for} \qquad t\in (0,\infty) 
\end{aligned}
\end{equation}
with $a_0=1$ and 
\begin{equation*}
\begin{aligned}
    a_{2\,j} = \frac{1}{(2\,j)!}\,\frac{1\cdot 3\cdot 5 \cdots (2\,j-1)}
                                      {N\cdot (N+2)\cdot(N+4) \cdots (N+2\,j-2)}
\qquad\quad (j\in\N)\,.
\end{aligned}
\end{equation*} 
The series~(\ref{defUpsilon}) is absolutely convergent and 
\begin{equation*}
\begin{aligned}
         \int_{S_1(\vO)} \frac{\delta(\x+c\,s\,\y)}{|S_1(\vO)|}\,\d \sigma(\y)
            =  \frac{\F^{-1}\{ \Upsilon_N(|\cdot|\,c\,s) \}(\x)}{(2\,\pi)^{N/2}}
        \qquad \mbox{for} \qquad \x\in\R^N,\, s\in (0,\tau]\,.
\end{aligned}
\end{equation*}
\end{lemm}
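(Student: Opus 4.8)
The plan is to establish the two assertions separately, treating the convergence of the series first and then computing the Fourier transform of the left-hand side by a direct reduction to Lemma~\ref{lemm:03}.

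For the absolute convergence of~(\ref{defUpsilon}) I would apply the ratio test to the terms $a_{2j}\,t^{2j}$. Writing $1\cdot 3\cdots(2j-1)=(2j)!/(2^j\,j!)$ one sees that $a_{2j}=1/\bigl(2^j\,j!\,N(N+2)\cdots(N+2j-2)\bigr)$, so the ratio of consecutive terms equals $t^2/\bigl(2(j+1)(N+2j)\bigr)\to 0$ as $j\to\infty$. Hence the series converges absolutely for every $t\in\R$ and $\Upsilon_N$ is an entire function; in particular the term-by-term manipulations used below are licit.

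For the identity I would denote by $T$ the compactly supported distribution on the left-hand side, since the asserted equality is equivalent to $\hat T(\k)=(2\,\pi)^{-N/2}\,\Upsilon_N(|\k|\,c\,s)$ after applying $\F^{-1}$. Using the definition of $\F$ from the appendix, interchanging the $\x$-integration with the surface integral, and using the sifting property of $\delta(\x+c\,s\,\y)$ at $\x=-c\,s\,\y$ (all justified because $T$ has compact support), I would obtain
\begin{equation*}
\begin{aligned}
   \hat T(\k) = (2\,\pi)^{-N/2}\,\frac{1}{|S_1(\vO)|}\int_{S_1(\vO)} e^{-\i\,c\,s\,\k\cdot\y}\,\d\sigma(\y)\,.
\end{aligned}
\end{equation*}
Thus everything reduces to showing that the spherical average on the right equals $\Upsilon_N(|\k|\,c\,s)$. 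To evaluate it I would exploit rotational invariance to assume $\k=|\k|\,\e_1$, so that $\k\cdot\y=|\k|\,y_1$, set $r:=|\k|\,c\,s$, expand $e^{-\i\,r\,y_1}$ as a power series, and integrate term by term over the compact unit sphere. The odd powers of $y_1$ integrate to zero by symmetry, and for the even powers I would use the slicing $y_1=\sin\varphi$, under which the surface measure factorizes as $\cos^{N-2}\varphi\,\d\varphi\,\d\sigma_{S^{N-2}}$; this reduces the $2l$-th moment to $|S^{N-2}|\,I(N,2l)$ and the total mass to $|S^{N-2}|\,I(N,0)$, so the normalized moment is exactly $I(N,2l)/I(N,0)$. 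Invoking Lemma~\ref{lemm:03} and formula~(\ref{INj}) for this ratio and using $(-\i)^{2l}=(-1)^l$, the series collapses to $\sum_{l\geq 0}(-1)^l\,a_{2l}\,r^{2l}=\Upsilon_N(r)$, which is the claim.

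The routine parts are the ratio test and the bookkeeping that matches the coefficients term by term. The one step that needs genuine care is the moment computation, specifically the correct measure decomposition $\d\sigma=\cos^{N-2}\varphi\,\d\varphi\,\d\sigma_{S^{N-2}}$ that makes Lemma~\ref{lemm:03} applicable, together with the (here immediate) justification of the term-by-term integration coming from uniform convergence of the exponential series on the compact sphere.
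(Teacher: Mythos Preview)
Your approach is essentially identical to the paper's: ratio test for absolute convergence, Fourier transform of the shifted delta giving the spherical average of $e^{-\i r y_1}$, rotational invariance, term-by-term expansion, and reduction of the even moments to $I(N,2l)/I(N,0)$ via Lemma~\ref{lemm:03}. The only point you gloss over is that your slicing $\d\sigma=\cos^{N-2}\varphi\,\d\varphi\,\d\sigma_{S^{N-2}}$ and Lemma~\ref{lemm:03} both require $N\geq 2$, so the case $N=1$ (where $S_1(\vO)=\{\pm 1\}$ and the average is simply $\cos r$) must be checked separately, as the paper does.
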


\begin{proof}
That the series representation~(\ref{defUpsilon}) converges absolutely follows at once from  
the Quotient Criterion. 

Let $\x,\,\k\in\R^N$. From  
$$
   \F\{\delta(\x + a_1\,\y)\}(\k) 
      = (2\,\pi)^{-N/2}\, e^{-\i\,a_1\,(\k\cdot\y)}
      \qquad\quad \mbox{($a_1>0$ constant)}
$$  
and 
\begin{equation*}
\begin{aligned}
    \int_{S_1(\vO)} e^{a_2\,(\k\cdot \y)} \d \sigma(\y)  
       =  \int_{S_1(\vO)} e^{a_2\,|\k|\,(\e_1\cdot \y)} \d \sigma(\y) 
   \qquad\quad \mbox{($a_2\in\C$ constant)}, 
\end{aligned}
\end{equation*}
it follows that
\begin{equation*}
\begin{aligned}
   \hat g(\k,s):=
 \F\left\{
         \int_{S_1(\vO)} \frac{\delta(\cdot+c\,s\,\y)}{|S_1(\vO)|}\,\d \sigma(\y)
          \right\}(\k)
    = \int_{S_1(\vO)} \frac{e^{-\i\, (\e_1\cdot \y) \, |\k|\,c\,s}}{(2\,\pi)^{N/2}\,|S_1(\vO)|}
                      \,\d \sigma(\y)   
\end{aligned}
\end{equation*}
for $s\in [0,\tau]$ and $\k\in\R^N$. 
Instead of $\e_1$ we can also use anyone in $\{\e_2,\,\e_3\,,\ldots,\,\e_N\}$. 
Expanding the exponential function yields 
\begin{equation}\label{repGc01}
\begin{aligned}
   (2\,\pi)^{N/2}\,\hat g(\k,s) 
    =  \sum_{j=0}^\infty (-1)^j\,\frac{(|\k|\,c\,s)^{2\,j}}{(2\,j)!}\,d_{2\,j} \,.
\end{aligned}
\end{equation}
with 
\begin{equation*}
\begin{aligned}
   d_j  :=  \int_{S_1(\vO)}  \frac{(\e_1\cdot \y)^j }{|S_1(\vO)|}\,\d \sigma(\y)     
  \qquad \mbox{for}\qquad j\in\N\,.
\end{aligned}
\end{equation*}
We see at once that $d_j=0$ if $j$ is odd and $d_0=1$.  
For the convenience of the reader, we consider the cases $N=1$ and $N>1$ separately. 
\begin{itemize}
\item [a)] For $N=1$ we have $\e_1\equiv 1$
      $$
        \int_{S_1(\vO)} \d \sigma(\y) \equiv \int_\R (\delta(y-1) + \delta(y+1))\,\d\,y
        \qquad\mbox{and}\qquad  |S_1(\vO)|=2 \,,
      $$
      and thus
      \begin{equation*}
      \begin{aligned}
          d_j = \frac{1^j + (-1)^j }{2} 
           = \left\{
           \begin{array}{ll}
              1 & \mbox{if $j$ is even}\\
              0 & \mbox{if $j$ is odd}
           \end{array}
           \right.  \,.
      \end{aligned}
      \end{equation*}
      Inserting this into the series representation yields
      \begin{equation*}
      \begin{aligned}
         (2\,\pi)^{N/2}\,\hat g(\k,s) 
             =  \sum_{j=0}^\infty (-1)^j\,\frac{(|k|\,c\,s)^{2\,j}}{(2\,j)!} \,. 
      \end{aligned}
      \end{equation*}

\item [b)] Let $N>1$. For the derivation of the series representation we use the following 
           $N-$dimensional orthogonal coordinate system (cf.~\cite{StWi93})
      $$
       (r,\varphi_1,\,\ldots\,,\varphi_{N-1}) 
         \in [0,\infty)\times (-\pi,\pi)\times (-\pi/2,\pi/2)^{N-2} 
     $$ 
     defined by 
     \begin{equation*}
     \begin{aligned}
        &x_N    = r\,\sin(\varphi_{N-1})\,,  \\
        &x_{N-1} = r\,\cos(\varphi_{N-1})\,\sin(\varphi_{N-2})\,,  \\
        &x_{N-2} = r\,\cos(\varphi_{N-1})\,\cos(\varphi_{N-2})\,\sin(\varphi_{N-3})\,,  \\
        &x_{N-3} = r\,\cos(\varphi_{N-1})\,\cos(\varphi_{N-2})\,\cos(\varphi_{N-3})
                                   \,\sin(\varphi_{N-4})\,,  \\
      & \qquad \qquad\qquad\qquad \vdots \\ 
%        &x_3    = r\,\cos(\varphi_{N-1}) \,\cdots\,\cos(\varphi_3)\,\cos(\varphi_{N-2})\,, \\
        &x_2    = r\,\cos(\varphi_{N-1}) \,\cdots\,\cos(\varphi_2)\,\sin(\varphi_1)\,,  \\
        &x_1    = r\,\cos(\varphi_{N-1}) \,\cdots\,\cos(\varphi_2)\,\cos(\varphi_1)\,,  
     \end{aligned}
     \end{equation*}
     with surface measure 
     \begin{equation*}
     \begin{aligned}
        \d\sigma  
         = r^{N-1}\,\d\varphi_1\, \prod_{l=2}^{N-1} \cos^{l-1}(\varphi_l) \,\d\varphi_l \,.
     \end{aligned}
     \end{equation*}
     Since $\e_N\cdot \e_r =  \sin(\varphi_{N-1})$ and 
     \begin{equation*}
     \begin{aligned}
      |S_1(\vO)|   
       = \int_{S_1(\vO)} \d\varphi_1\, \prod_{l=2}^{N-1} \cos^{l-1}(\varphi_l) \,\d\varphi_l \,,
     \end{aligned}
     \end{equation*}
     we obtain
     \begin{equation*}
     \begin{aligned}
       d_j  
       &= \int_{S_1(\vO)} \d\varphi_1\, \prod_{l=2}^{N-2} \cos^{l-1}(\varphi_l) \,\d\varphi_l 
              \,\frac{\sin^j(\varphi_{N-1})}{|S_1(\vO)|}\,\cos^{N-2}(\varphi_{N-1}) 
                       \,\d\varphi_{N-1}\\
      &= \frac{I(N,j)}{I(N,0)}
     \end{aligned}
     \end{equation*}
     with $I(N,j)$ defined as in Lemma~\ref{lemm:03}. 
     From this and Lemma~\ref{lemm:03}, we obtain $d_{2\,j-1}=0$, $d_0=1$ and 
     \begin{equation*}
     \begin{aligned}
       d_{2\,j}
         = \frac{1\,\cdot 3\cdot 5 \cdots (2\,j-1)}{N\cdot (N+2)\cdot (N+4)\cdots (N+2\,j-2)} 
          \qquad \mbox{for}\qquad j>0\,.
     \end{aligned}
     \end{equation*}
     Inserting this into the series~(\ref{repGc01}) yields the claimed series representation. 
\end{itemize}
This concludes the proof. 
\end{proof}

The following corollary follows form Lemma~\ref{lemm:hatGc}.

\begin{coro}\label{coro:Upsilon}
For $N\in\N$. The function $\Upsilon_N$ defined as in~(\ref{defUpsilon}) satisfies the 
problem 
\begin{equation*}
\begin{aligned}
    \Upsilon_N''(t) + \frac{(N-1)}{t}\,\Upsilon_N'(t) + \Upsilon_N(t) = 0 
   \qquad\quad t>0\,, 
\end{aligned}
\end{equation*}
with initial conditions
\begin{equation*}
\begin{aligned}
    \Upsilon_N(0+) = 1  \qquad\mbox{and}\qquad  \Upsilon_N'(0+) = 0\,.
\end{aligned}
\end{equation*}
Here $t=0$ is a \emph{regular singular point}\footnote{Cf. e.g.~\cite{Heu91}.} 
of the ordinary differential equation. 
\end{coro}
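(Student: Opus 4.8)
The plan is to verify the claim directly from the power-series definition~(\ref{defUpsilon}), exploiting that the series is not merely absolutely convergent (as already established in Lemma~\ref{lemm:hatGc}) but in fact defines an entire function: the ratio of successive coefficients
\begin{equation*}
  \frac{a_{2\,(j+1)}}{a_{2\,j}} = \frac{1}{(2\,j+2)\,(N+2\,j)}
\end{equation*}
tends to $0$, so the radius of convergence is infinite and term-by-term differentiation is legitimate for every $t>0$. Writing $b_j := (-1)^j\,a_{2\,j}$, so that $\Upsilon_N(t)=\sum_{j\geq 0} b_j\,t^{2\,j}$, I would differentiate twice, noting that $\Upsilon_N'$ begins at order $t^1$, which makes $\frac{1}{t}\,\Upsilon_N'$ again a genuine power series with no negative powers.

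First I would assemble the left-hand side of the differential equation. Combining
\begin{equation*}
  \Upsilon_N''(t) + \frac{N-1}{t}\,\Upsilon_N'(t)
    = \sum_{j\geq 1} 2\,j\,(2\,j+N-2)\,b_j\,t^{2\,j-2}
\end{equation*}
and re-indexing $j\mapsto j+1$ turns this into $\sum_{k\geq 0} 2\,(k+1)\,(2\,k+N)\,b_{k+1}\,t^{2\,k}$, while $\Upsilon_N(t)=\sum_{k\geq 0} b_k\,t^{2\,k}$. Hence the ODE is satisfied precisely when the coefficients obey the two-term recurrence
\begin{equation*}
  b_{k+1} = -\frac{b_k}{2\,(k+1)\,(2\,k+N)} \qquad (k\in\N_0)\,.
\end{equation*}

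The crux of the argument --- and the one computation worth carrying out carefully --- is to confirm this recurrence against the explicit coefficients. Since $b_{k+1}/b_k = -\,a_{2\,(k+1)}/a_{2\,k}$, the coefficient ratio displayed above gives exactly $-1/\bigl(2\,(k+1)\,(2\,k+N)\bigr)$, so the recurrence holds term by term and the differential equation follows.

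The initial conditions and the nature of the singularity are then immediate. Evaluating the series at $t=0+$ gives $\Upsilon_N(0+)=b_0=a_0=1$, and since $\Upsilon_N'(t)$ has lowest-order term proportional to $t$, one has $\Upsilon_N'(0+)=0$. Finally, rewriting the equation in the standard form $\Upsilon_N''+p(t)\,\Upsilon_N'+q(t)\,\Upsilon_N=0$ with $p(t)=(N-1)/t$ and $q(t)=1$, both $t\,p(t)=N-1$ and $t^2\,q(t)=t^2$ are analytic at the origin, so $t=0$ is a regular singular point in the sense of Frobenius. I expect no real obstacle beyond bookkeeping: the only place where care is needed is keeping the index shifts consistent so that the recurrence is read off correctly, but the factorial structure of $a_{2\,j}$ makes the ratio collapse cleanly.
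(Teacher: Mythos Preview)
Your argument is correct and is precisely the natural verification that the paper leaves implicit: the paper states only that the corollary ``follows from Lemma~\ref{lemm:hatGc}'' and gives no further detail, so your term-by-term differentiation and coefficient recurrence is exactly what one would supply to flesh out that one-line assertion. The ratio $a_{2(j+1)}/a_{2j}=1/\bigl((2j+2)(N+2j)\bigr)$ is computed correctly, the index shift is clean, and the initial conditions and regular-singular-point check are immediate as you say.
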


The following theorem enables us to specify the space Fourier transfrom of the Green 
function of causal diffusion for every dimension $N$ and to prove some compactness 
results for the forward operator of causal diffusion. 

\begin{theo}\label{theo:Upsilon2}
Let $N\in\N$ with $N\geq 3$ and $t>0$. The function $\Upsilon_N$ defined as in~(\ref{defUpsilon}) 
satisfies 
\begin{equation}\label{I}
\begin{aligned}
    \Upsilon_N(t) = -\frac{(N-2)}{t}\,\Upsilon_{N-2}'(t)   
\end{aligned}
\end{equation}
with 
\begin{equation*}
\begin{aligned}
    \Upsilon_1(t) = \cos(t)
\qquad\mbox{and}\qquad 
    \Upsilon_2(t) = \mbox{J}_0(t)  \,.
\end{aligned}
\end{equation*} 
Here $\mbox{J}_0$ denotes the \emph{Bessel function} of first kind and order zero. 
Moreover, we have
\begin{equation}\label{estUpsilon}
\begin{aligned}
   |\Upsilon_N(t)| \leq C_N\,t^{-(N-1)/2}  \qquad\mbox{for sufficiently large $t$}
\end{aligned}
\end{equation} 
and some constant $C_N>0$. 
\end{theo}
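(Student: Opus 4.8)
The plan is to prove the three assertions — the reduction formula~(\ref{I}), the two base cases, and the decay estimate~(\ref{estUpsilon}) — separately, all starting from the explicit power series~(\ref{defUpsilon}), whose absolute convergence on all of $\R$ is already supplied by Lemma~\ref{lemm:hatGc} and which may therefore be differentiated term by term. The base cases are the easiest. For $N=1$ the denominator $N(N+2)\cdots(N+2j-2)=1\cdot 3\cdots(2j-1)$ cancels the numerator, so $a_{2j}=1/(2j)!$ and $\Upsilon_1(t)=\sum_{j}(-1)^j t^{2j}/(2j)!=\cos(t)$. For $N=2$ the denominator equals $2\cdot 4\cdots(2j)=2^j\,j!$, and writing $1\cdot 3\cdots(2j-1)=(2j)!/(2^j j!)$ gives $a_{2j}=1/(2^j j!)^2$, whence $\Upsilon_2(t)=\sum_j(-1)^j\bigl(t^j/(2^j j!)\bigr)^2=\mbox{J}_0(t)$ by~(\ref{J0}).

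For the reduction formula~(\ref{I}) I would differentiate the series for $\Upsilon_{N-2}$, form $-\frac{(N-2)}{t}\,\Upsilon_{N-2}'(t)$, and shift the summation index so as to compare the coefficient of $t^{2k}$ with that of $\Upsilon_N$. The identity then collapses to the single coefficient relation $a_{2k}^{(N)}=2(k+1)(N-2)\,a_{2(k+1)}^{(N-2)}$ (the superscript recording the dimension), verified by cancelling the factor $N-2$ and using $2(k+1)/(2k+2)!=1/(2k+1)!$ together with $\bigl(1\cdot 3\cdots(2k+1)\bigr)/(2k+1)!=\bigl(1\cdot 3\cdots(2k-1)\bigr)/(2k)!$; this is a routine factorial simplification. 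As a check, $N=3$ gives $\Upsilon_3(t)=-\frac{1}{t}\Upsilon_1'(t)=\sin(t)/t=\mbox{sinc}(t)$, in agreement with~(\ref{Upsilon123}).

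The decay estimate~(\ref{estUpsilon}) I would prove by induction in steps of two, the odd and even chains being anchored at $\Upsilon_1=\cos$ and $\Upsilon_2=\mbox{J}_0$. Here lies the main obstacle: formula~(\ref{I}) lowers the dimension through the \emph{derivative} $\Upsilon_{N-2}'$, so an induction on $\Upsilon_N$ alone does not close. The fix is to carry the strengthened hypothesis that \emph{both} $|\Upsilon_N(t)|$ and $|\Upsilon_N'(t)|$ are $\leq C_N\,t^{-(N-1)/2}$ for large $t$. The base cases $N=1,2$ hold because $\cos,\sin$ are bounded and because $\mbox{J}_0$ decays like $t^{-1/2}$ by~(\ref{asymJ0}), while $\mbox{J}_0'=-\mbox{J}_1$ decays at the same rate (standard Bessel asymptotics). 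In the step, the bound on $\Upsilon_N$ is immediate from~(\ref{I}), since $|\Upsilon_N(t)|\leq \frac{N-2}{t}|\Upsilon_{N-2}'(t)|\leq (N-2)C_{N-2}\,t^{-(N-1)/2}$; for the derivative I would differentiate~(\ref{I}) and eliminate $\Upsilon_{N-2}''$ via the differential equation of Corollary~\ref{coro:Upsilon}, obtaining the first-order identity $\Upsilon_N'(t)=\frac{(N-2)^2}{t^2}\,\Upsilon_{N-2}'(t)+\frac{N-2}{t}\,\Upsilon_{N-2}(t)$, whose two terms are of orders $t^{-(N+1)/2}$ and $t^{-(N-1)/2}$ under the inductive bounds, so the derivative estimate propagates with the function estimate. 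Alternatively, the substitution $\Upsilon_N(t)=t^{-(N-2)/2}w(t)$ turns that same differential equation into Bessel's equation of order $(N-2)/2$, identifying $\Upsilon_N$ (up to the constant fixed by $\Upsilon_N(0+)=1$) with $t^{-(N-2)/2}\mbox{J}_{(N-2)/2}(t)$, from which~(\ref{estUpsilon}) follows at once from $|\mbox{J}_\nu(t)|\leq C\,t^{-1/2}$; I prefer the recursion, however, as it needs only the order-zero and order-one asymptotics already available through~(\ref{asymJ0}).
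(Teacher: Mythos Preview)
Your argument is correct and, for the base cases and for formula~(\ref{I}), identical to the paper's: both simply read off the coefficients from the series~(\ref{defUpsilon}). For the decay estimate the two proofs are also built on the same identity, only arrived at and packaged differently. The paper writes down, again directly from the series, the companion relation
\[
\frac{(t^{N-2}\Upsilon_N(t))'}{(N-2)\,t^{N-3}}=\Upsilon_{N-2}(t),
\]
i.e.\ $\Upsilon_N'(t)=\frac{N-2}{t}\bigl(\Upsilon_{N-2}(t)-\Upsilon_N(t)\bigr)$, and then runs a two-term induction on $|\Upsilon_N|$ alone: from~(\ref{I}) with index $N+2$ and the displayed relation with index $N$ one gets $|\Upsilon_{N+2}|\leq \frac{N(N-2)}{t^2}\bigl(|\Upsilon_N|+|\Upsilon_{N-2}|\bigr)$. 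Your version instead strengthens the hypothesis to include $|\Upsilon_{N-2}'|$ and derives $\Upsilon_N'=\frac{(N-2)^2}{t^2}\Upsilon_{N-2}'+\frac{N-2}{t}\Upsilon_{N-2}$ by differentiating~(\ref{I}) and eliminating $\Upsilon_{N-2}''$ via the ODE of Corollary~\ref{coro:Upsilon}; substituting~(\ref{I}) back into your formula gives exactly the paper's relation, so the two identities are equivalent. The practical difference is only bookkeeping: the paper carries $(\Upsilon_N,\Upsilon_{N-2})$ through the induction, you carry $(\Upsilon_{N-2},\Upsilon_{N-2}')$. Your packaging has the small advantage that the anchors $N=1,2$ genuinely suffice, whereas the paper's two-step recursion strictly speaking also needs $N=3,4$ checked separately before the step $|\Upsilon_{N+2}|\leq\frac{N(N-2)}{t^2}(\cdots)$ becomes available.
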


\begin{proof}
The relation between $\Upsilon_N$ and $\Upsilon_{N-2}'$ follows at once from the series 
representation~(\ref{defUpsilon}). 
Moreover, 
\begin{itemize}
\item [a)] if $N=1$, then 
           \begin{equation*}
           \begin{aligned}
                  a_{2\,j}
                       = \frac{1}{(2\,j)!}\,\frac{1\,\cdot 3\cdot 5 \cdots (2\,j-1)}
                                                 {1\cdot 3\cdot 5\cdots (2\,j-1)} 
                       = \frac{1}{(2\,j)!}\,
           \end{aligned}
           \end{equation*}
           and thus $\Upsilon(t)=\cos(t)$ and 

\item [b)] if $N=2$, then 
           \begin{equation*}
           \begin{aligned}
                  a_{2\,j}
                     = \frac{1}{(2\,j)!}\,\frac{1\,\cdot 3\cdot 5 \cdots (2\,j-1)}
                                                 {2\cdot 4\cdot 6\cdots (2\,j)} 
                     = \frac{1}{[2\cdot 4\cdot 6\cdots (2\,j)]^2} 
                     =  \frac{1}{[2^j\, (j!)]^2}\,,
           \end{aligned}
           \end{equation*}
           which implies $\Upsilon(t) = \mbox{J}_0(t)$ 
           (cf.~(\ref{J0}) in the Appendix and~\cite{Heu91}). 

\end{itemize} 
In order to proof the estimation we use 
\begin{equation}\label{II}
\begin{aligned}
    \frac{(t^{N-2}\,\Upsilon_N(t))'}{(N-2)\,t^{N-3}} = \Upsilon_{N-2}(t)  \,,
\end{aligned}
\end{equation}
which follows from the series representation~(\ref{defUpsilon}). We perform a proof by 
induction. Since $cos(t)$ is bounded and the Bessel function $\mbox{J}_0(t)$ satisfies 
the asymptotic behavior (cf.~\cite{BroSem79})
\begin{equation}\label{asymJ0}
   \mbox{J}_0(t) \asymp \sqrt{\frac{2}{\pi\,t}}\,\cos\left(t-\frac{\pi}{4}\right)
           \qquad\mbox{for}\qquad t\to\infty\,,
\end{equation}
the estimation holds for $N=1$ and $N=2$. We assume that the estimation~(\ref{estUpsilon}) 
holds and prove 
\begin{equation*}
\begin{aligned}
   |\Upsilon_{N+2}(t)| \leq C_{N+2}\,t^{-(N+1)/2}  \qquad\mbox{for sufficiently large $t$.}
\end{aligned}
\end{equation*} 
From~(\ref{I}) and~(\ref{II}) we get
\begin{equation*}
\begin{aligned}
   |\Upsilon_{N+2}(t)| 
       &=    \left| \frac{N}{t}\,\Upsilon_N'(t) \right| 
        \leq \frac{N\,(N-2)}{t^2}\, ( |\Upsilon_N(t)| + |\Upsilon_{N-2}(t)|) \\
       &\leq \frac{N\,(N-2)\,(C_N+C_{N-2})}{t^{(N+1)/2}}\,,
\end{aligned}
\end{equation*}
which proves the claim.  
\end{proof}

% --- sinc  ---
\begin{figure}[!ht]
\begin{center}
\includegraphics[height=4.5cm,angle=0]{./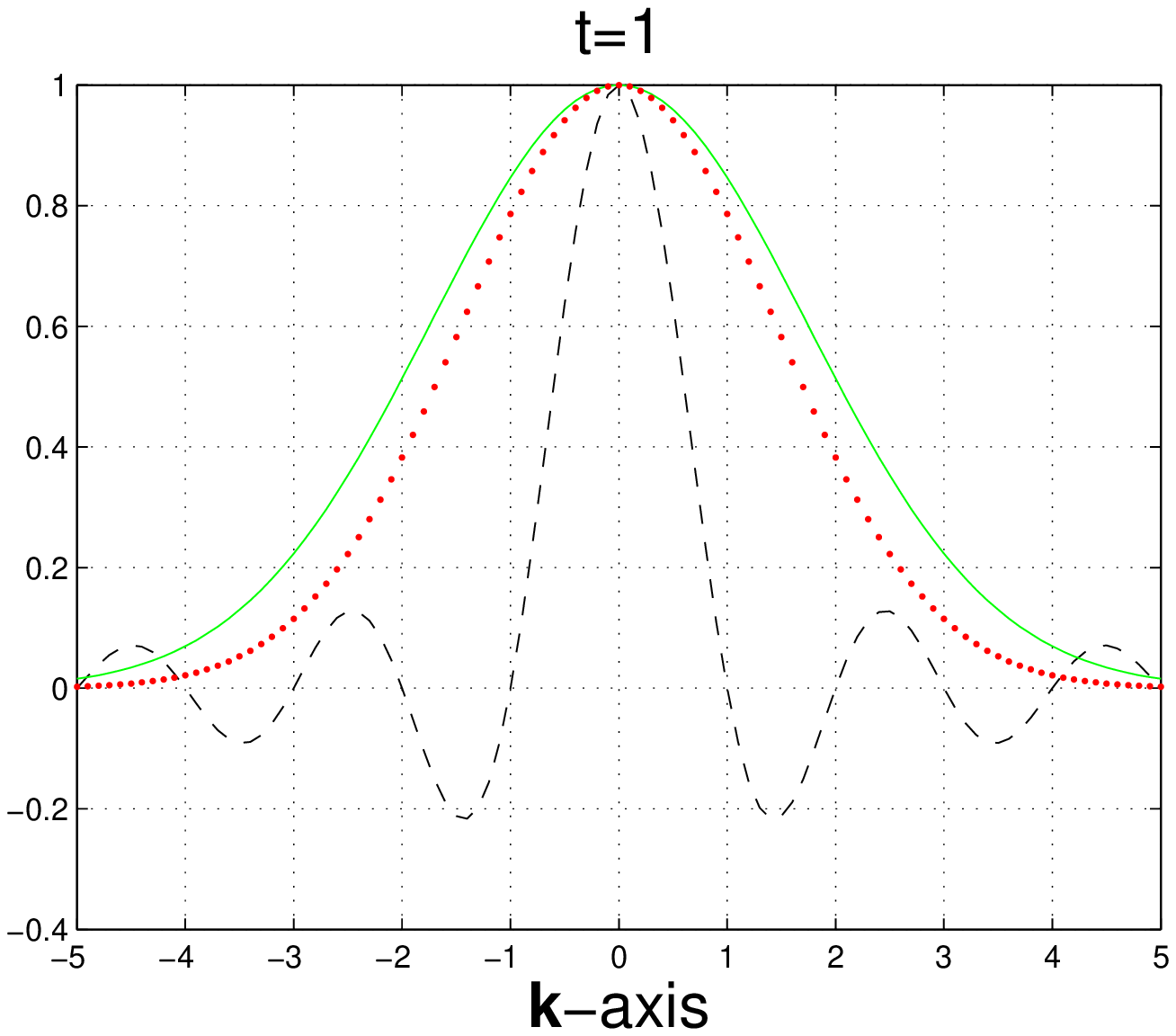}
\includegraphics[height=4.5cm,angle=0]{./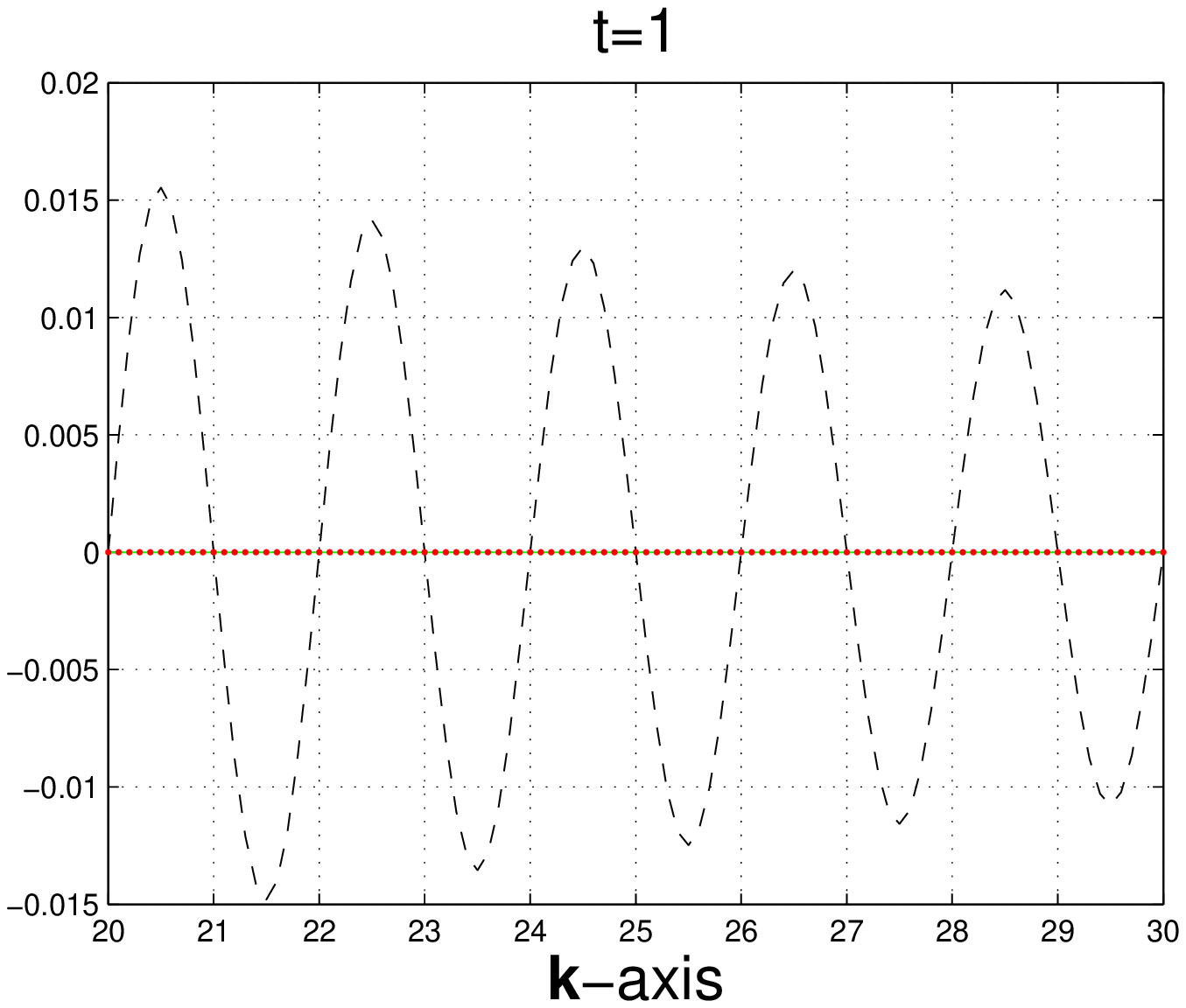}\\
\includegraphics[height=4.5cm,angle=0]{./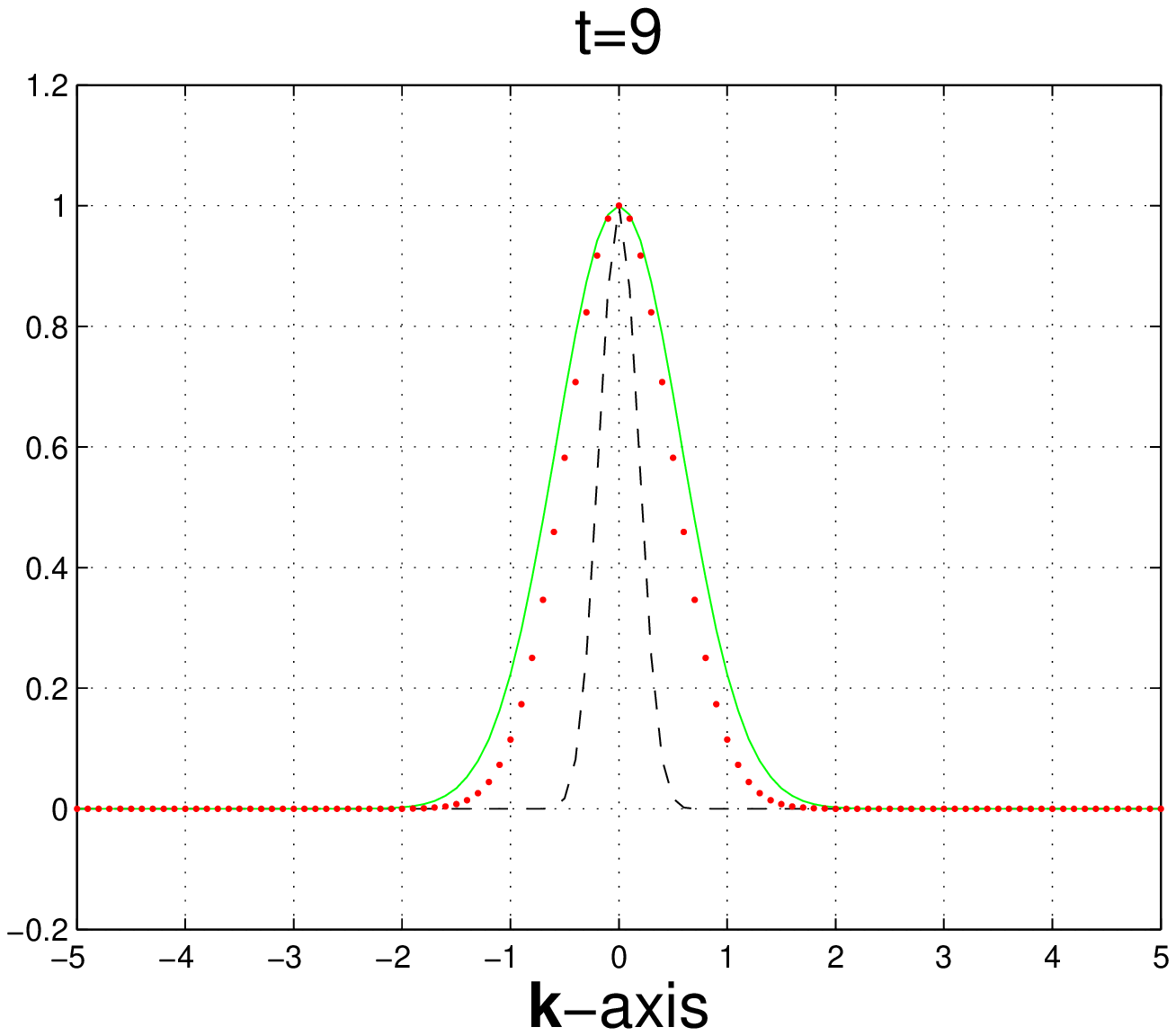}
\includegraphics[height=4.5cm,angle=0]{./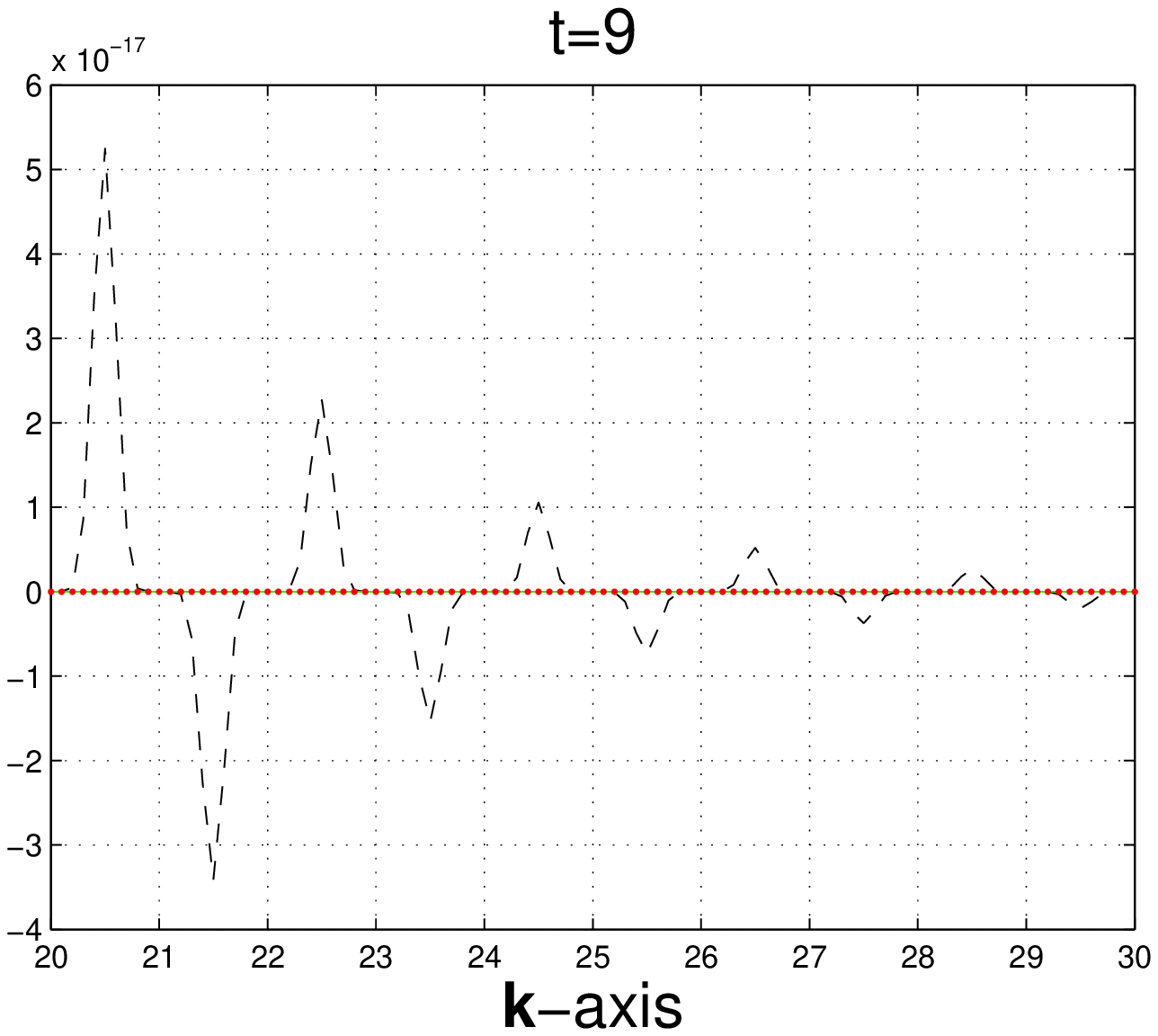}
\end{center}
\caption{Comparison of 
         $|\k|\mapsto (2\,\pi)^{3/2}\,\hat G_\#(\k,t)$ (dotted red line),
         $|\k|\mapsto (2\,\pi)^{3/2}\,\hat G_{\infty,0}(\k,t)$ (solid green line) and 
         $|\k|\mapsto (2\,\pi)^{3/2}\,\hat G_{c,\tau}(\k,t)$ (dashed black line)\,. 
         Here $c=1$, $\tau=1$ and $t\in\{\tau,\,9\,\tau\}$.}
\label{fig:image02} 
\end{figure}

% --- 2D scale space 0) ---
\begin{figure}[!ht]
\begin{center}
\includegraphics[height=5.0cm,angle=0]{./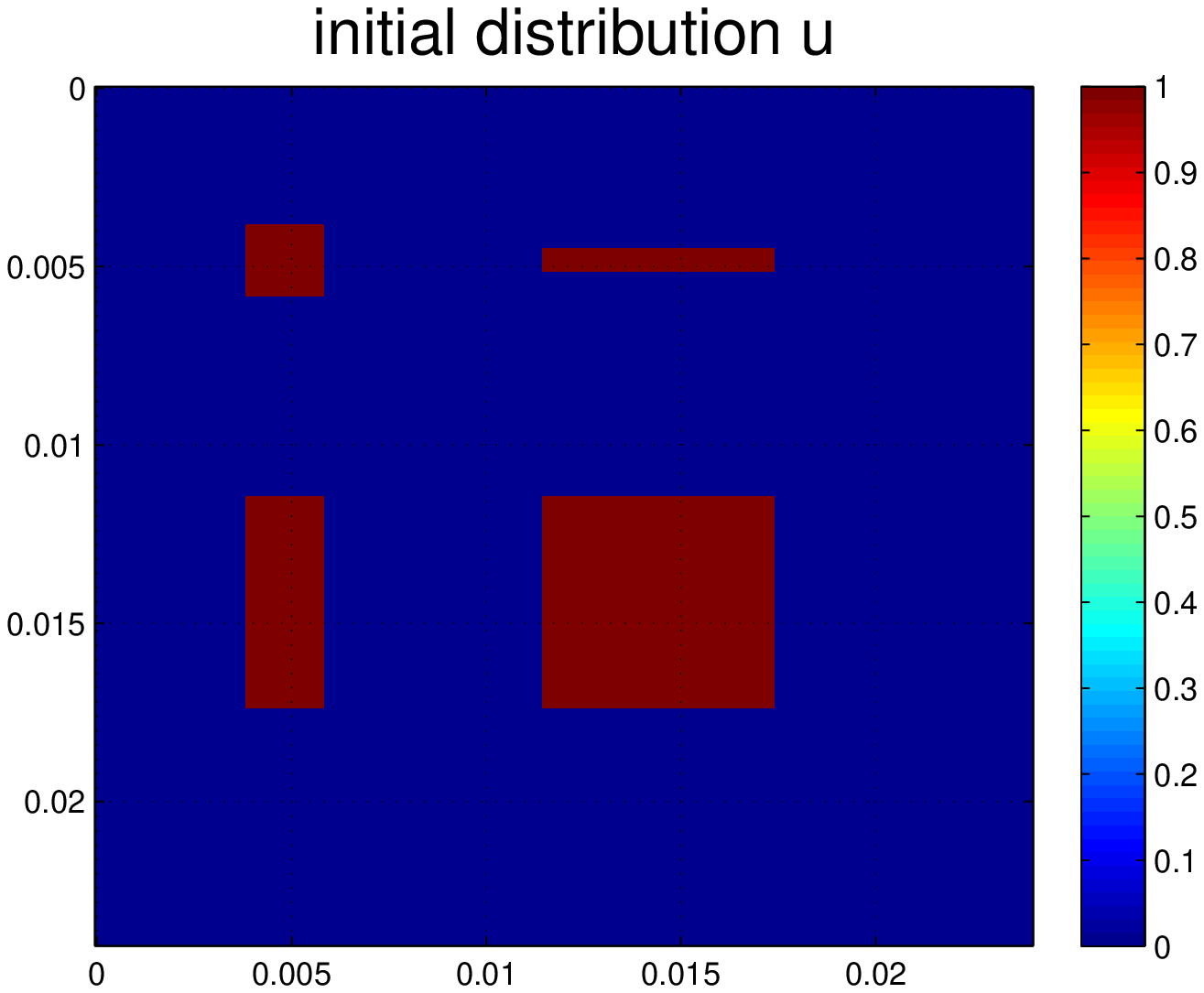}
\includegraphics[height=4.7cm,angle=0]{./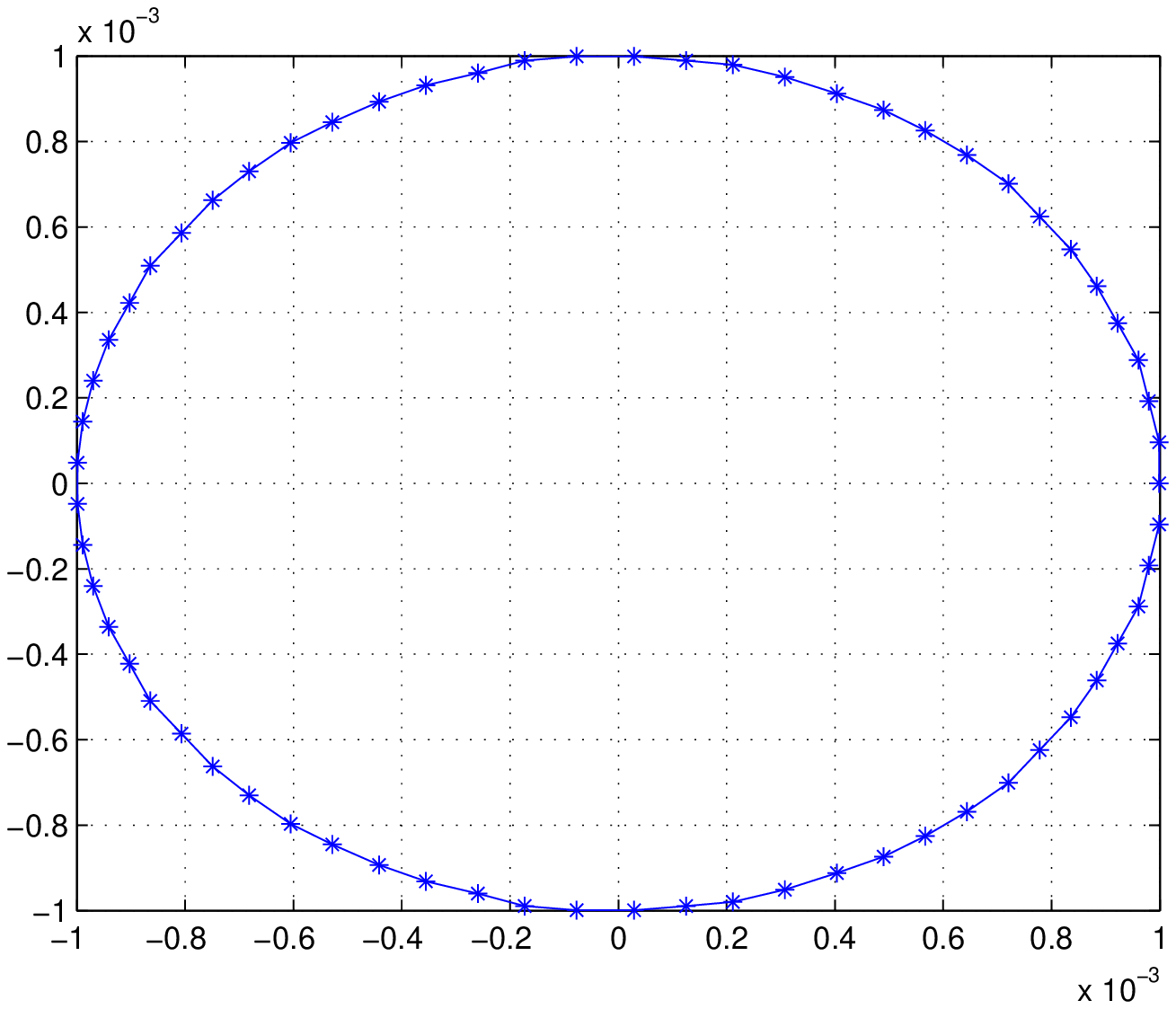}\\
\end{center}
\caption{The left picture shows the initial distribution $u$ and the right picture 
visualizes the discretization of the circle of radius $R=R(\tau)$ that is used to 
evaluate the circle integrals in Definition~\ref{defCD}.} 
\label{fig:ex01}
\end{figure}

% --- 2D scale space 1) ---
\begin{figure}[!ht]
\begin{center}
\includegraphics[height=5cm,angle=0]{./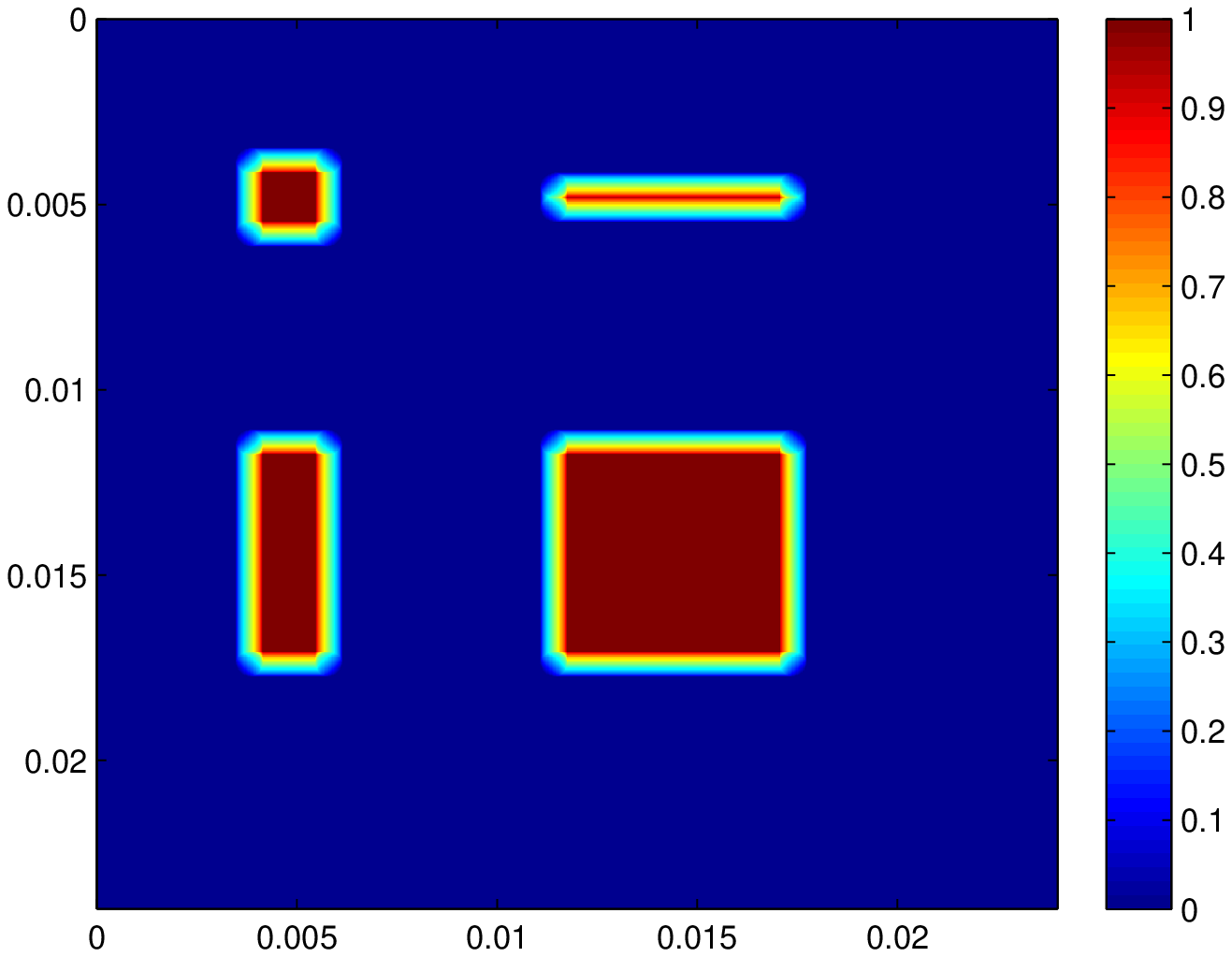}
\includegraphics[height=5cm,angle=0]{./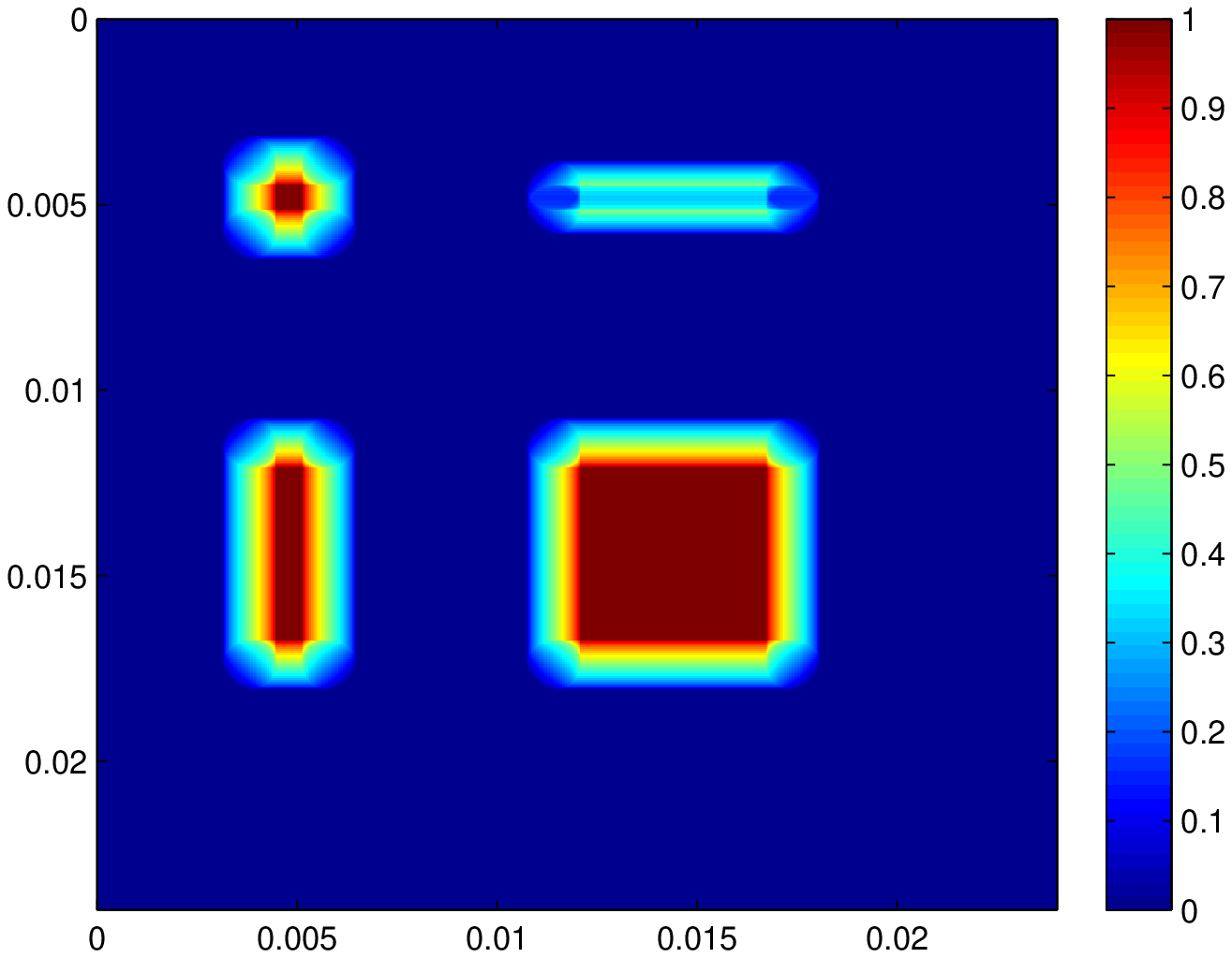}\\
\includegraphics[height=5cm,angle=0]{./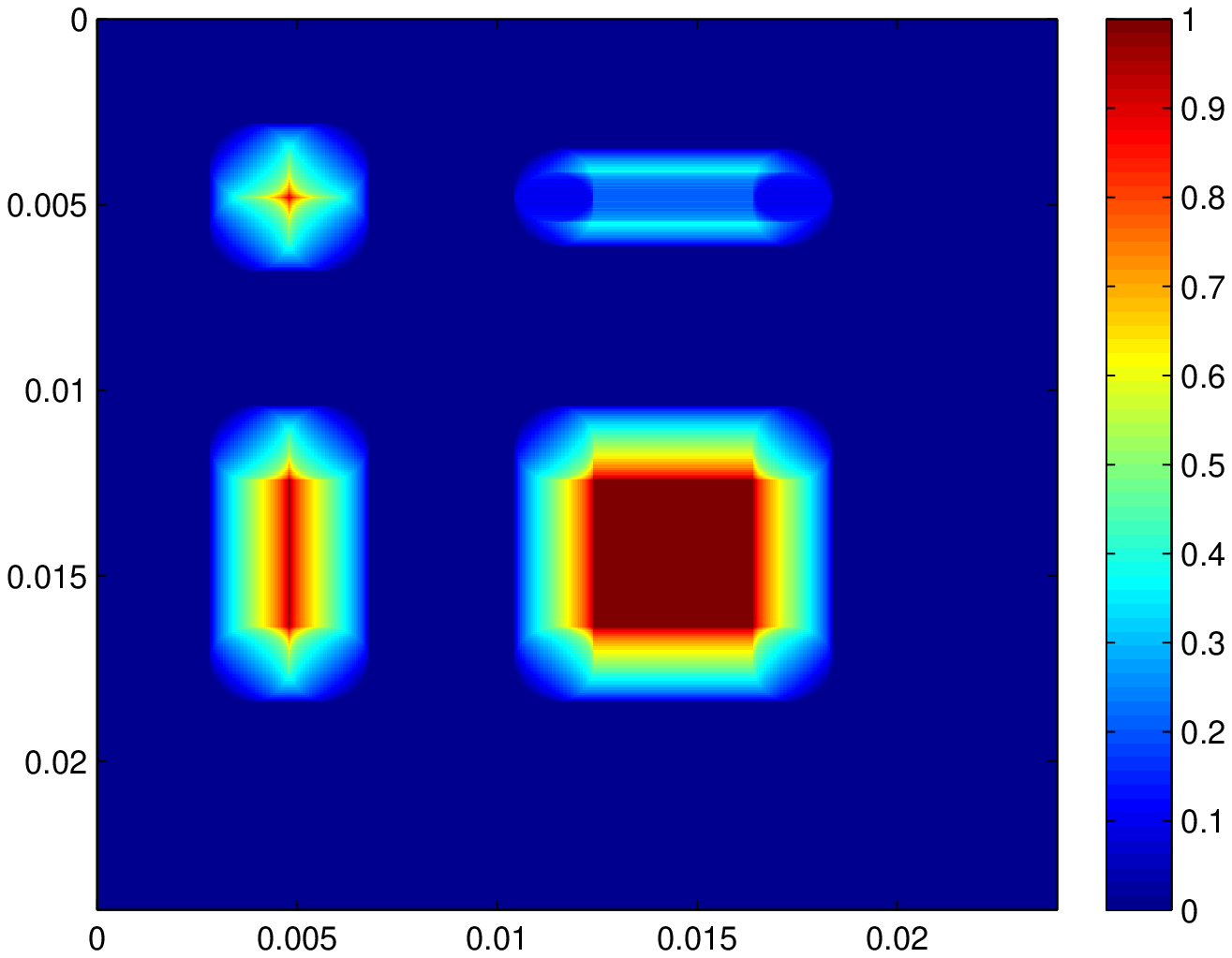}
\includegraphics[height=5cm,angle=0]{./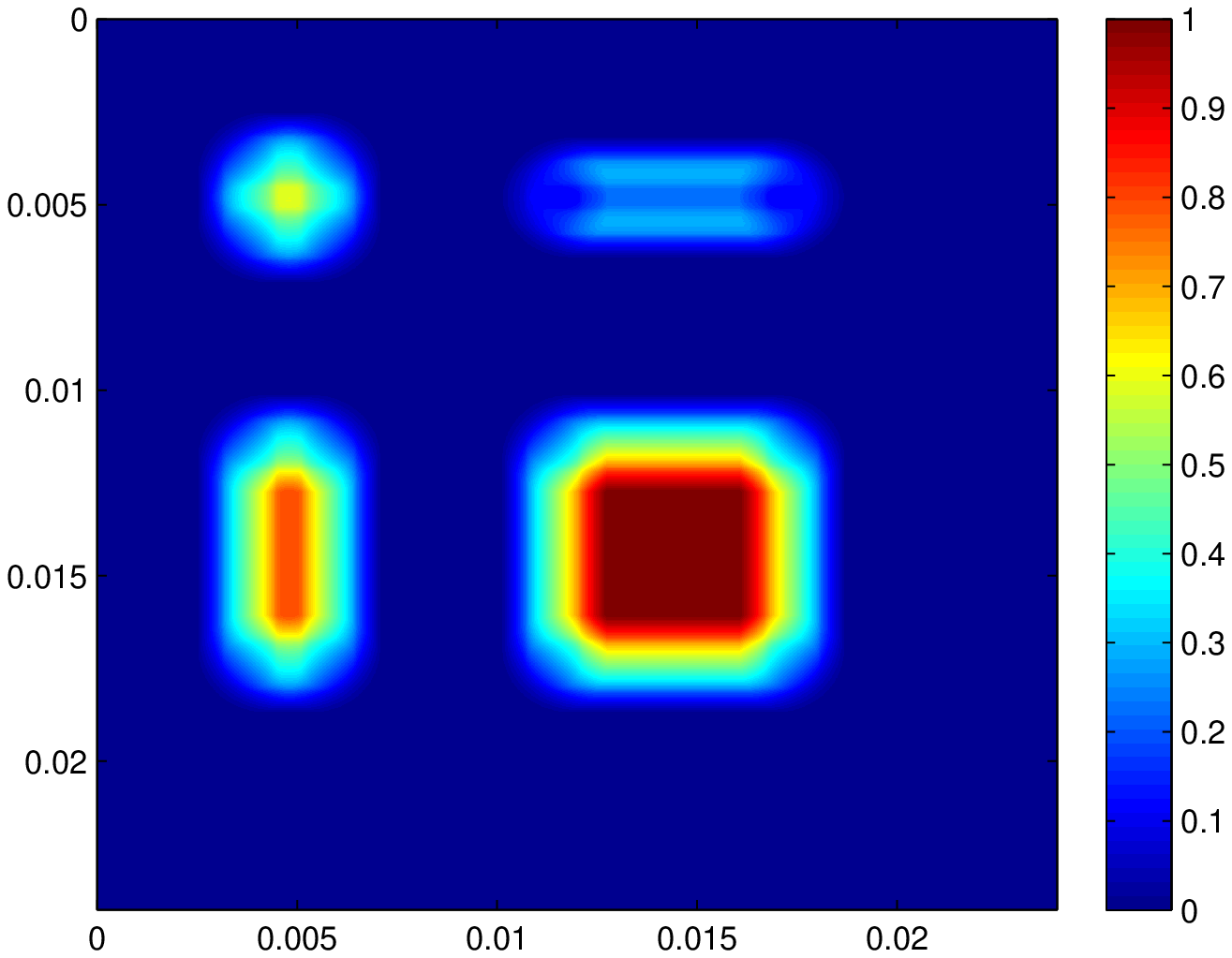}\\
\includegraphics[height=5cm,angle=0]{./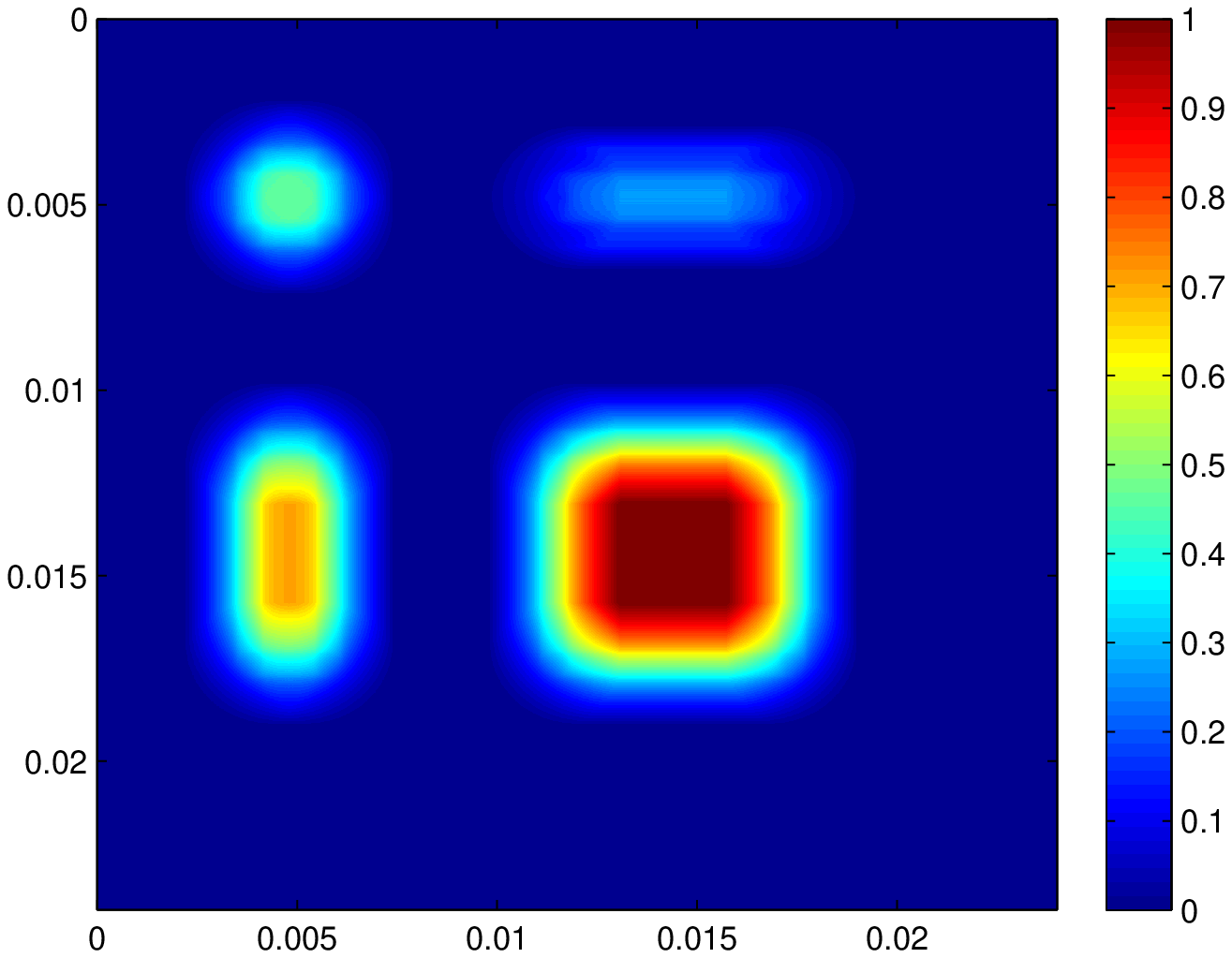}
\includegraphics[height=5cm,angle=0]{./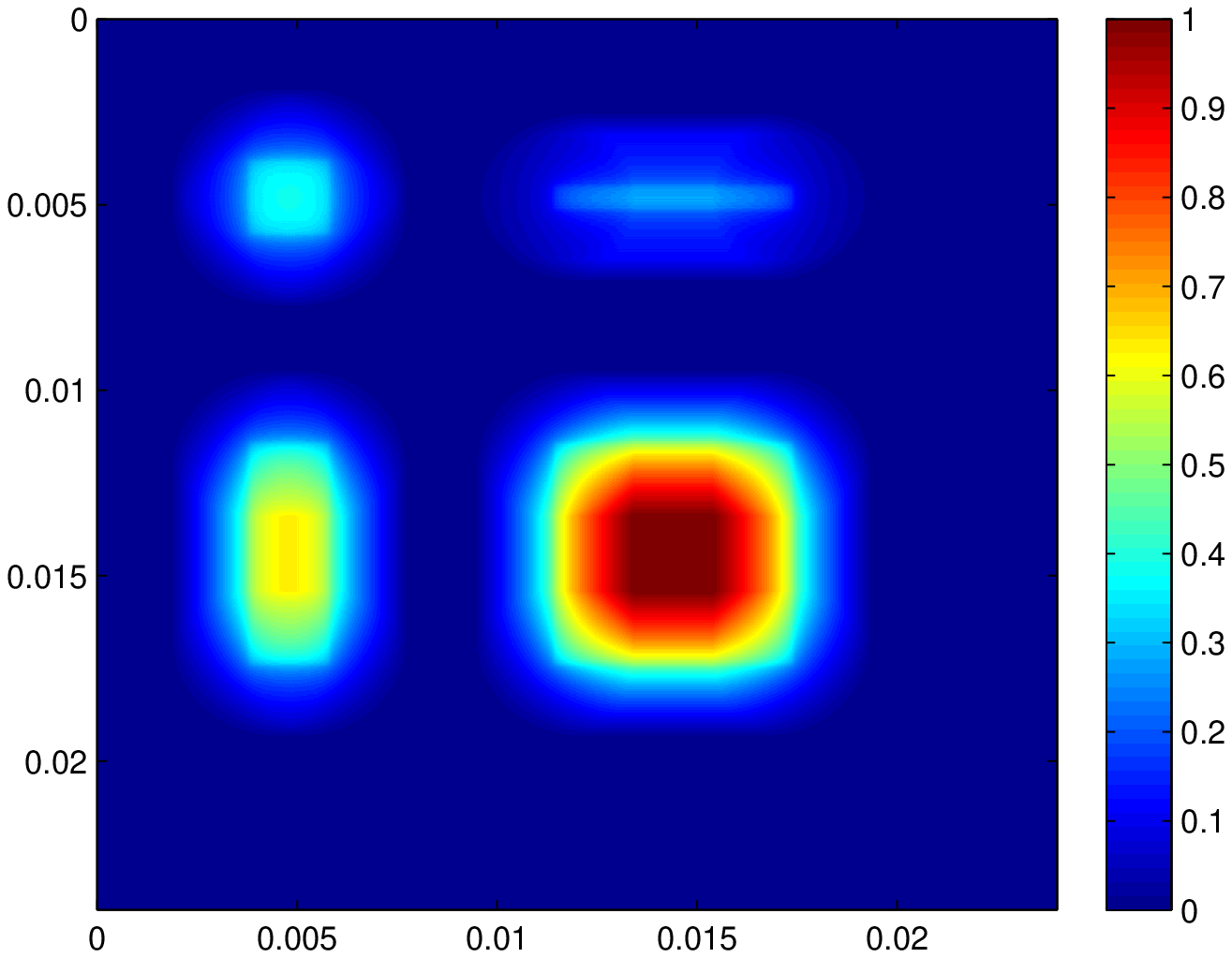}
\end{center}
\caption{Visualization of causal diffusion for the time sequence 
         $(\frac{\tau}{3},\,\frac{2\,\tau}{3},\,\tau,\,\ldots,\,2\,\tau)$ with 
         discretization $\Delta x=9.6\cdot 10^{-6}$ and $\Delta t=\tau=R/c$.  
         The initial distribution is shown in Fig.~\ref{fig:ex01}. 
}
\label{fig:ex01b}
\end{figure}

% --- 2D scale space 2) ---
\begin{figure}[!ht]
\begin{center}
\includegraphics[height=5cm,angle=0]{./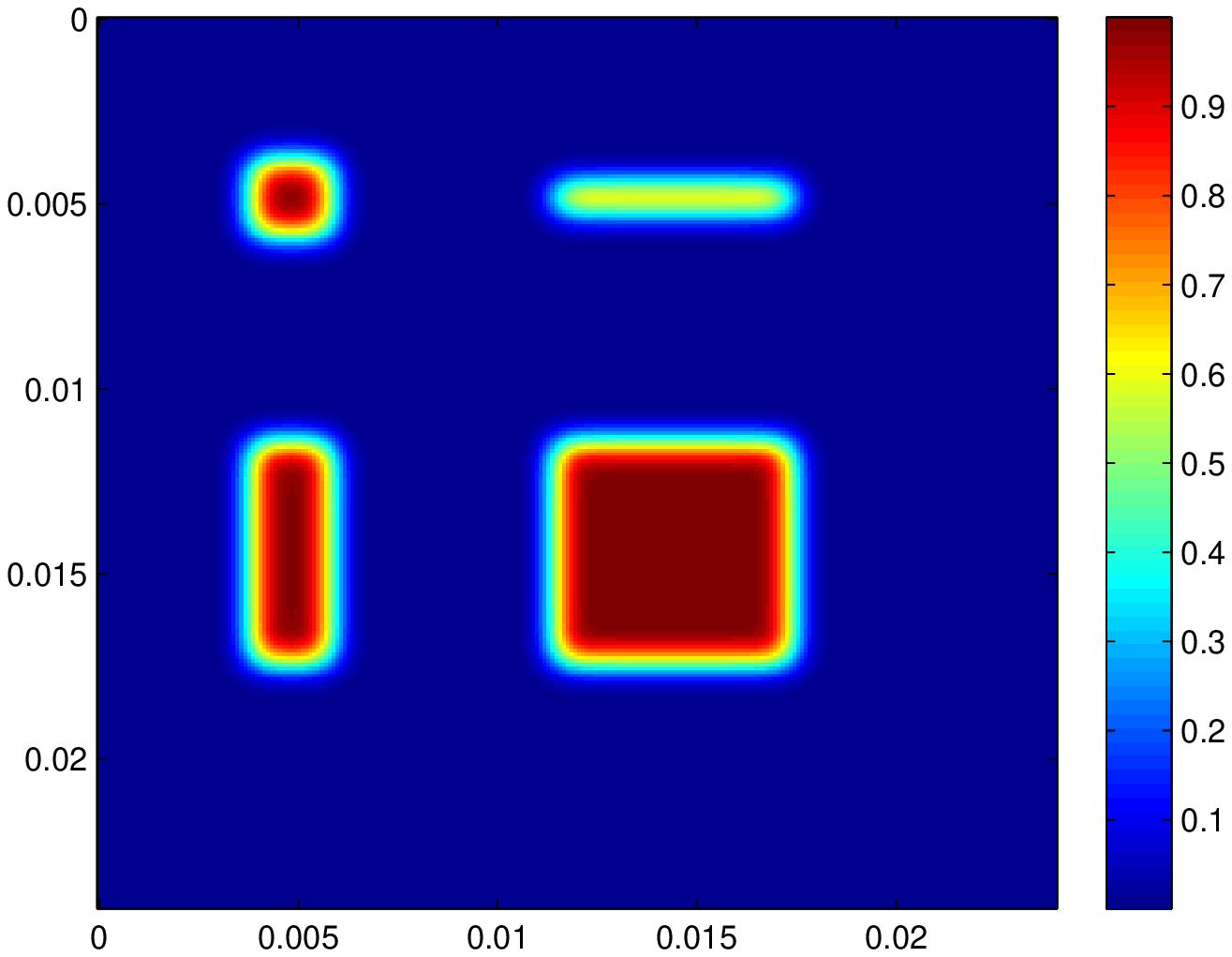}
\includegraphics[height=5cm,angle=0]{./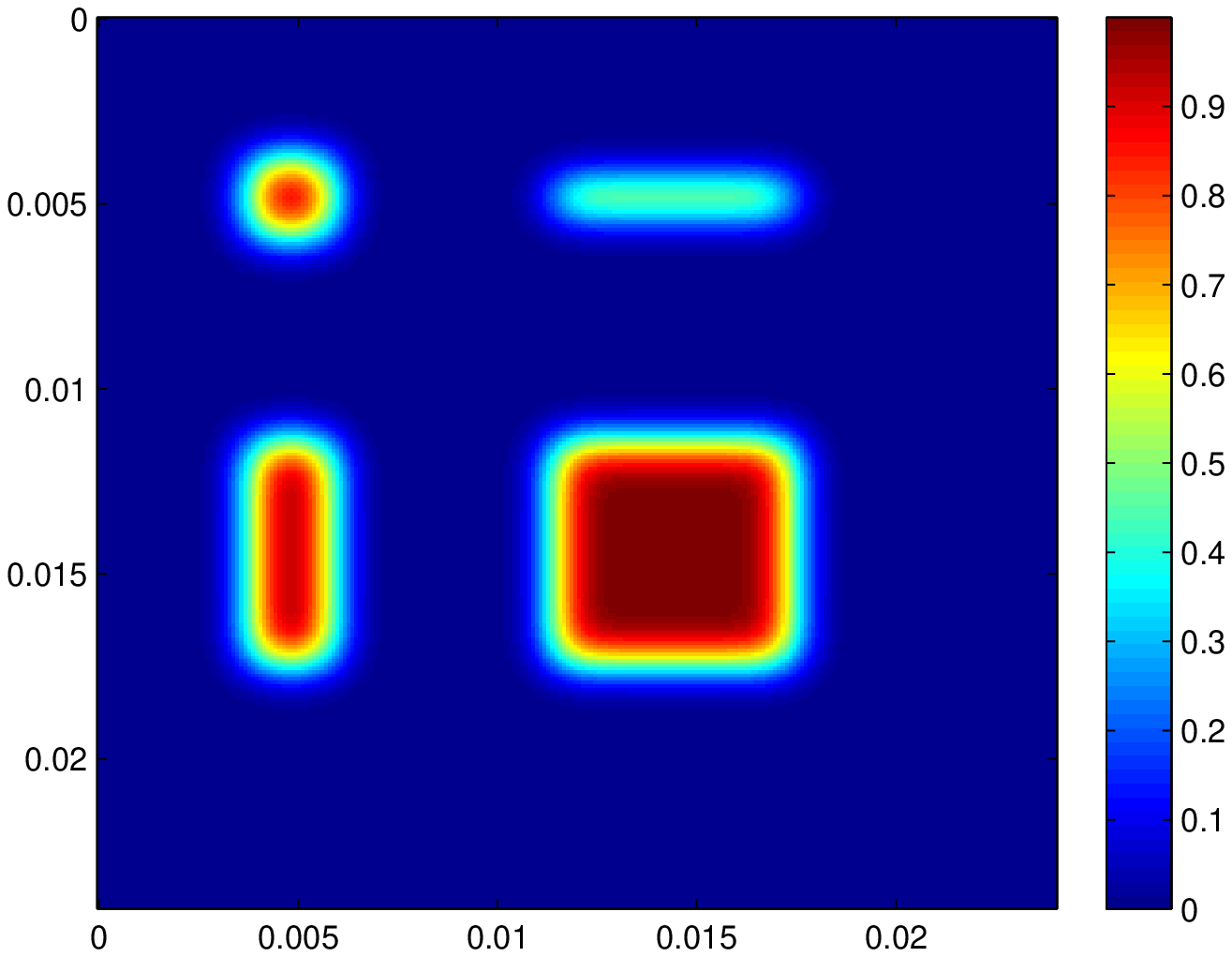}\\
\includegraphics[height=5cm,angle=0]{./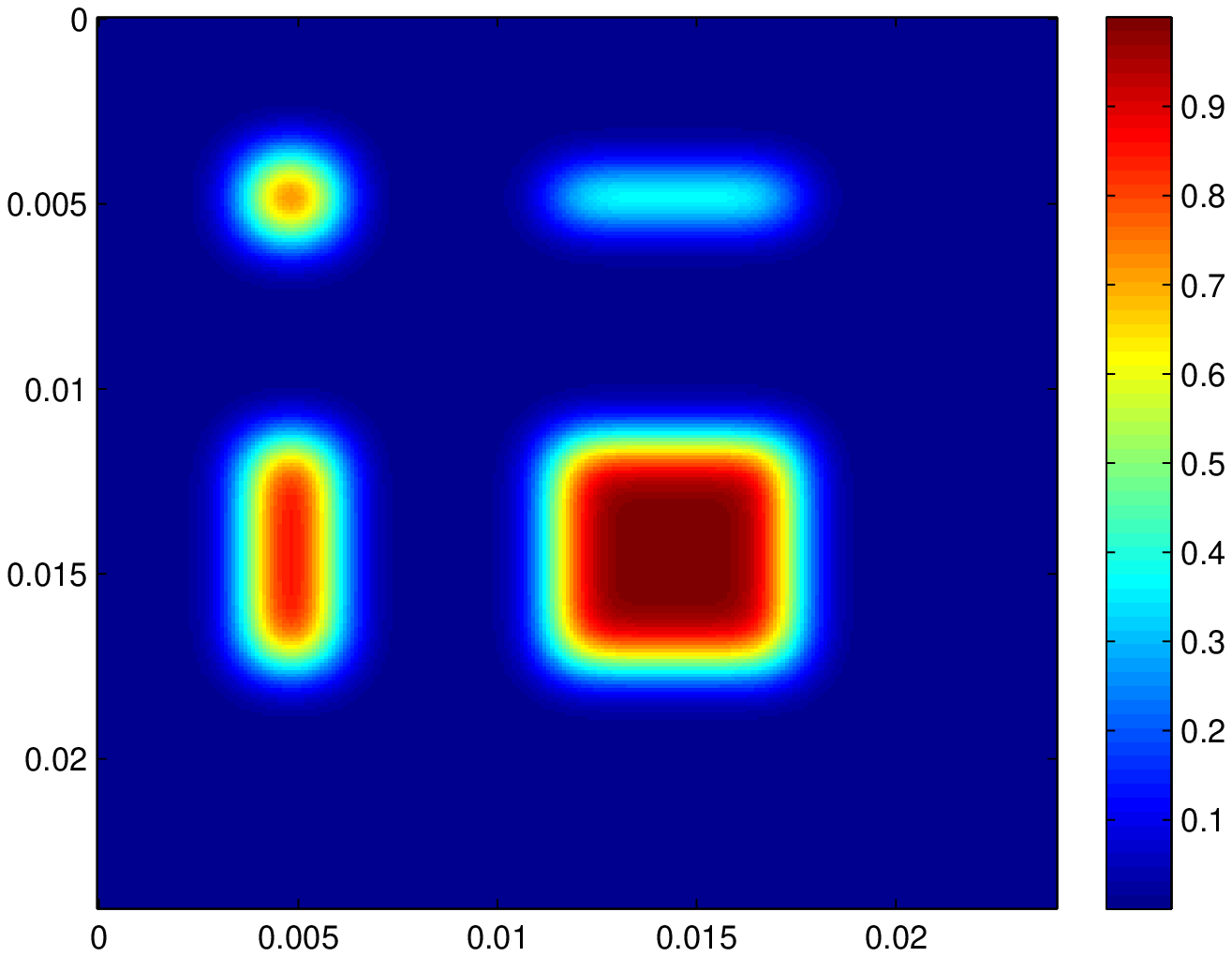}
\includegraphics[height=5cm,angle=0]{./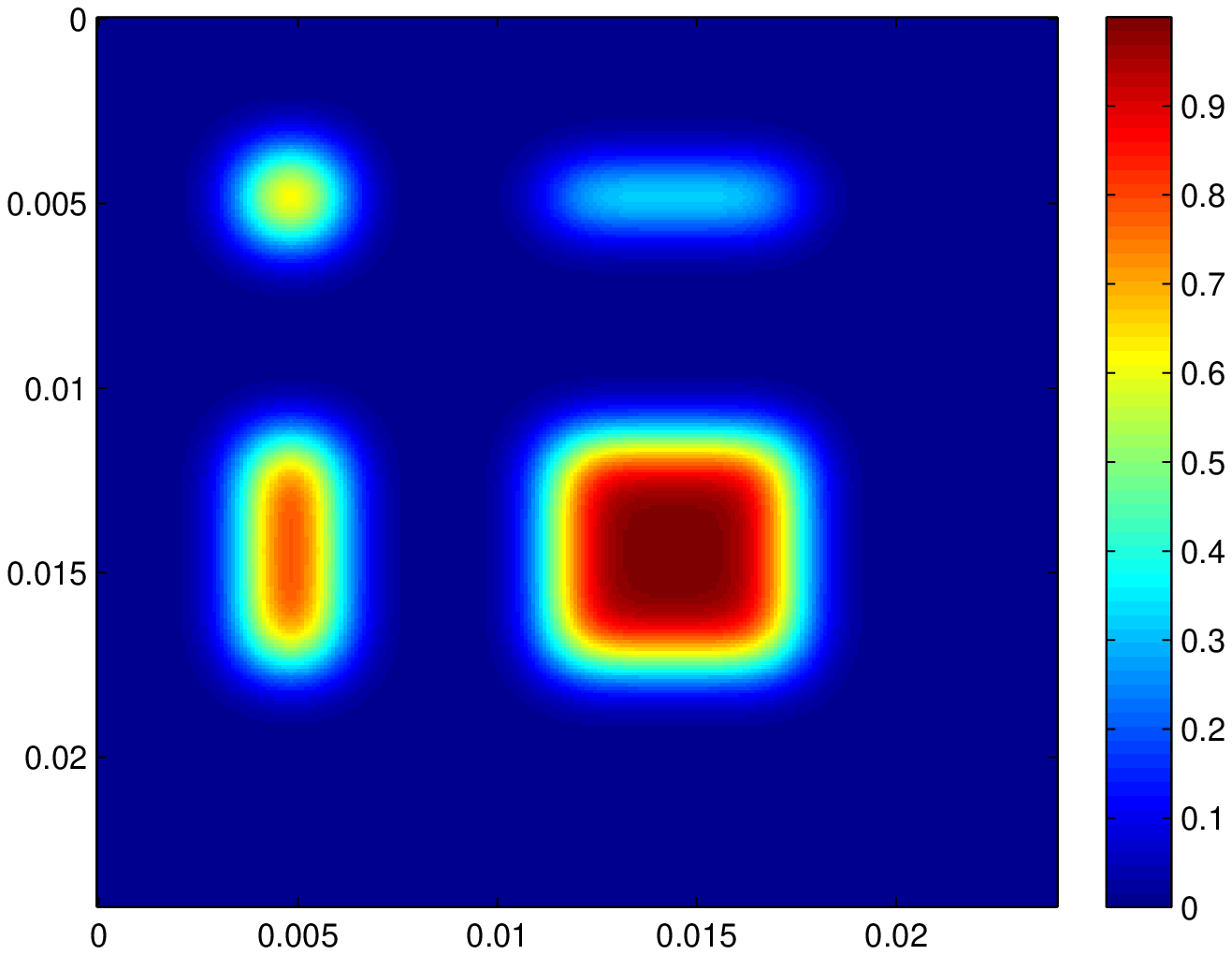}\\
\includegraphics[height=5cm,angle=0]{./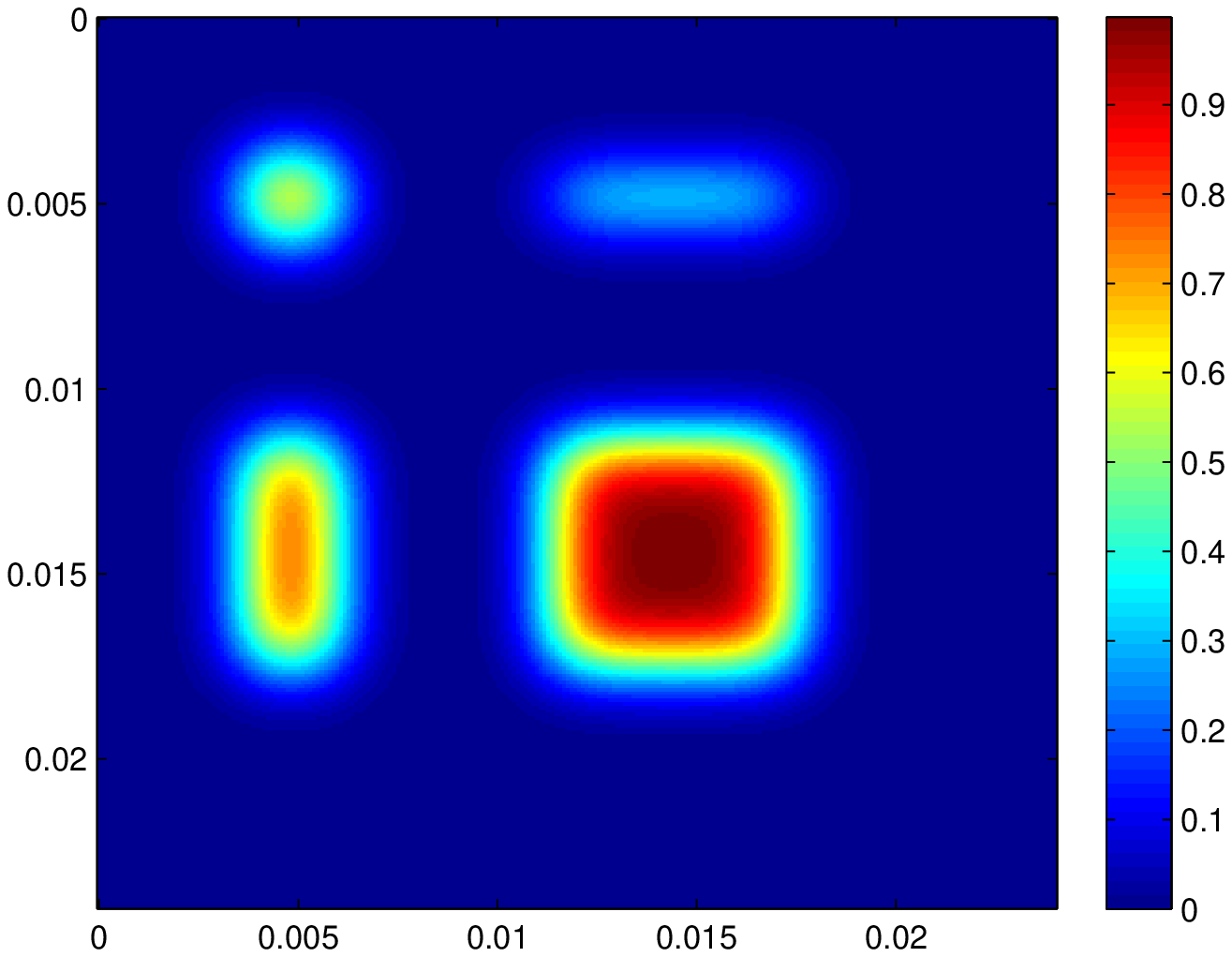}
\includegraphics[height=5cm,angle=0]{./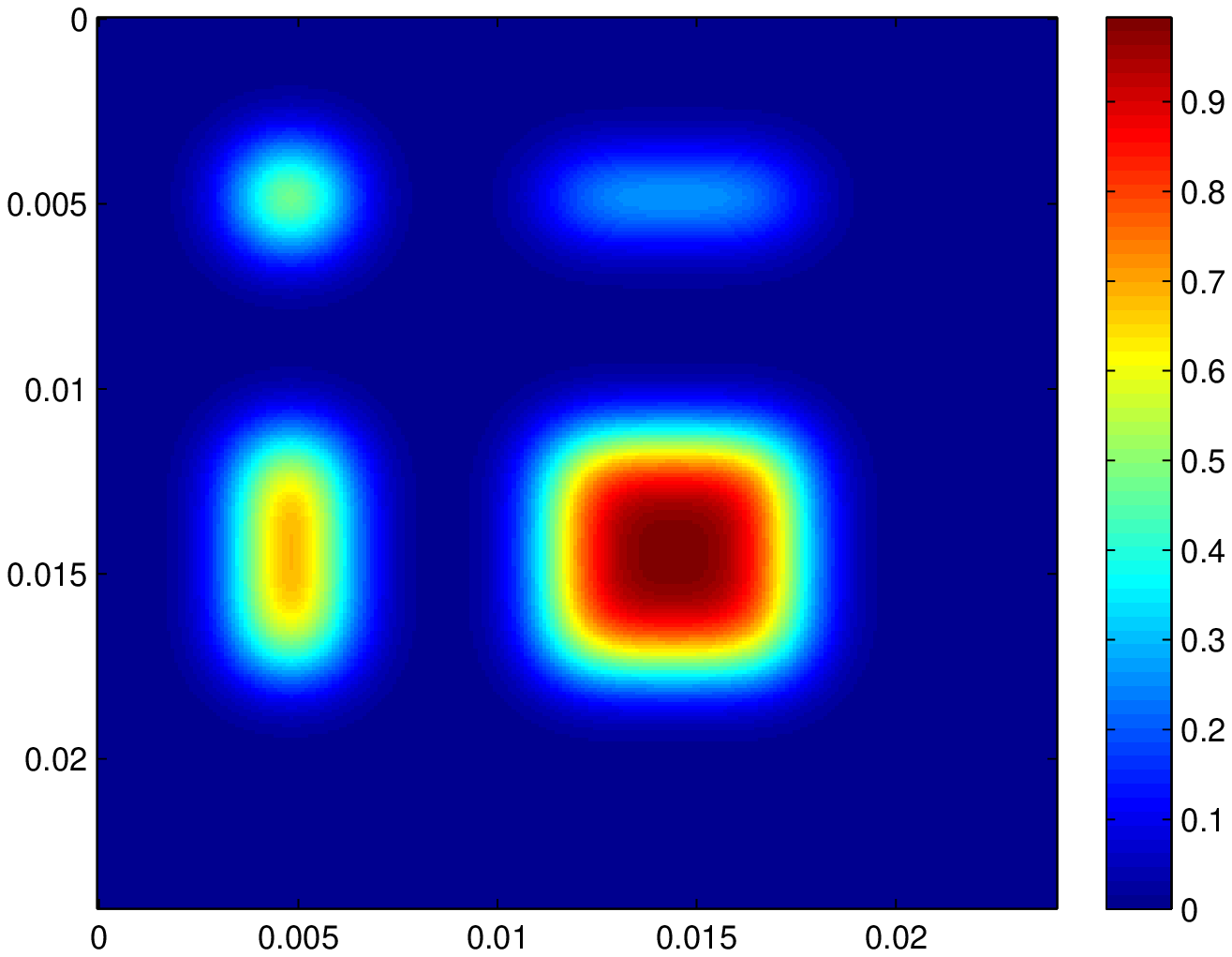} 
\end{center}
\caption{Visualization of standard diffusion for the time sequence 
         $(\frac{\tau}{3},\,\frac{2\,\tau}{3},\,\tau,\,\ldots,\,2\,\tau)$ with  
         discretization $\Delta x := R/10$ and 
         $\Delta t := \frac{\Delta x^2}{2\,N\,D_0}$. 
         The initial distribution is shown in Fig.~\ref{fig:ex01}. 
}
\label{fig:ex01c}
\end{figure}

% --- Inverse pr. 1) ---
\begin{figure}[!ht]
\begin{center}
\includegraphics[height=5cm,angle=0]{./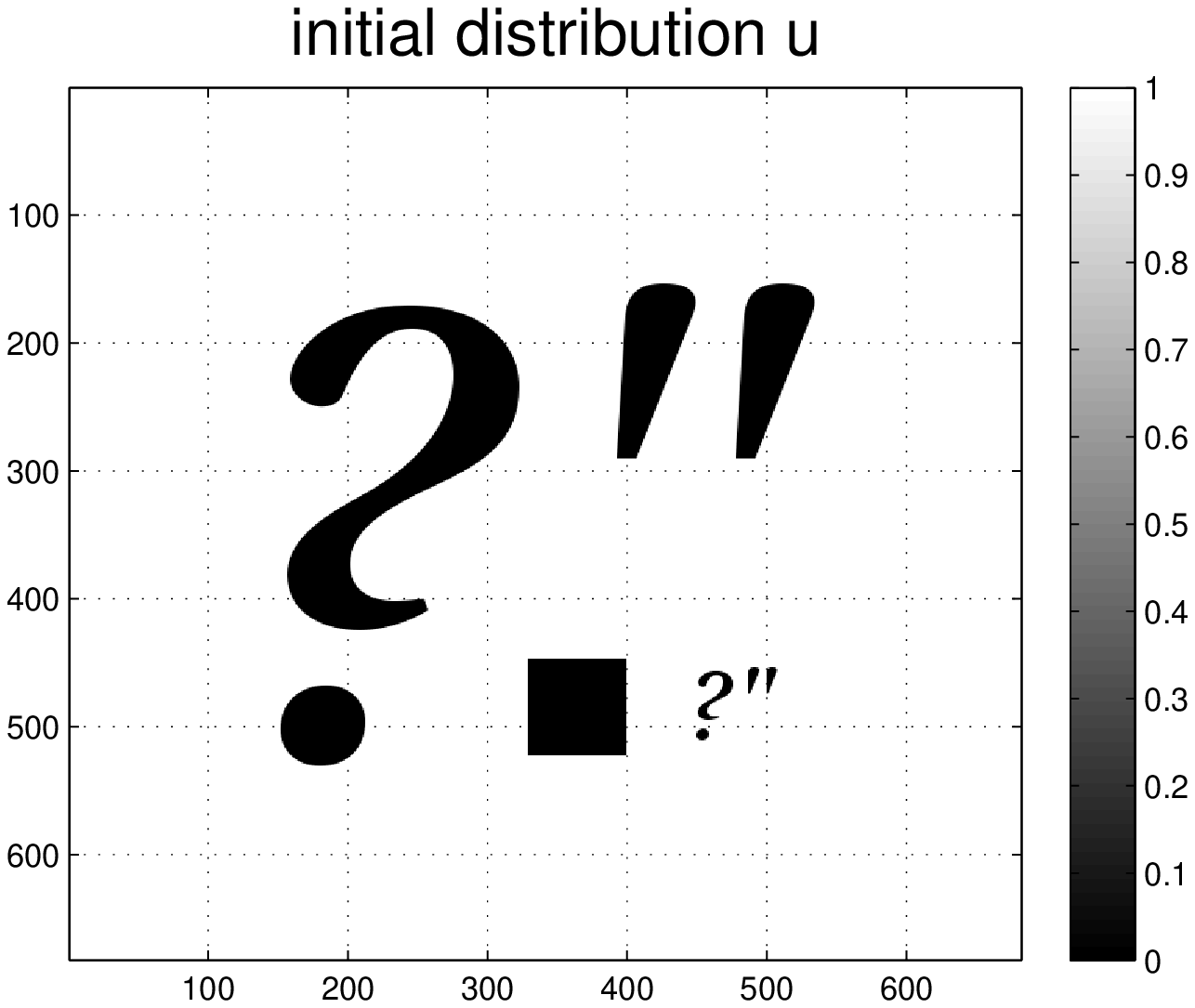}
\includegraphics[height=5cm,angle=0]{./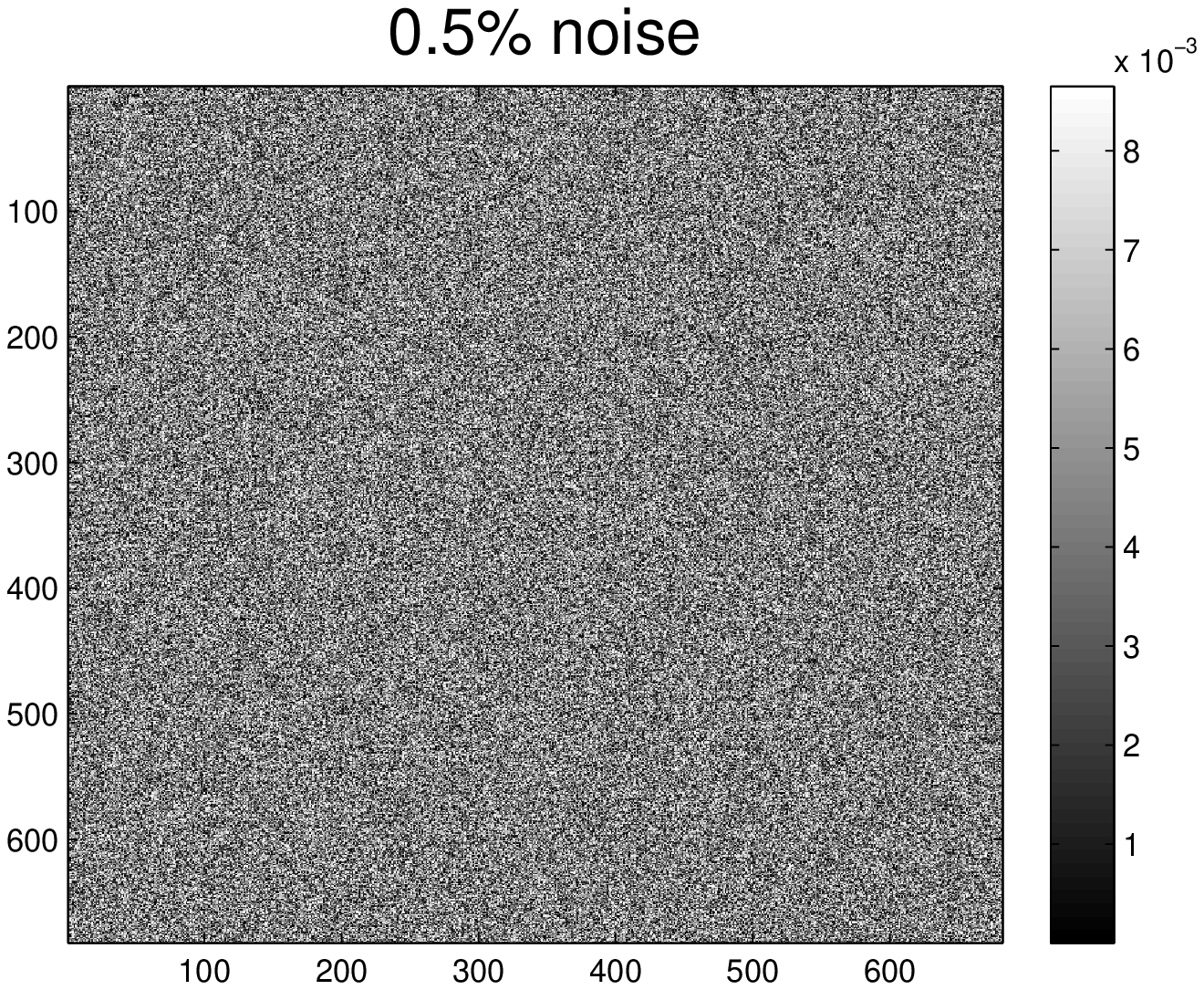}\\
\includegraphics[height=5cm,angle=0]{./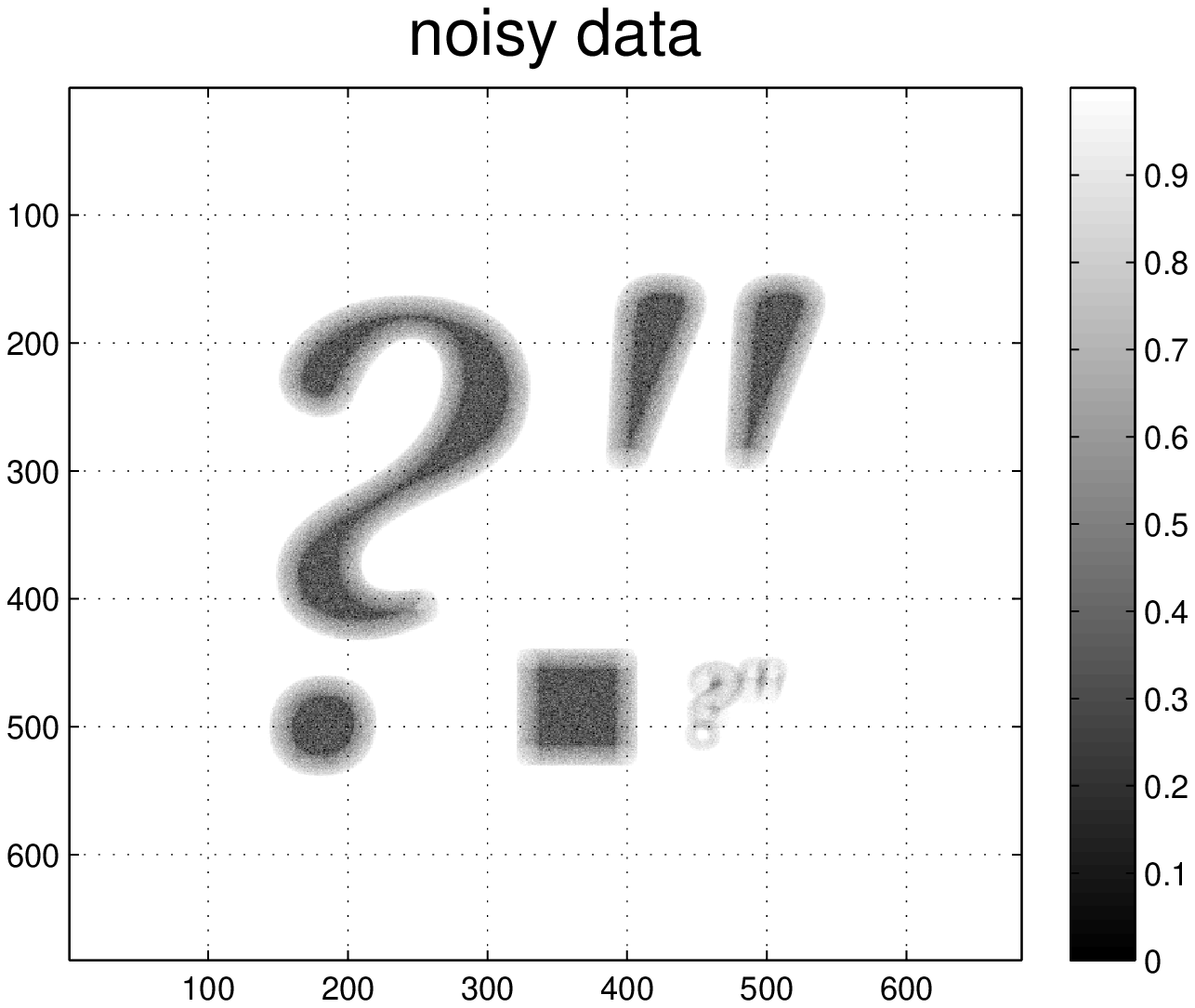}
\includegraphics[height=5cm,angle=0]{./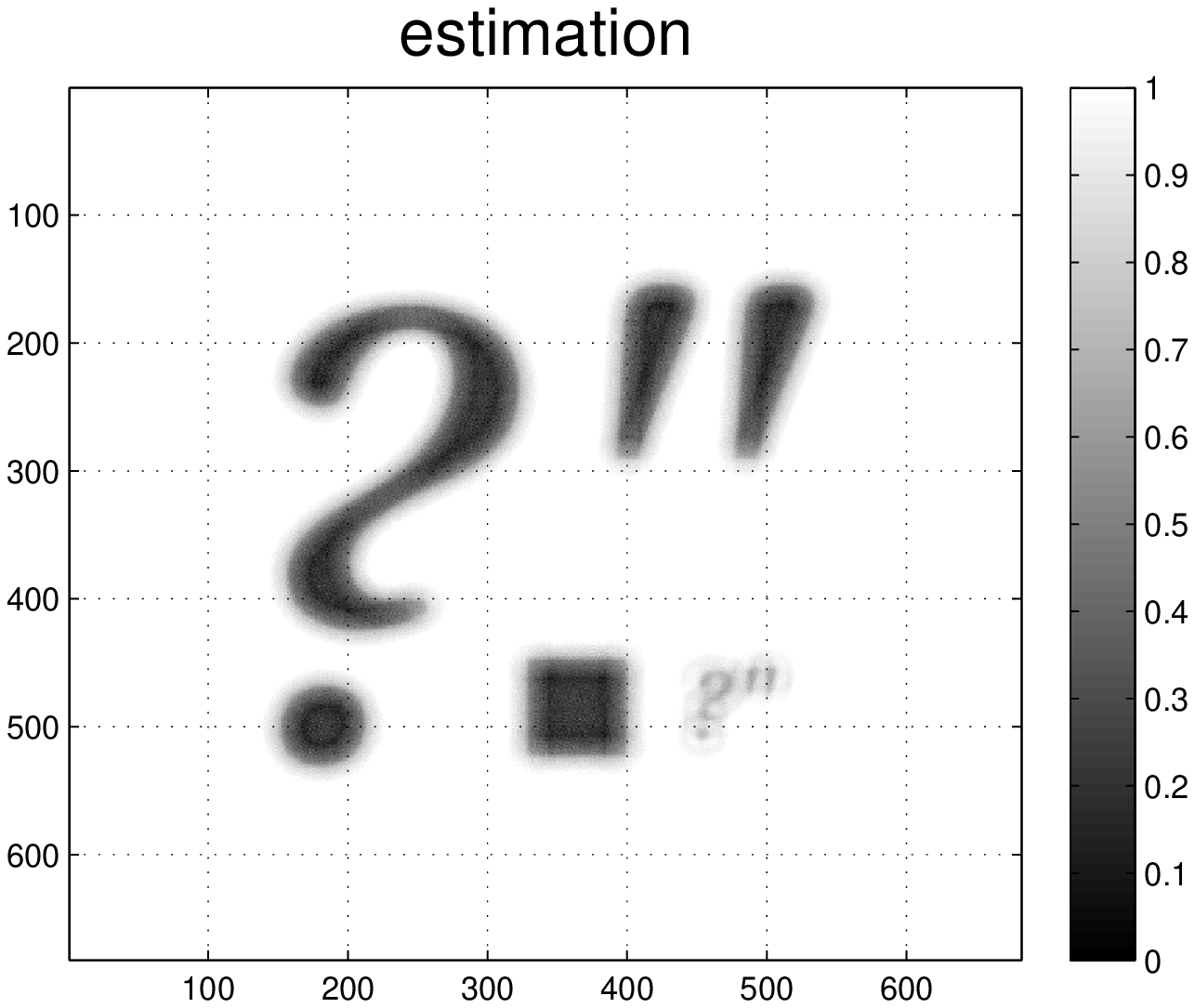}\\
\includegraphics[height=5cm,angle=0]{./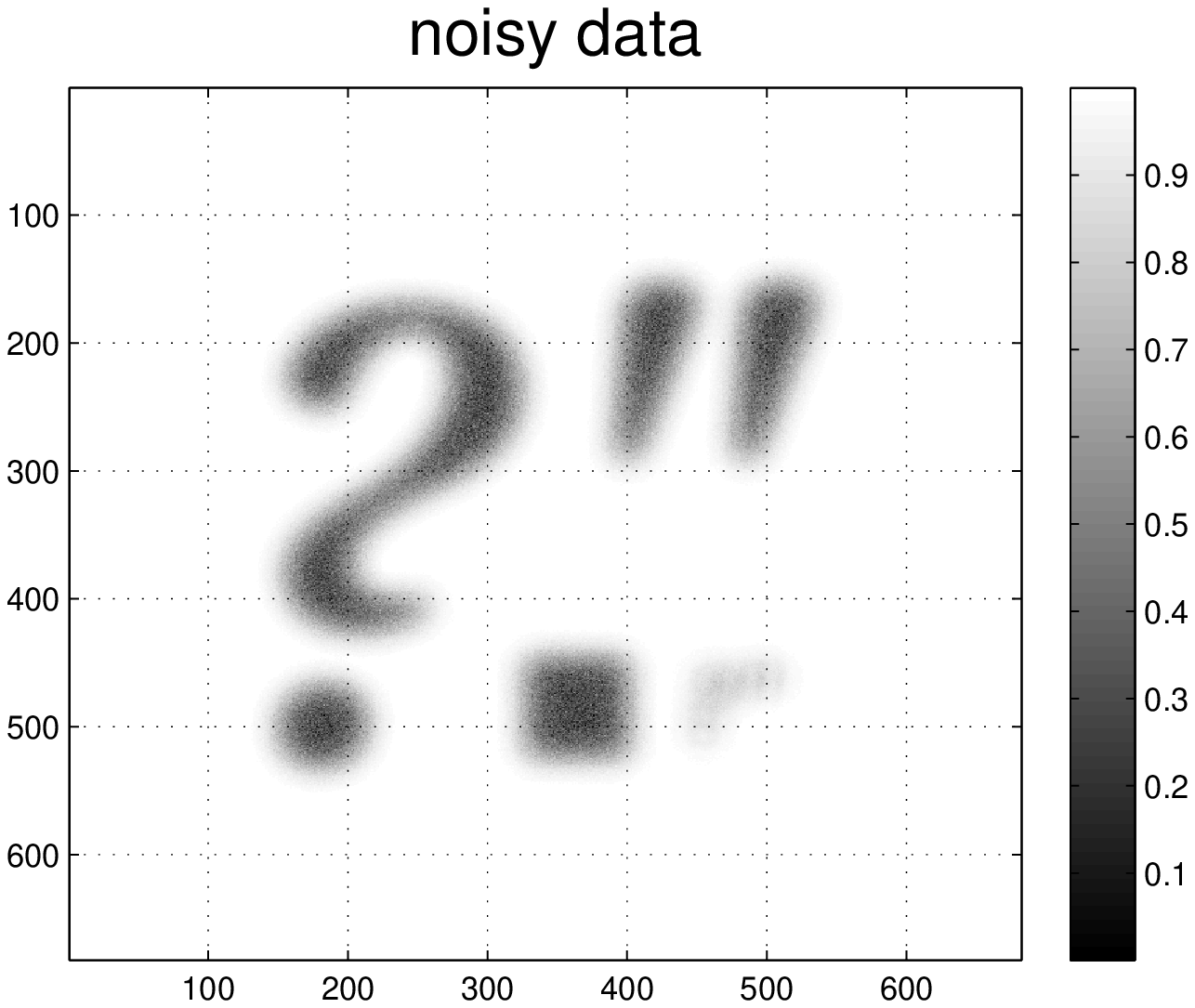}
\includegraphics[height=5cm,angle=0]{./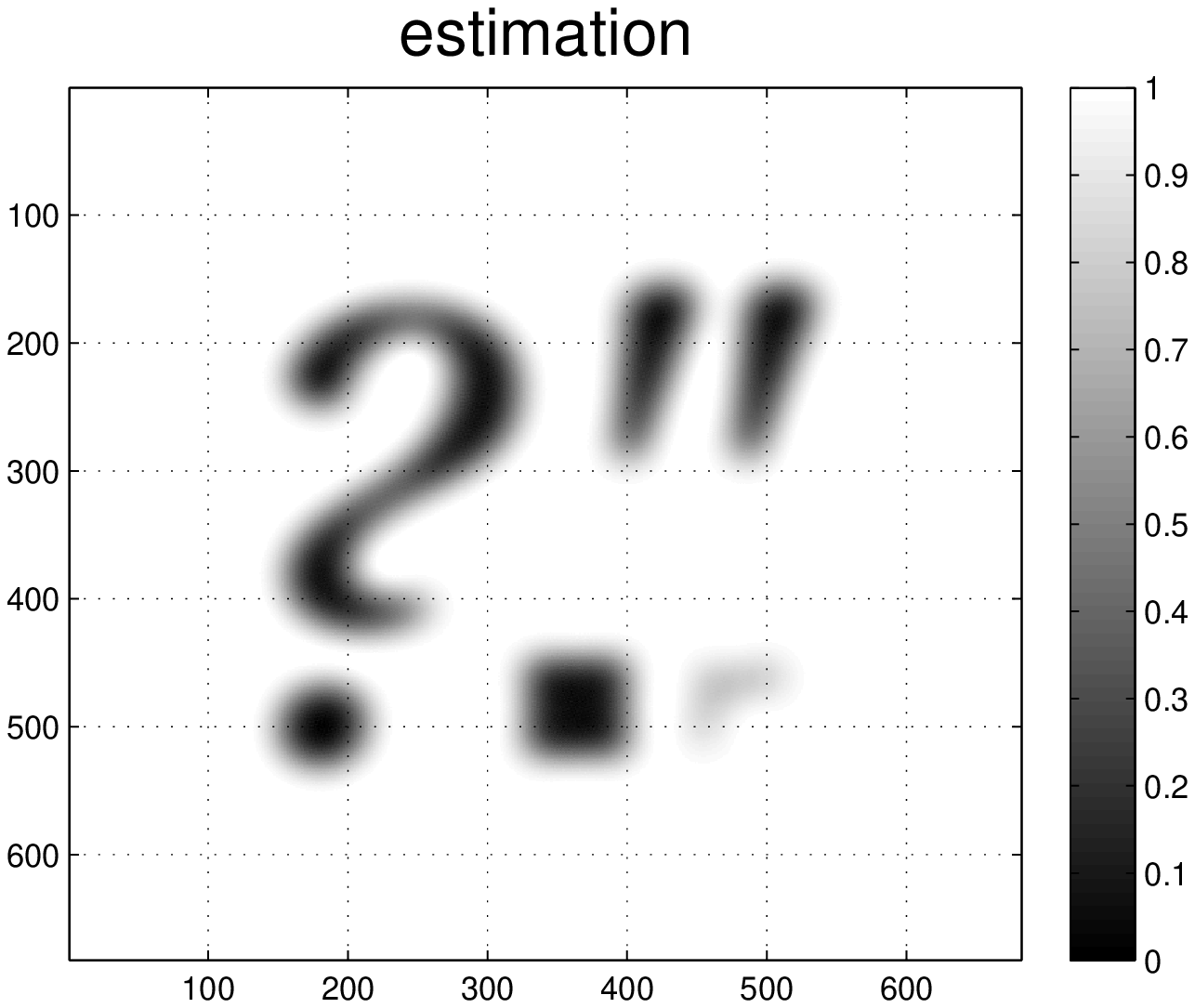}
\end{center}
\caption{Numerical solution of the inverse problem with $0.5\%$ uniformly distributed 
         $L^2$-noise for data acquisition time $T=\tau$ and $T=3\,\tau$. 
         For $T=\tau$ and $T=3\,\tau$, the Discrepancy principle stops optimally 
         for $\eta=9.4$ ($6$ steps) and $\eta=5.9$ ($6$ steps), respectively. 
}
\label{fig:6}
\end{figure}

% --- Inverse pr. 2) ---
\begin{figure}[!ht]
\begin{center}
\includegraphics[height=5cm,angle=0]{./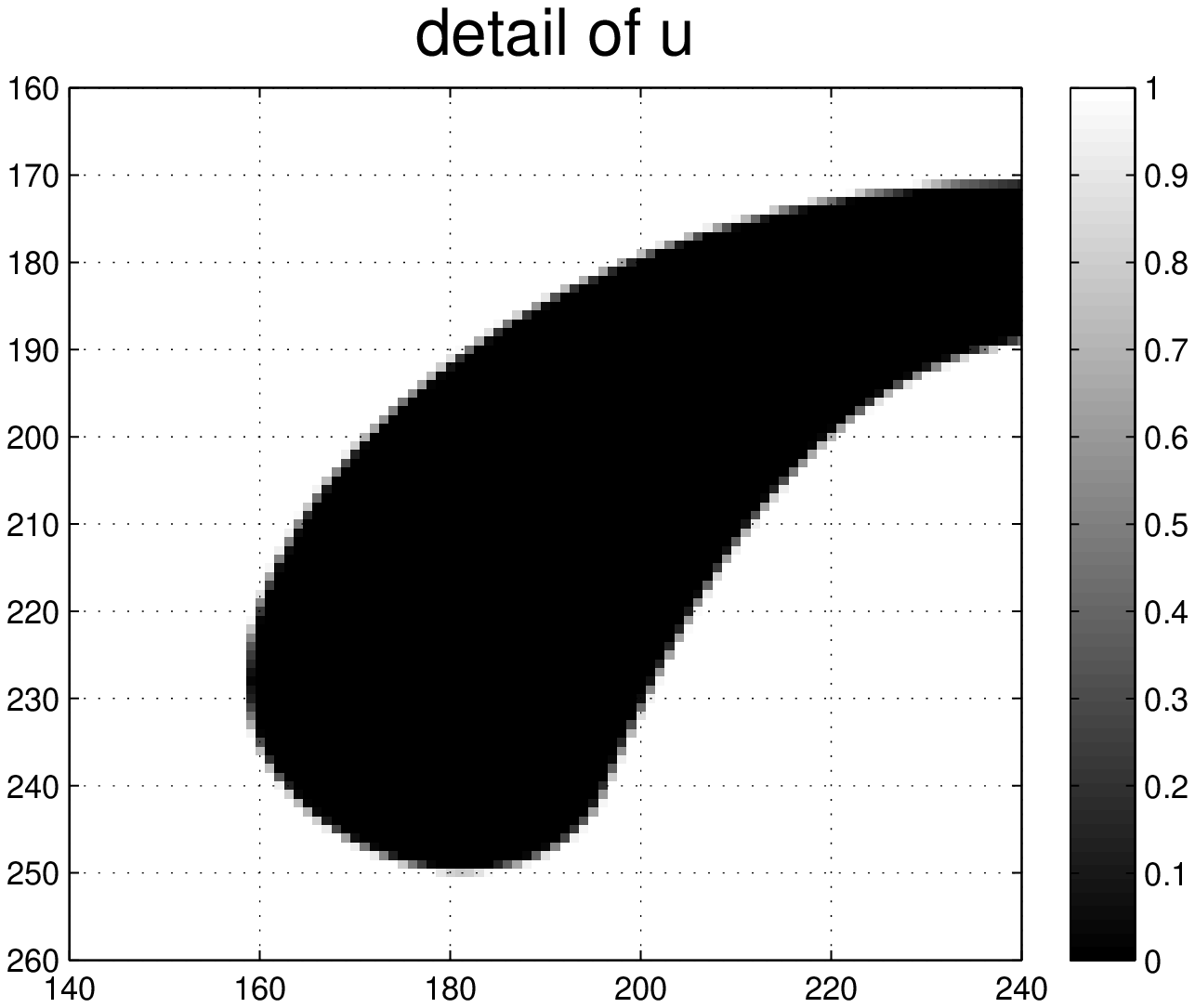}
\includegraphics[height=5cm,angle=0]{./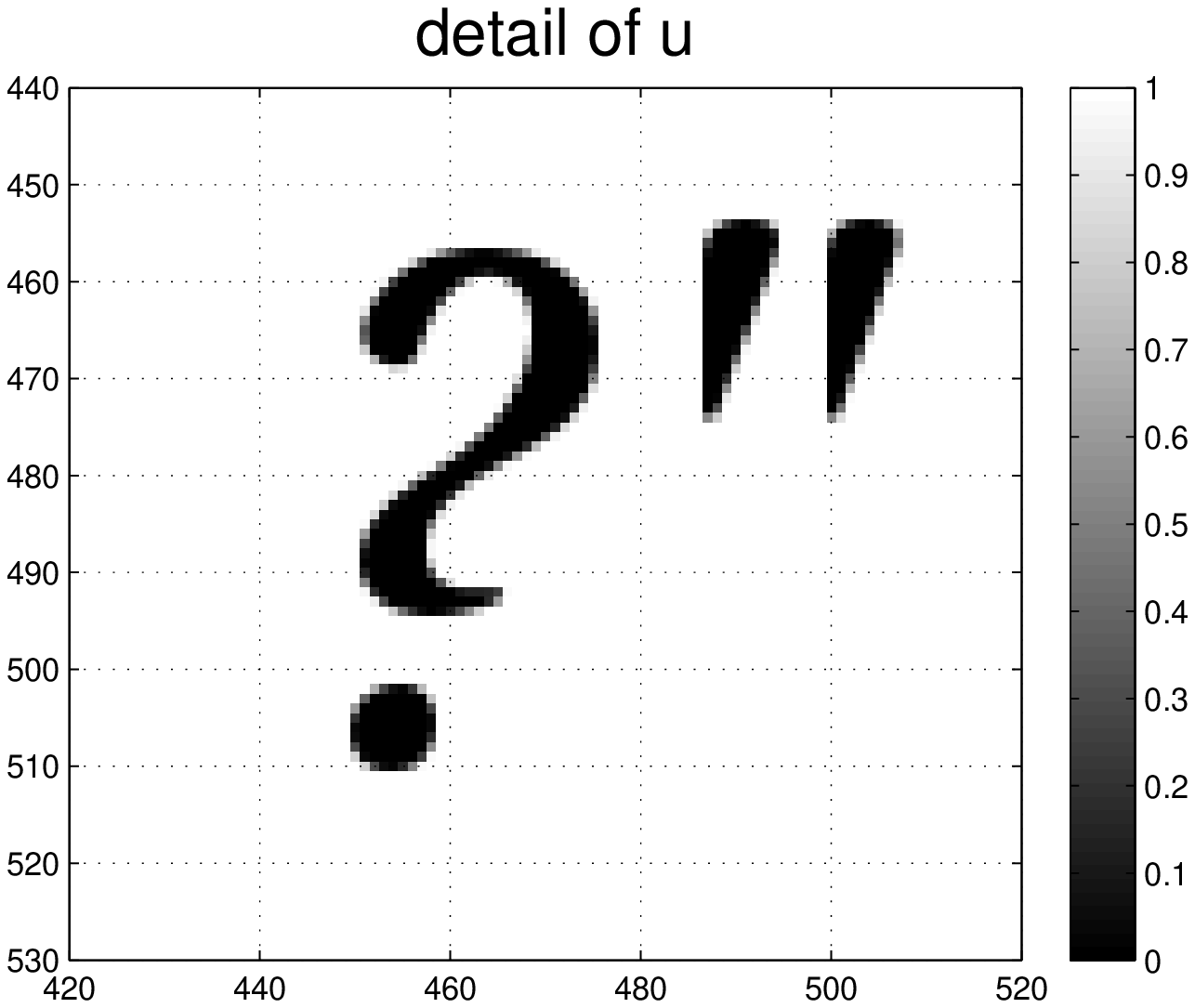}\\
\includegraphics[height=5cm,angle=0]{./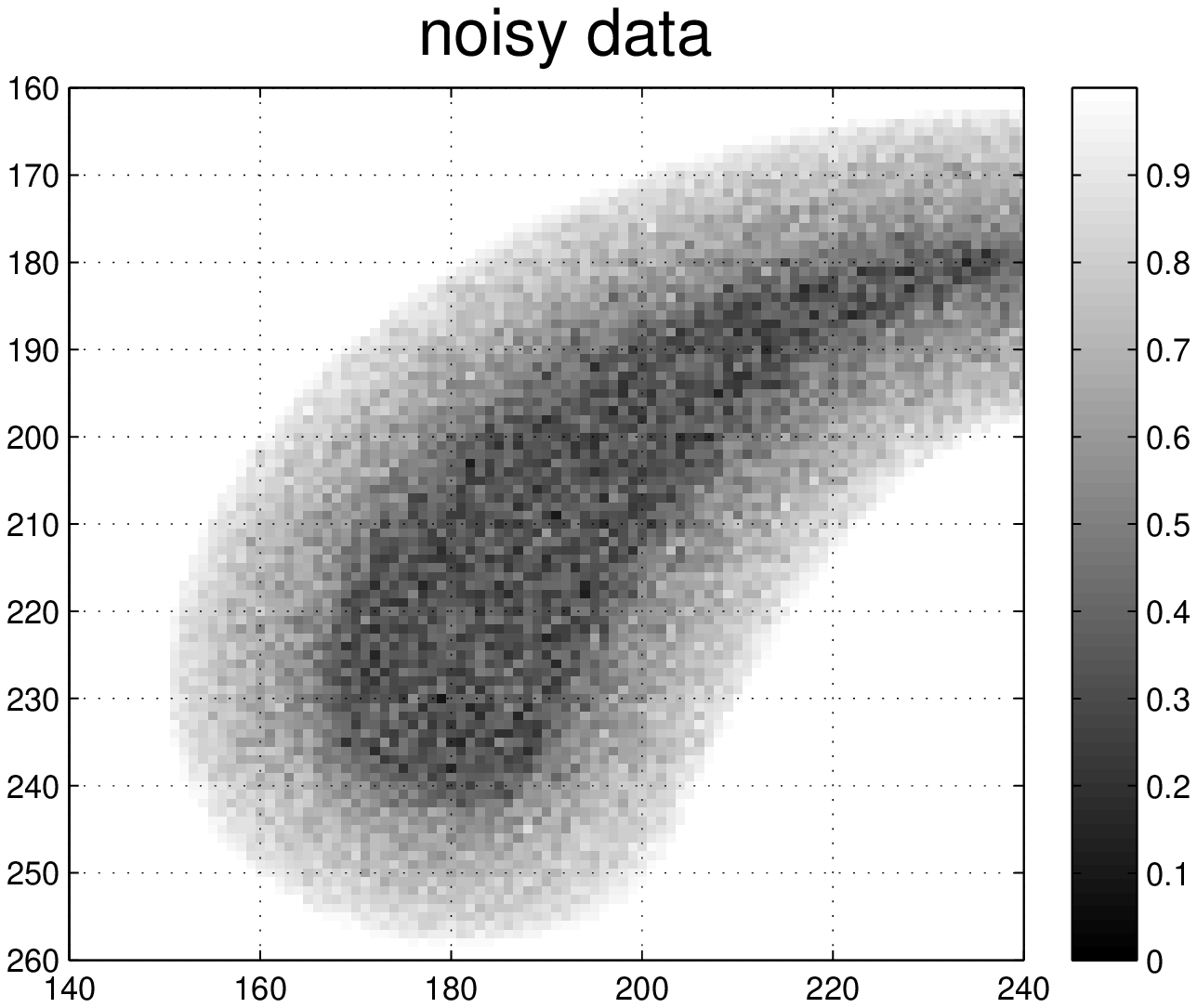}
\includegraphics[height=5cm,angle=0]{./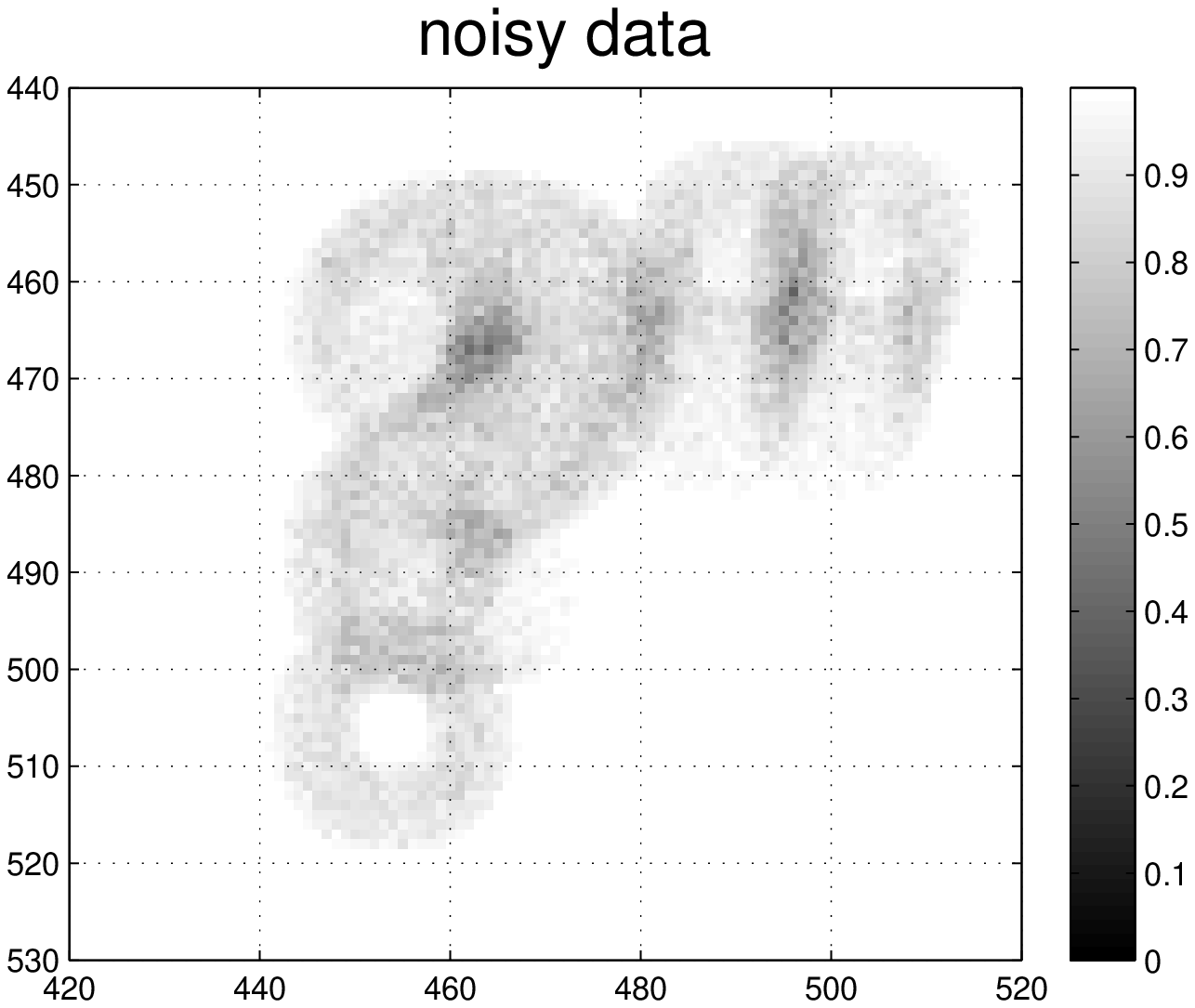}\\
\includegraphics[height=5cm,angle=0]{./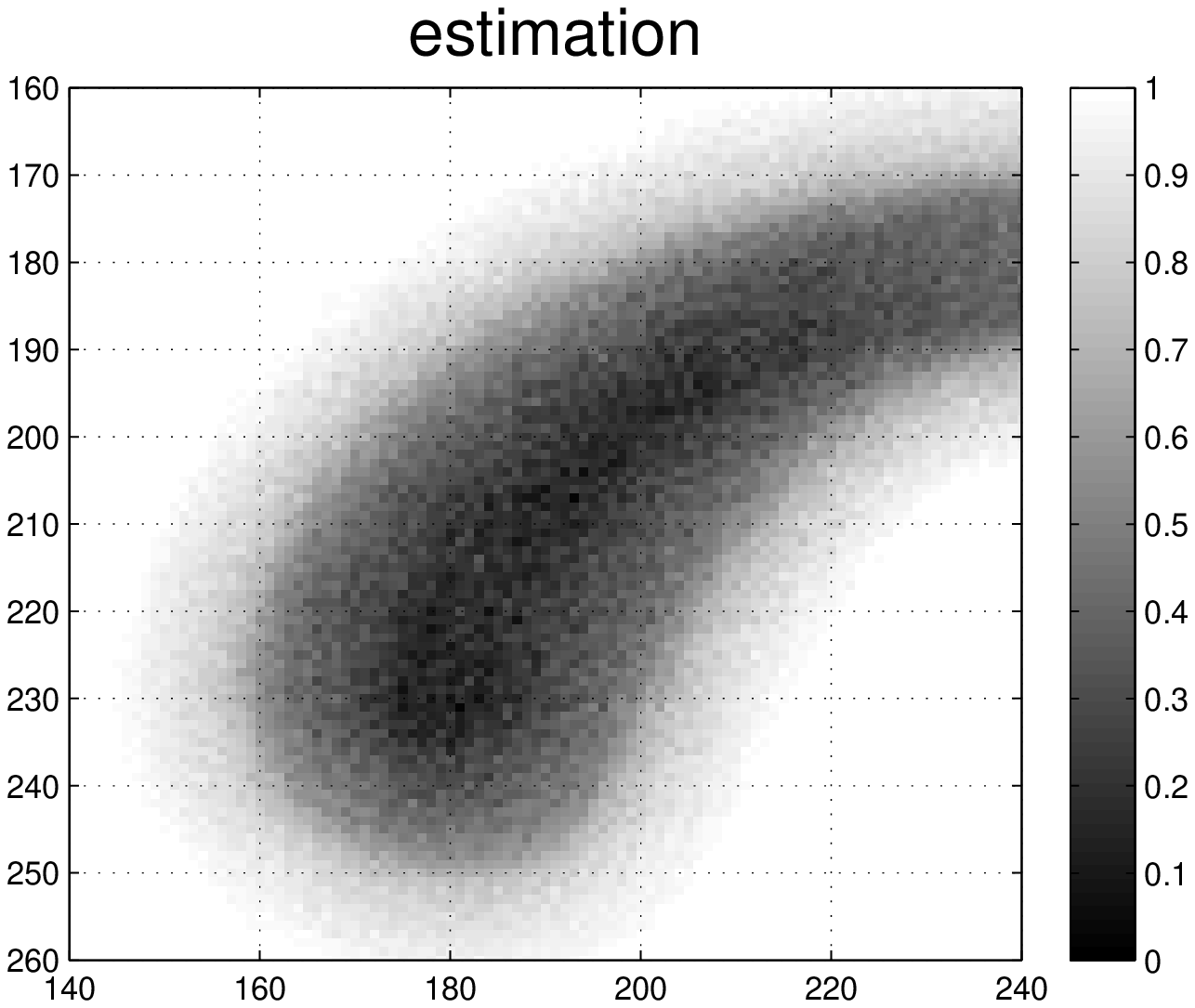}
\includegraphics[height=5cm,angle=0]{./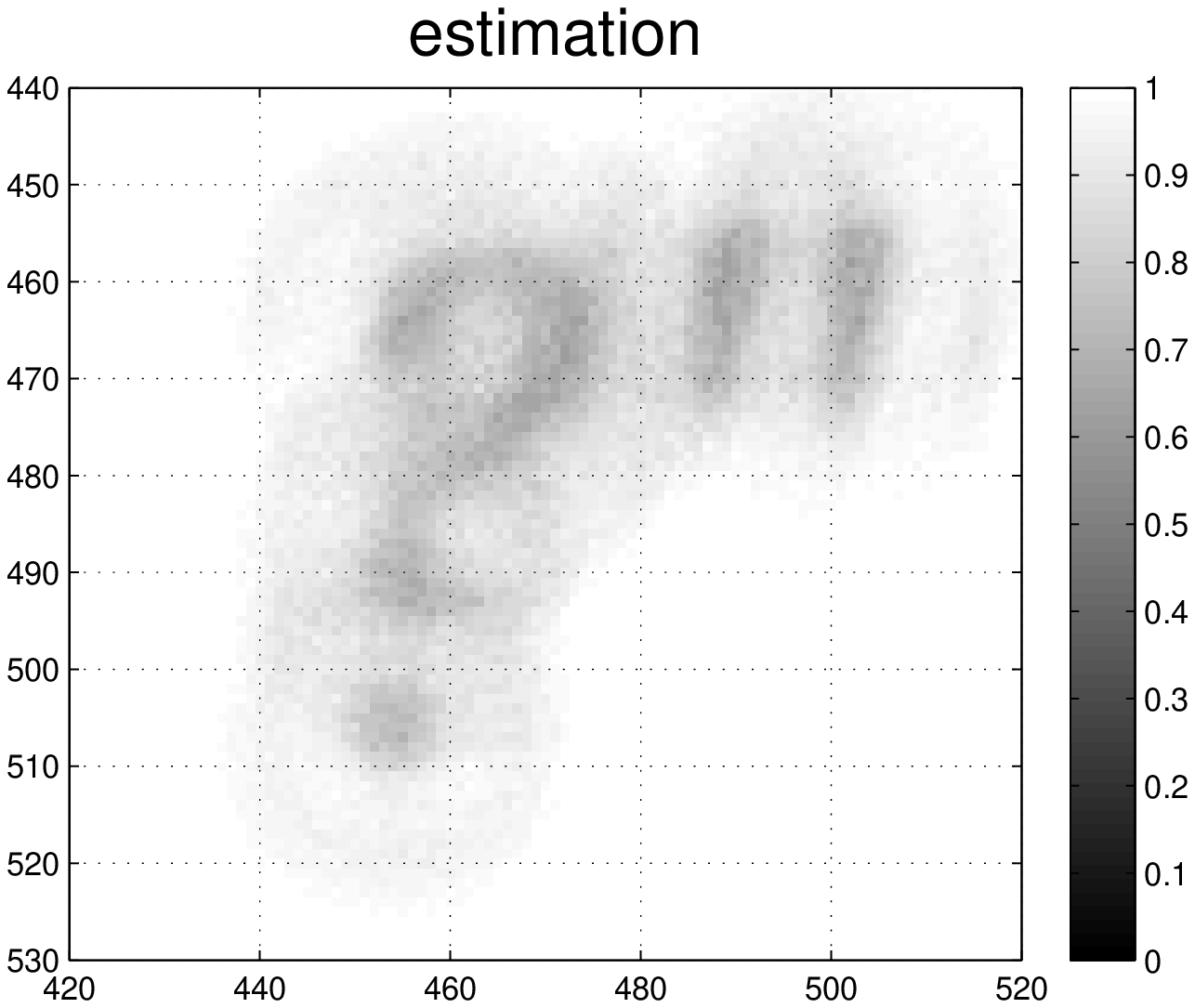} 
\end{center}
\caption{Details to the second row in Fig.~\ref{fig:6} ($T=\tau$). The left 
column shows the left top part of the big question mark and the right column 
shows the small question mark and the small apostrophe.}
\label{fig:7}
\end{figure}

% --- Inverse pr. 4) ---
\begin{figure}[!ht]
\begin{center}
\includegraphics[height=5cm,angle=0]{./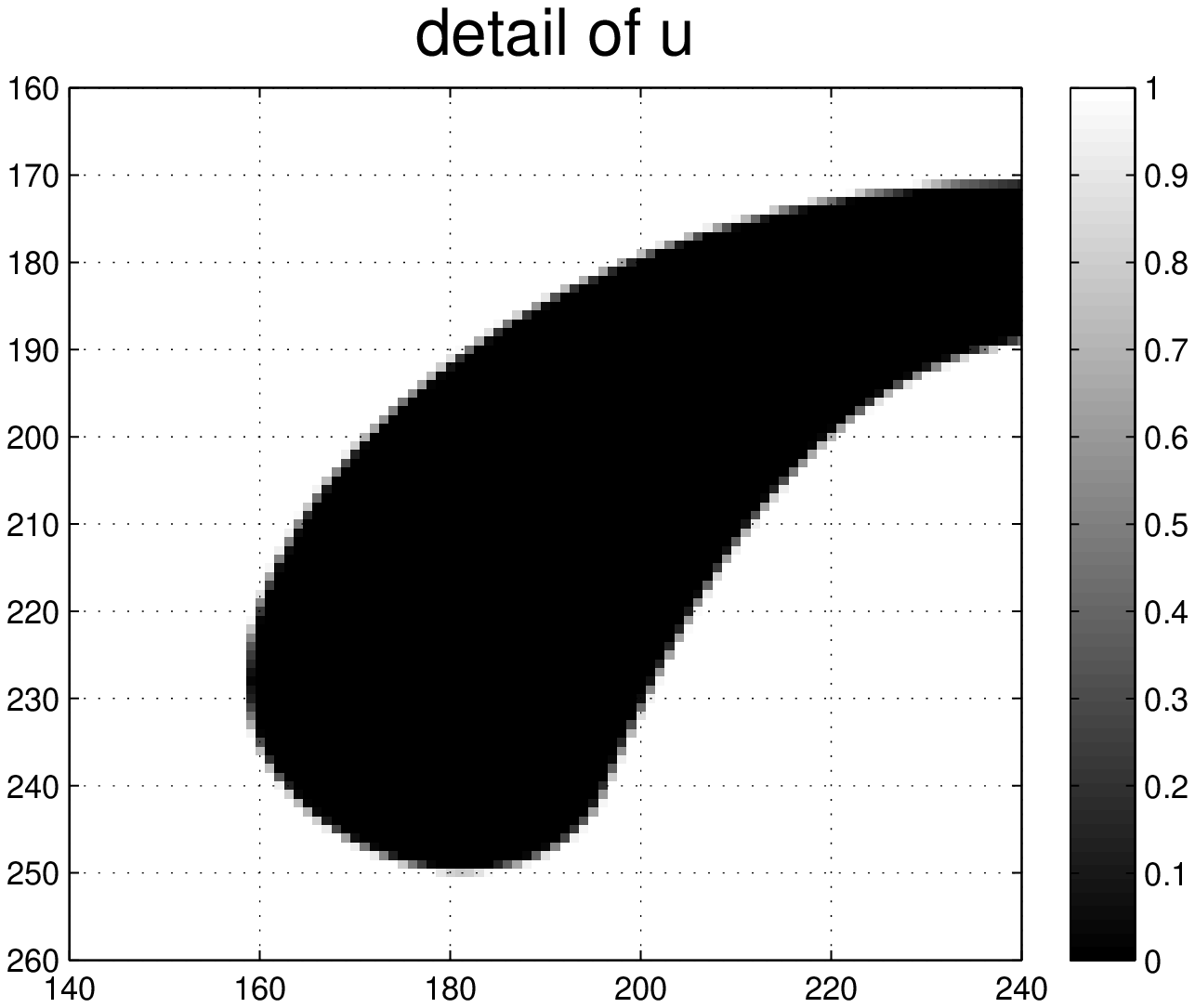}
\includegraphics[height=5cm,angle=0]{./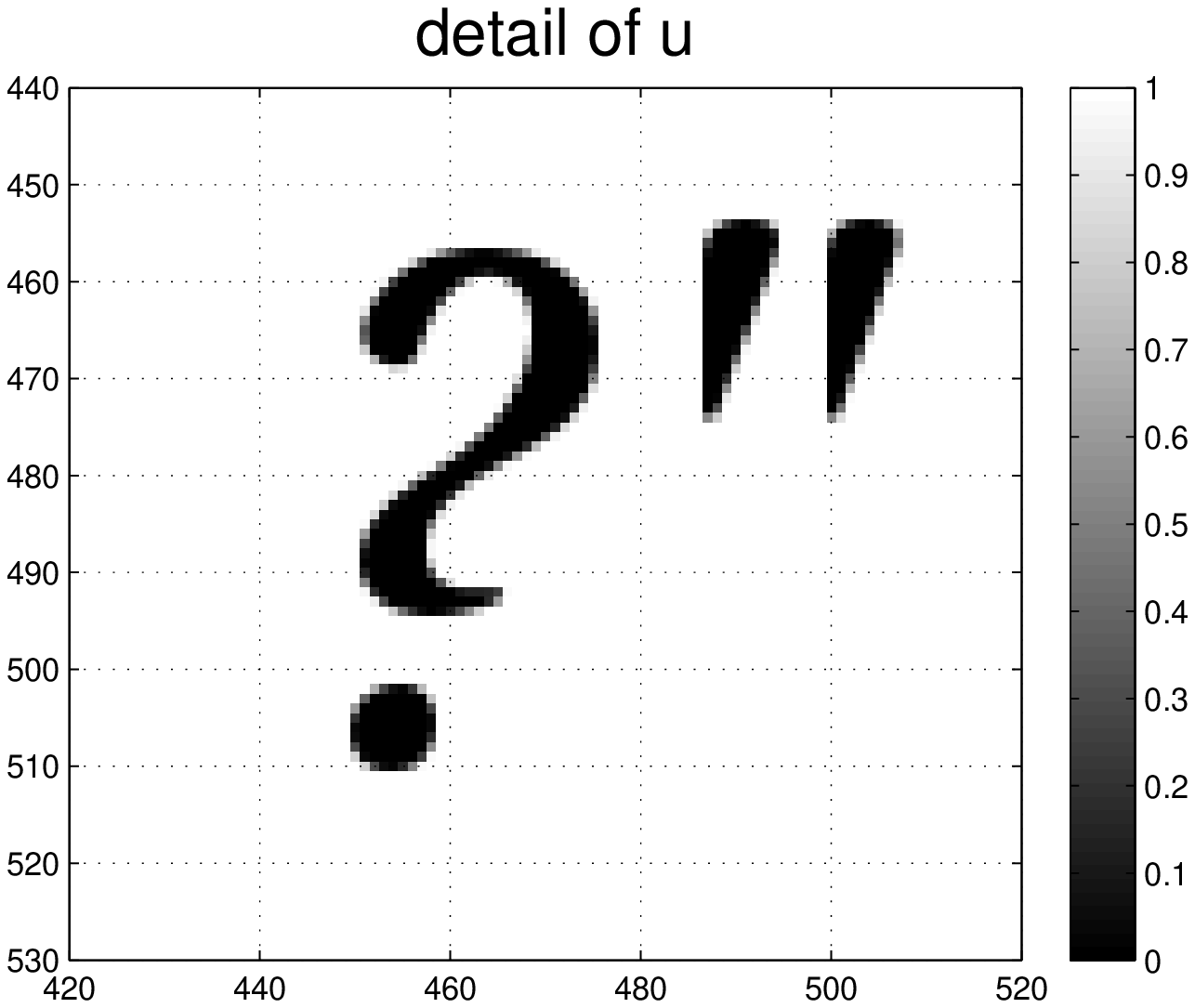}\\
\includegraphics[height=5cm,angle=0]{./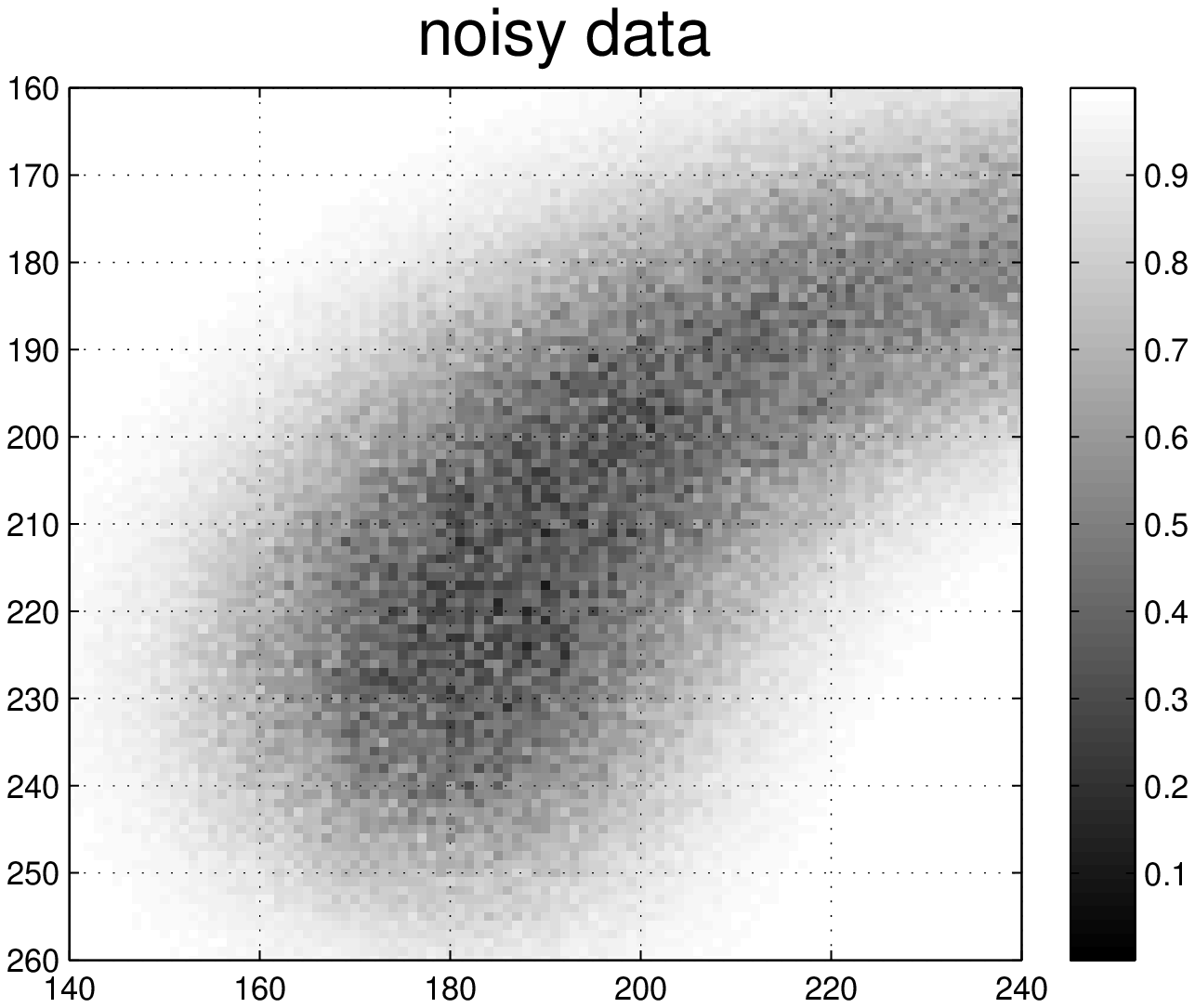}
\includegraphics[height=5cm,angle=0]{./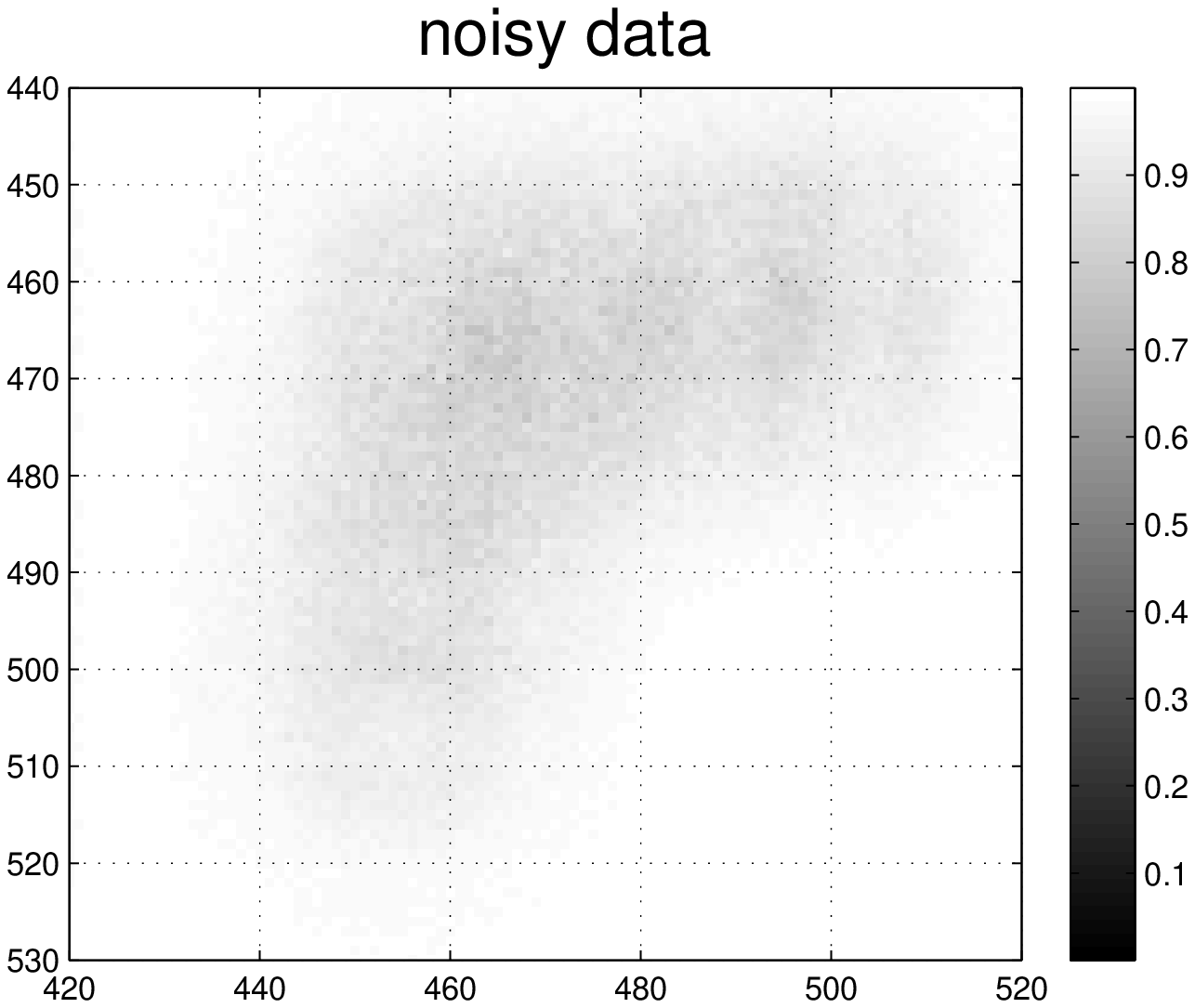}\\
\includegraphics[height=5cm,angle=0]{./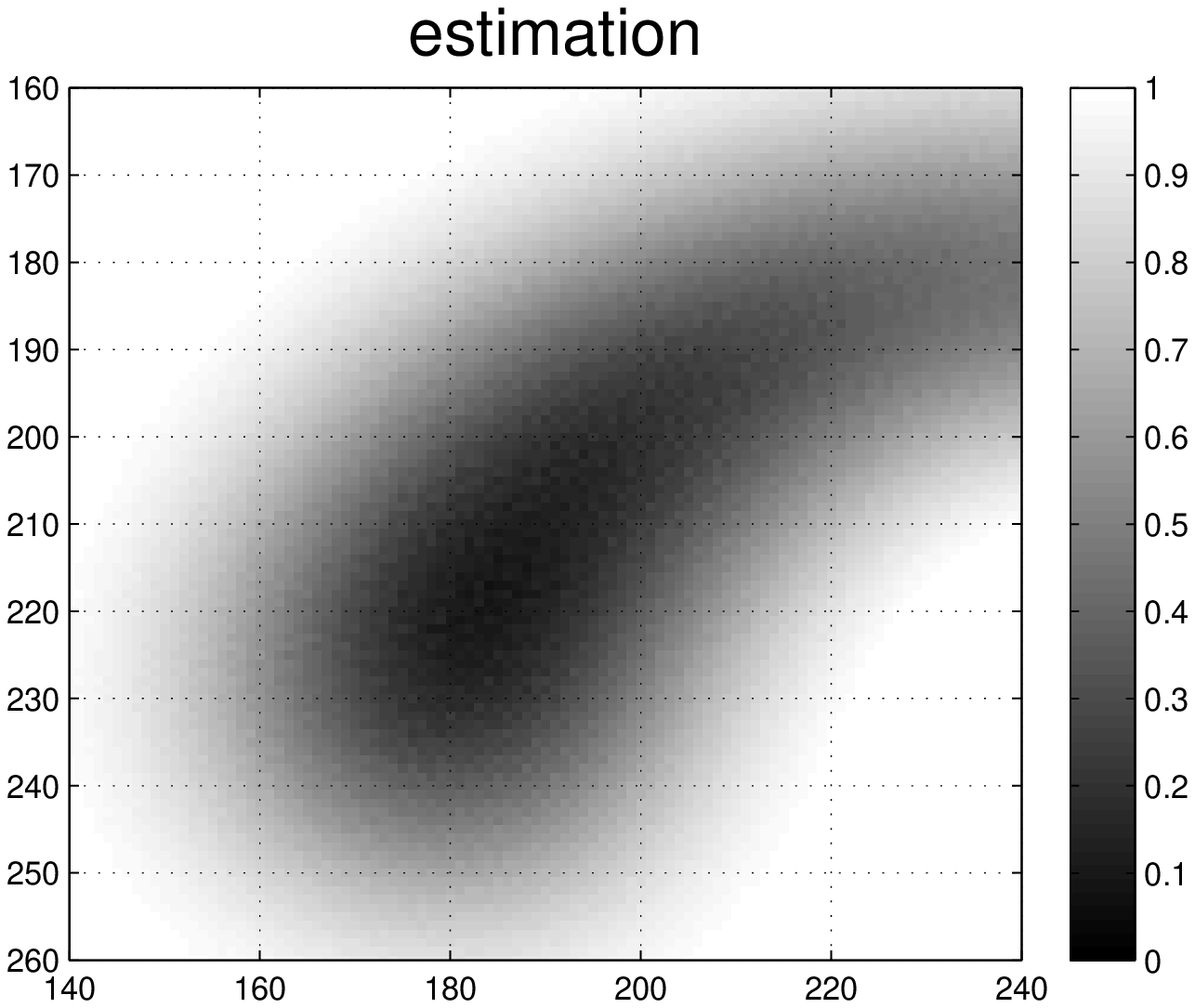}
\includegraphics[height=5cm,angle=0]{./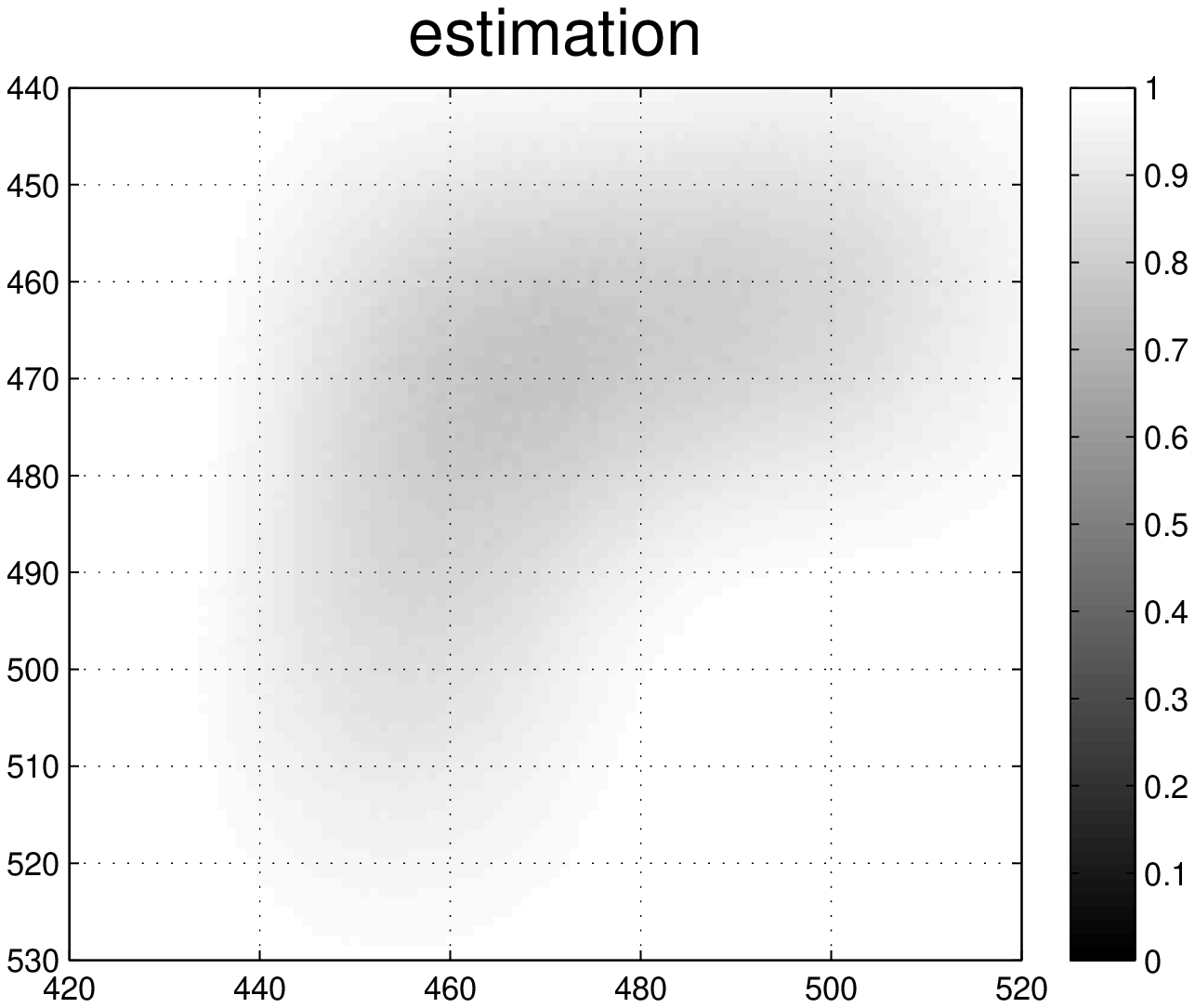} 
\end{center}
\caption{Details to the third row in Fig.~\ref{fig:6} ($T=3\,\tau$). The left 
         column shows the left top part of the big question mark and the right 
         column shows the small question mark and the small apostrophe.}
\label{fig:8}
\end{figure}

\end{document}